\documentclass[12pt]{article}
\usepackage{amssymb,amsthm,amsmath,amsfonts, fullpage, ytableau, shuffle,enumitem,multicol, tikz,xcolor}
\usepackage{mycommands}
\usepackage{hyperref}
\usepackage{authblk}

\usetikzlibrary{patterns,arrows,decorations.pathreplacing}

\theoremstyle{plain}
\newtheorem{thm}{Theorem}[section]

\newtheorem{lemma}[thm]{Lemma}
\newtheorem{fact}[thm]{Fact}

\theoremstyle{definition}
\newtheorem{remark}[thm]{Remark}
\newtheorem{definition}[thm]{Definition}
\newtheorem{notation}[thm]{Notation}
\newtheorem{definitions}[thm]{Definitions}

\DeclareMathOperator{\lis}{lis}
\DeclareMathOperator{\lds}{lds}
\DeclareMathOperator{\iRSK}{iRSK}
\DeclareMathOperator{\fus}{fus}
\DeclareMathOperator{\fjdt}{Kjdt}
\DeclareMathOperator{\rjdt}{Krjdt}

\renewcommand{\Vert}{\textrm{Vert}}
\newcommand{\Horiz}{\textrm{Horiz}}
\newcommand{\Rect}{\textrm{Rect}}
\newcommand{\Inc}{\textrm{Inc}}
\newcommand{\W}[2]{{\cW}_{#1}^{({#2})}}
\newcommand{\kW}[3]{{\cW_{{#1}}^{({#2},{#3})}}}
\newcommand{\Wcoor}[2]{\langle #1,#2\rangle}

\newcommand{\mystar}{{\scalebox{1.1}{\ensuremath \star}}}

\makeatletter
\newcommand*\bigcdot{\mathpalette\bigcdot@{1}}
\newcommand*\bigcdot@[2]{\mathbin{\vcenter{\hbox{\scalebox{#2}{$\m@th#1\bullet$}}}}}
\makeatother

\begin{document}
\pagestyle{plain}

\title{Strictly increasing and decreasing sequences in subintervals of words and a conjecture of Guo and Poznanovi\'c}
\date{}
\author[1]{Jonathan S. Bloom}
\author[2]{Dan Saracino}
\affil[1]{Department of Mathematics, Lafayette College, Easton PA, USA}
\affil[2]{Department of Mathematics, Colgate University, Hamilton NY, USA}
\maketitle

\begin{abstract}
\sloppy
We prove a conjecture of Guo and Poznanovi\'{c} concerning chains in certain 01-fillings of moon polyominoes. A key ingredient of our proof is a correspondence between words $w$ and pairs $(\cW(w), \cM(w))$ of increasing tableaux such that $\cM(w)$ determines the lengths of the longest strictly increasing and strictly decreasing sequences in every subinterval of $w$. We define this correspondence by using Thomas and Yong's K-infusion operator and then use it to obtain the bijections that prove the conjecture of Guo and Poznanovi\'{c}.  In constructing our bijections we introduce new variants of the RSK correspondence and Knuth equivalence. 
\end{abstract}

\section{Introduction}\label{sec:intro}
The motivation for the work presented in this paper was to prove a conjecture of Guo and Poznanovi\'{c} (\cite{GuoPoznanovik2020}, Conjecture 15) about chains in certain 01-fillings of moon polyominoes. We state the conjecture after recalling some required definitions, and we prove the conjecture in Section 5.

Let $S$ be the set of square boxes in the $xy$-plane determined by the points of $\bbZ\times\bbZ$.  A \emph{polyomino} is a finite subset of $S$.  A polyomino $M$ is a \emph{moon polyomino} if it contains every box situated between any two of its boxes that are in the same row or column, and for any two of its rows (or columns), one is a subset of the other, up to translation.  Adopting the terminology of Guo and Poznanovi\'{c}, we call a moon polyomino a \emph{stack polyomino} if its rows are left justified.   (We note that some authors use the phrase \emph{stack polyomino} to refer to a moon polyomino whose columns are bottom justified.) A \emph{Ferrers board} is a stack polyomino in which the lengths of the rows are weakly decreasing from top to bottom.

\begin{figure}
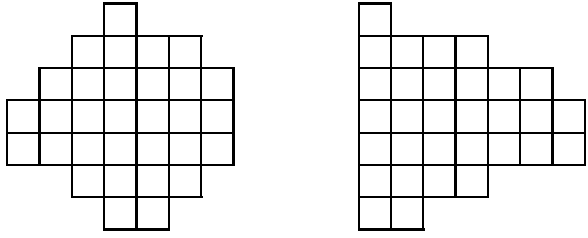

\ytableausetup{ boxsize=1em}
    \centering
        \ydiagram{3+1,2+4,1+6,7,7,2+4,3+2}\qquad\qquad \ydiagram{1,4,6,7,7,4,2}
    \caption{An example of a moon polyomino (left) and a stack polyomino (right).}
    \label{fig:moon and stack}
\end{figure}

By a \emph{01-filling} of a moon polyomino we mean an assignment of either a 0 or a 1 to each box of the polyomino.  We sometimes refer to the boxes containing 0's as \emph{empty}. A sequence of 1's in a filling of $M$ is a \emph{chain} in $M$ if the smallest rectangle containing all the 1's in the sequence is completely contained in $M$. A chain is called a \emph{ne chain} if each 1 in the chain is strictly above and strictly to the right of all the preceding 1's in the chain. In an analogous way, we define \emph{se chains}. The \emph{length} of a chain is the number of $1$'s in the chain. 

 To state our result precisely, we adopt the following notation of Guo and Poznanovi\'{c}. For any moon polyomino $M$ and positive integers $n,u,v$,  we let $N(M;n;ne=u,se=v)$ denote the number of 01-fillings of $M$ with exactly $n$ 1's such that each column contains at most one 1, and the lengths of the longest ne and se chains in $M$ are $u$ and $v$, respectively. Guo and Poznanovi\'{c} proved in \cite{GuoPoznanovik2020}
 that for every stack polyomino $M$ and all positive integers $n,u,v$, the quantity
$N(M;n; ne=u,se=v)$ depends only on the multiset of lengths of the rows of $M$.  They conjectured that their result is in fact true for all moon polyominoes, and we prove their conjecture as

\begin{thm}\label{thm:moon}
For every moon polyomino $M$ and all positive integers $n,u,v$, the quantity $N(M;n; ne=u,se=v)$ depends only on the multiset of lengths of the rows of $M$.
\end{thm}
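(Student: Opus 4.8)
The plan is to translate the statement into a fact about words and then prove that fact with the correspondence $w\mapsto(\cW(w),\cM(w))$. Fix a moon polyomino $M$ and index its rows from bottom to top. Reading the columns of $M$ from left to right and recording, for each column containing a $1$, the index of the row in which that $1$ sits, turns a $01$-filling with exactly $n$ ones and at most one $1$ per column into a word $w$ of length $n$ whose letters are constrained by the shape of $M$ (a letter arising from a given column must index a row meeting that column). Because each column holds at most one $1$ and $M$ is column-convex, an \emph{ne} chain of $M$ becomes a strictly increasing subsequence of $w$ whose supporting columns span a rectangle lying inside $M$, and an \emph{se} chain becomes a strictly decreasing subsequence with the analogous property. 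The nesting hypothesis on the rows forces the rectangles that can occur to be controlled by a nested family of column-intervals; hence the pair $(u,v)$ attached to the filling --- the lengths of its longest \emph{ne} and \emph{se} chains --- is read off from the values of $\lis$ and $\lds$ of $w$ restricted to the members of this interval family.

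Now I would invoke the correspondence advertised in the abstract: $\cM(w)$ records the longest strictly increasing and strictly decreasing subsequence lengths of \emph{every} factor of $w$, so the statistic $(u,v)$ above is a function of $\cM(w)$ together with the interval family of $M$. Consequently the theorem follows once we show that, for two moon polyominoes $M$ and $M'$ with the same multiset of row lengths, the words encoding the fillings of $M$ and those encoding the fillings of $M'$ can be matched by a bijection under which the restriction of $\cM(w)$ to the relevant intervals --- and hence the pair $(u,v)$ --- is preserved.

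To produce such a bijection I would first decompose the passage from $M$ to $M'$ into a sequence of elementary moves --- interchanging two rows that are consecutive in the length order, and horizontally sliding a convex block of rows --- chosen so that the moon polyominoes with a fixed row-length multiset are connected under them. (The case in which $M'$ is a stack polyomino already yields, via the result of Guo and Poznanovi\'c recalled above, the required dependence only on the multiset, so it suffices to mix within the moon polyominoes.) Each elementary move is then realised on the level of words by a local transformation, and this is where the new variant of Knuth equivalence and the map $\iRSK$ enter: one shows that each move corresponds to a Knuth-type relation, or a short composition of such relations, that leaves $\cM$ unchanged on the intervals that matter, using the compatibility of the K-infusion operator with these relations to certify the invariance. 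Carrying the resulting $(u,v)$-preserving bijections along the chain of moves gives $N(M;n;ne=u,se=v)=N(M';n;ne=u,se=v)$.

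The crux, and the reason classical tools do not suffice, is this last step. Ordinary jeu de taquin and Knuth equivalence govern $\lis$ and $\lds$ of a word as a whole, but the horizontal slides genuinely alter which rectangles sit inside the polyomino --- small examples show that a sequence of $1$'s forming an \emph{ne} chain in $M$ need not form one in a horizontally reshaped $M'$ --- so what is needed is a complete invariant for the simultaneous subinterval data of $\lis$ and $\lds$, together with a description of how it transforms under the reshaping operations. Proving that $\cM(w)$ is exactly such an invariant, and that it behaves controllably under the new Knuth moves by way of the K-infusion operator, is where essentially all the effort goes.
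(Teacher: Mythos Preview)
Your proposal is more of a wish-list than a proof, and it misidentifies the invariance that the bijection must achieve. You assert that the pair $(u,v)$ is a function of $\cM(w)$ together with the interval family, and then look for word-bijections ``that leave $\cM$ unchanged on the intervals that matter.'' But when you move a column of the polyomino, the family of relevant column-intervals \emph{shifts}, so a bijection preserving $\cM$ on the old intervals is useless; what you need is a bijection under which the CID data of the new word on the \emph{new} intervals matches the CID data of the old word on the \emph{old} intervals. Concretely, the paper's elementary move removes the leftmost column $c$ of $M$ and reinserts it just past the right end of the largest rectangle $R$ containing $c$; outside $R$ the filling is untouched, and inside $R$ the word $w$ (read from the nonempty rows and columns of $R$) is replaced by $\cC(w)$, where $\cC$ is built so that $w_2\cdots w_n\sim_{cid}\cC(w)_1\cdots\cC(w)_{n-1}$ --- a one-step \emph{shift} of CID data, not an invariance.

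A second gap is that you never mention RID data. Maximal rectangles of $M$ wider than $R$ intersect $R$ in a band of \emph{rows}, so matching their longest chains requires that the bijection preserve $\lis$ and $\lds$ on every value-interval of the word, i.e.\ $w\sim_{rid}\cC(w)$. This is exactly why the paper needs both halves of the correspondence: $\cW(w)$ governs RID and is preserved by $\cC$ (indeed $w\equiv_K\cC(w)$), while $\cM(w)$ governs CID and is shifted by $\cC$. The actual construction of $\cC$ with these two properties (Section~3.2, via $N_{n-1}$, vertical/horizontal $n$-strips, and the map $\varphi$) is the technical heart of the argument and is absent from your outline. Your suggested ``elementary moves'' (row swaps, horizontal block slides) and ``Knuth-type relations that leave $\cM$ unchanged'' do not lead there; and the roles you assign to iRSK and i-Knuth equivalence are off --- in the paper those tools are used to \emph{prove} that $\cM$ captures CID data (Theorem~\ref{thm:M implies cid}), not to build the shape-changing bijection.
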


Note that two moon polyominoes have the same multiset of lengths of rows if and only if they have the same multiset of heights of columns.

Before we set our result in historical context, we want to try to provide a rough idea of what the novelty of our approach to the problem was, and what the obstacles were that we had to overcome in order to implement our approach.  The basic idea of the proof of the result of Guo and Poznanovi\'{c} was to start with a stack polyomino $M$ and transform it to a stack polyomino $M'$ by removing the bottom row $r$ of $M$ and inserting a new row $r'$  (with the same length as $r$) directly above the top row of the largest rectangle in $M$ that contained $r$. (This idea had been used earlier by Rubey in \cite{Rubey2011}.)  Guo and Poznanovi\'{c} showed that this transformation preserves the quantity $N(M;n; ne=u,se=v)$ by constructing a bijection between fillings of $M$ and fillings of $M'$ such that, for every maximal rectangle $X$ in $M$, the greatest length of a ne (respectively, se) chain in $X$ is the same as the greatest length of a ne (respectively se) chain in the maximal rectangle $X'$ in $M'$ corresponding to $X$. (By ``corresponding to" we mean ``having the same dimensions as.")    By iterating this idea they transformed the stack polyomino into a Ferrers board (in French notation, with the longest rows on the bottom) while preserving $N(M;n; ne=u,se=v)$.  Since a Ferrers board in French notation is completely determined by the multiset of its row lengths, this proved their theorem.

A natural way to try to extend their result to moon polyominoes was to transform a moon polyomino $M$ to a stack polyomino $M'$ by successively removing the leftmost column $c$ and adding a new column $c'$ directly to the right of the largest rectangle 
$R$ in $M$ containing $c$. The challenge was to find a bijection between fillings of $M$ and fillings of $M'$ that preserves the maximum length of ne and se chains in corresponding maximal rectangles $X$ and $X'$ in $M$ and $M'$. We hoped, as in \cite{Rubey2011} and \cite{GuoPoznanovik2020}, to do so by changing a given filling of $M$ only by using the filling of $R$ to create a new filling of the corresponding rectangle $R'$ in $M$.

The nonempty rows and columns of the filling of $R$ give us the matrix representation of a word $w$, and we wanted to find a word $w'$ with the same length $n$ and the same set of values as $w$ that would similarly provide us the nonempty rows and columns of the new filling of $R'$. Consider, for example, what is required of $w'$ in the case of a maximal rectangle $X$ in $M$ that is narrower than $R$, and a filling of $M$ such that column $c$ is not empty. We want to place the values of $w'$ in the columns of $R'$ that were not empty in $R$, and $c'$. For an illustration of such an $R, X$ and $c$, see Figure 2, where, in $M$, $R$ consists of the boxes colored dark blue, light blue, or violet, $X$ consists of the boxes colored red or violet, and $c$ consists of the boxes colored dark blue.

		
		

\begin{figure}[h!]
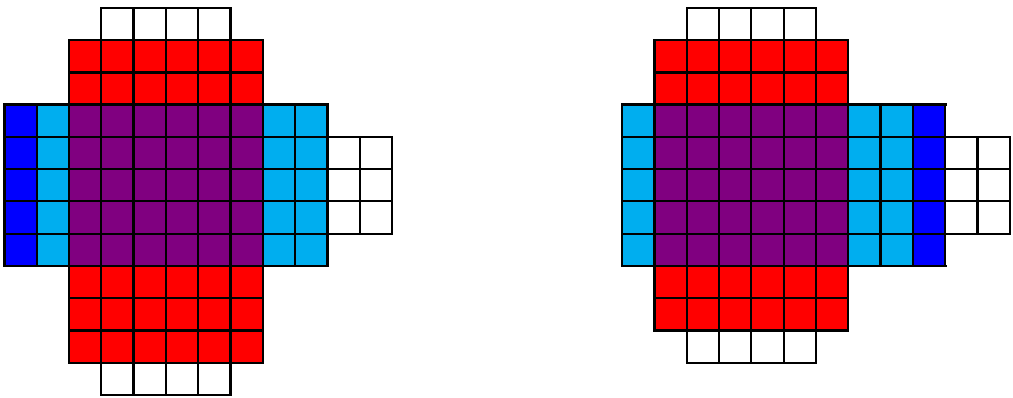

\ytableausetup{ boxsize=1em}
    \centering
    \begin{ytableau}
         \none & \none &\none&&&&\\
         \none &\none &*(red) &*(red) &*(red)&*(red)&*(red)&*(red)\\
         \none &\none &*(red) &*(red) &*(red)&*(red)&*(red)&*(red)\\
        *(blue)&*(cyan) &*(violet) &*(violet) &*(violet) &*(violet)&*(violet)&*(violet)&*(cyan)&*(cyan)\\
        *(blue)& *(cyan)&*(violet) &*(violet) &*(violet) &*(violet)&*(violet)&*(violet)&*(cyan)&*(cyan)&& \\
        *(blue)& *(cyan)&*(violet) &*(violet) &*(violet) &*(violet)&*(violet)&*(violet)&*(cyan)&*(cyan)&& \\
        *(blue)& *(cyan)&*(violet) &*(violet) &*(violet) &*(violet)&*(violet)&*(violet)&*(cyan)&*(cyan)&& \\
        *(blue)&*(cyan) &*(violet) &*(violet) &*(violet) &*(violet)&*(violet)&*(violet)&*(cyan)&*(cyan) \\
        \none&\none &*(red)&*(red)&*(red)&*(red)&*(red)&*(red)\\
        \none&\none&*(red)&*(red)&*(red)&*(red)&*(red)&*(red)\\
        \none&\none&*(red)&*(red)&*(red)&*(red)&*(red)&*(red)\\
        \none&\none&\none&&&&
    \end{ytableau}
    \qquad\qquad\qquad
\begin{ytableau}
         \none & \none &\none&&&&\\
         \none &\none &*(red) &*(red) &*(red)&*(red)&*(red)&*(red)\\
         \none &\none & *(red)& *(red)&*(red)&*(red)&*(red)&*(red)\\
        \none&*(cyan) &*(violet) &*(violet) &*(violet) &*(violet)&*(violet)&*(violet)&*(cyan)&*(cyan)&*(blue)\\
        \none& *(cyan)&*(violet) &*(violet) &*(violet) &*(violet)&*(violet)&*(violet)&*(cyan)&*(cyan)&*(blue)&& \\
        \none& *(cyan)&*(violet) &*(violet) &*(violet) &*(violet)&*(violet)&*(violet)&*(cyan)&*(cyan)&*(blue)&& \\
        \none& *(cyan)&*(violet) &*(violet) &*(violet) &*(violet)&*(violet)&*(violet)&*(cyan)&*(cyan)&*(blue)&& \\
        \none&*(cyan) &*(violet) &*(violet) &*(violet) &*(violet)&*(violet)&*(violet)&*(cyan)&*(cyan)&*(blue) \\
        \none&\none &*(red)&*(red)&*(red)&*(red)&*(red)&*(red)\\
        \none&\none&*(red)&*(red)&*(red)&*(red)&*(red)&*(red)\\
        \none&\none&\none&&&&
    \end{ytableau}
    \caption{Moon polyominoes $M$ (left) and $M'$ (right)}
    \label{fig:M and M'}
\end{figure}
Any ne chain of maximal length in X will consist of initial and terminal segments outside $R$ and a middle segment in some interval $J$ of the columns of $R$, and the middle segment will have maximal length among the ne chains in the interval $J$ of the columns of $R$. Our motivating observation was that what this requires of $w'$ is that, for any subinterval $[i,j]$ of $[2,n]$, the maximum length of a ne chain should be the same in $w'_{i-1}\cdots w'_{j-1}$ as it is in $w_i\cdots w_j$.  Of course, we require the same for se chains.  And when we try to deal with maximal rectangles that are wider than $R$, it turns out that, in terms of the matrix representations of $w$ and $w'$, we require that the maximum length of a ne (or se) chain in any subinterval of the rows should be the same for $w'$ as it is for $w$.

We need some terminology to deal with these ideas efficiently.  Let $\fW_n$ be the set of all words of length $n$ whose letters are positive integers.    For any word $u$ define $\lis(u)$ (respectively, $\lds(u)$) to be the length of a longest strictly increasing (respectively, decreasing) subsequence in $u$.  We say two words $u,w\in \fW_n$ have the same \emph{CID data} and write $u\sim_{cid} w$ provided that for any choice of $1\leq i\leq j\leq n$ we have
$$\lis(u_i\cdots u_j) = \lis(w_i\cdots w_j)\myand \lds(u_i\cdots u_j) = \lds(w_i\cdots w_j).$$
(We note that ``CID" stands for ``column increasing decreasing", and ``column" refers to the columns in the matrix representation of a word.) If we only know the first (respectively, second) equality above holds we write $u\sim_{ci} w$ (respectively, $u\sim_{cd} w$). We say $u$ and $w$ have the same \emph{RID data} and write $u\sim_{rid} w$ if, for every integer interval $[a,b]$, we have $\lis(u')=\lis(w')$ and $\lds(u')=\lds(w')$, where $u'$ and $w'$ are the words obtained from $u$ and $w$ by deleting all integers not in $[a,b]$. 

Our proof of Theorem~\ref{thm:moon} rests on developing a correspondence between words and pairs of certain increasing tableaux, such that if $w\mapsto (\cW(w),\cM(w))$ then $\cW(u) = \cW(w)$ implies $u\sim_{rid} w$ and $\cM(u) = \cM(w)$ implies $u\sim_{cid} w$.  We note that the known ways of associating pairs of tableaux to words do not achieve what $\cM(w)$ achieves. For example, under the RSK correspondence the words $u=32311$ and $w=32211$ have the same recording tableau \ytableausetup{boxsize=1.2em}
$$\ytableaushort{13,25,4}$$
but we see that $u\not\sim_{cid} w$ since the 2nd and 3rd columns of $u$ have longest increasing sequence length 2, while in $w$ the same columns have longest increasing sequence length 1. Under the correspondence provided by Hecke insertion (which is what Guo and Poznanovi\'{c} used to define their bijection) the words $u = 13221, w = 23231$  both have the Hecke recording tableau
\ytableausetup{boxsize=1.7em}
$$\begin{ytableau}
1 & 2,4\\
3 & 5
\end{ytableau}$$
but $u\not\sim_{cid} w$ since the 3rd and 4th columns of $u$ have longest increasing sequence length 1, while in $w$ the same columns have longest increasing sequence length 2.

We will obtain our new correspondence $w\mapsto (\cW(w),\cM(w))$ that captures both RID and CID data by using a new technique: the K-infusion operator introduced by Thomas and Yong in \cite{Thomas2007}. We establish our correspondence in Section 2, where we also prove (Theorem~\ref{thm:W implies rid}) that $\cW(u) = \cW(w)$ implies $u\sim_{rid} w$. The proof that
$\cM(u) = \cM(w)$ implies $u\sim_{cid} w$ requires new ideas and considerably more effort, and we defer it to Section 4.  In particular, we need to introduce and work with a new variant of RSK, which we call iRSK, and a new variant of Knuth equivalence, which we call i-Knuth equivalence, to prove that $\cM(w)$ has the desired property.

In Section 3 we use the correspondence $w\mapsto (\cW(w),\cM(w))$ to construct two bijections. One of these 
(in Theorem~\ref{thm:col shift}) enables us to prove Theorem~\ref{thm:moon}, by providing a bijection from $\fW_n$ to $\fW_n$ that preserves RID data but ``shifts CID data leftward," as was required in the above illustration.
The other bijection (in Theorem~\ref{thm:row shift}) enables us to prove the result of Guo and Poznanovi\'{c} for stack polyominoes in a new way, by providing a bijection from $\fW_n$ to $\fW_n$ that preserves CID data but ``shifts RID data downward." Theorems~\ref{thm:row shift} and \ref{thm:col shift}, taken together, provide a unified proof of Theorem~\ref{thm:moon} independent of the result of Guo and Poznanovi\'{c}.

Finally, in Section 5 we put everything together to prove the conjecture of Guo and Poznanovi\'{c}.

Before we turn to the construction of $\cW(w)$ and $\cM(w)$, we provide the following (incomplete) account of previous results, to set our results in context.

A number of authors have contributed results about the enumeration of fillings of polyominoes with restrictions on the maximum lengths of chains of various types.  Specializing to 01-fillings with at most one 1 in each row and at most one 1 in each column (which can be dealt with by considering permutations), Krattenthaler~\cite{Krattenthaler2006} proved the following: For any Ferrers board $F$ and any positive integers $n,s,t$, the number of such fillings of $F$ with exactly $n$ 1's and longest ne and se chains of lengths $s$ and $t$, respectively, equals the number of such fillings of $F$ with exactly $n$ 1's and longest ne and se chains of lengths $t$ and $s$, respectively. He then showed that this result implies the main results of \cite{Chen2005} about crossings and nestings in matchings and set partitions, and went on to obtain extensions of his result for general 01-fillings and fillings with nonnegative integer entries.  His motivation for proving these extensions was the result of Jonsson~\cite{JONSSON2005} and Jonsson and Welker~\cite{JonssonWelker2006}, which states that the total number of 01-fillings of a stack polyomino $M$ with exactly $n$ 1's and longest ne chains of length $s$ depends only on the multiset of heights of the columns of $M$.  While Jonsson used a complicated induction argument, and Jonsson and Welker used commutative algebra, Krattenthaler sought to use the growth diagrams of Fomin and variants of the RSK correspondence to obtain their result.  Although he did not completely succeed in doing so, he paved the way for the work of Rubey~\cite{Rubey2011}. 

By supplementing Krattenthaler's approach with the new idea of promotion of fillings of rectangular shapes, Rubey was able to reprove the result of Jonsson and Welker, and indeed to prove its generalization to moon polyominoes, which had been conjectured by Jonsson. Rubey was also able to prove many additional results involving weakly ne and se chains.

In~\cite{Krattenthaler2006}, Problem 2, Krattenthaler asked whether his result about interchanging the lengths of the longest ne and se chains would hold in the context of all 01-fillings, instead of just those fillings having at most one 1 in each row and column.  A counterexample was provided by de Mier in \cite{Mier2006}, and it follows that, in the context of all 01-fillings, the result of Jonsson and Welker for stack polyominoes cannot be strengthened so as to fix the lengths of the longest ne chains \emph{and} the lengths of the longest se chains.

The result of Guo and Poznanovi\'{c} in \cite{GuoPoznanovik2020} showed, however, that the strengthened version of Jonsson and Welker's result for stack polyominoes does hold in the context of 01-fillings with at most one 1 in each column. And Conjecture 15 in  \cite{GuoPoznanovik2020} led to the writing of our paper.

\section{K-theoretic tools}\label{sec:K-tools}
\subsection{K-Knuth equivalence}	
\begin{definition}
	We say two words $u$ and $w$ are \emph{K-Knuth equivalent} and write $u\equiv_K w$ provided that one can be obtained from the other by a finite number of the following moves:
	\begin{align*}
	\cdots bca\cdots \qquad &\leftrightarrow \quad \cdots bac\cdots \qquad (a< b < c)\\
	\cdots acb\cdots \qquad &\leftrightarrow \quad \cdots cab\cdots\qquad (a < b < c)\\
	\cdots a\cdots \qquad &\leftrightarrow \quad \cdots aa\cdots\\
	\cdots aba\cdots \qquad &\leftrightarrow \quad \cdots bab\cdots.\\
	\end{align*}
\end{definition}

 We will need the following two results about K-Knuth equivalence.

 \begin{fact}\label{fact:Knuth on subintervals of rows} (\cite{Buch2013}, Lemma 5.5)
 If $u$ and $w$ are K-Knuth equivalent words and $[a,b]$ is an integer interval, then the words obtained from $u$ and $v$ by deleting all integers not in $[a,b]$ are K-Knuth equivalent.
 \end{fact}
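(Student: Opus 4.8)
The plan is to reduce to a single elementary $K$-Knuth move and then check, move by move, what deletion does. Two structural observations make the reduction clean. First, $K$-Knuth equivalence is a two-sided congruence: if $x\equiv_K y$ then $vxv'\equiv_K vyv'$ for all words $v,v'$, since each of the four defining moves is stated "in context'' (the $\cdots$ on either side may absorb $v$ and $v'$). Second, if $u=u_0,u_1,\dots,u_k=w$ is a chain of single moves witnessing $u\equiv_K w$ and $D=D_{[\alpha,\beta]}$ denotes the operator deleting all integers outside the interval (I write the interval as $[\alpha,\beta]$ to avoid clashing with the letters $a,b,c$ in the moves), then since $\equiv_K$ is transitive it suffices to prove $D(u_i)\equiv_K D(u_{i+1})$ for each $i$. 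So I may assume $w$ is obtained from $u$ by one move. Writing $u=p\,m\,s$ and $w=p\,m'\,s$, where $m\to m'$ is the localized move (a factor of length $1$, $2$, or $3$) and $p,s$ are the unchanged prefix and suffix, we get $D(u)=D(p)\,D(m)\,D(s)$ and $D(w)=D(p)\,D(m')\,D(s)$, so by the congruence property it is enough to show $D(m)\equiv_K D(m')$.

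Now the case analysis. For the Knuth-type moves $bca\leftrightarrow bac$ and $acb\leftrightarrow cab$ with $a<b<c$, the window involves three distinct values. If all three survive deletion, $D(m)\to D(m')$ is again an instance of the same move; if zero or exactly one survives, $D(m)$ and $D(m')$ are literally equal. If exactly two survive, the single genuinely used property of an integer interval enters: $[\alpha,\beta]$ is convex, i.e.\ if $a,c\in[\alpha,\beta]$ and $a<b<c$ then $b\in[\alpha,\beta]$; hence the surviving pair cannot be $\{a,c\}$, and for each of the two remaining pairs one checks directly that $D(m)$ and $D(m')$ are the same word. For the move $a\leftrightarrow aa$: if $a\in[\alpha,\beta]$ the images are $a$ and $aa$, related by that same move; otherwise both images are empty. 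For the move $aba\leftrightarrow bab$ (trivial if $a=b$, so assume $a\neq b$): if both values survive we again get that move; if only $a$ survives the images are $aa$ and $a$; if only $b$ survives they are $b$ and $bb$; if neither survives both are empty. In every subcase $D(m)\equiv_K D(m')$, using at most the $a\leftrightarrow aa$ relation, and the Fact follows.

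I do not anticipate a real obstacle: the argument is a finite, elementary verification once the reduction to a single move is in place. The only spots requiring attention are (i) the two-surviving-letters subcase of the Knuth moves, where one must notice that deleting the median of $\{a,b,c\}$ while keeping the extremes is impossible for an interval — this is precisely what prevents us from needing the (false) relation $ac\equiv_K ca$; and (ii) keeping track of multiplicities in the $aba\leftrightarrow bab$ window so that partial deletions are correctly recognized as instances of $a\leftrightarrow aa$.
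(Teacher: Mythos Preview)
Your argument is correct. The reduction to a single elementary move via the congruence property is the right framework, and your case analysis is complete and accurate; in particular, you correctly identify that convexity of the interval $[\alpha,\beta]$ is exactly what rules out the problematic $\{a,c\}$-survives subcase in the two Knuth-type moves, and you correctly reduce the partial-survival subcases of $aba\leftrightarrow bab$ to instances of the idempotent move $a\leftrightarrow aa$.

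As for comparison with the paper: there is nothing to compare against. The paper does not prove this statement; it records it as a \emph{Fact} with a citation to \cite{Buch2013}, Lemma~5.5, and moves on. Your write-up is essentially the standard proof one would give (and presumably the one in \cite{Buch2013}): reduce to one move, then do the finite check. So you have supplied what the paper elected to outsource.
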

 
 \begin{fact}\label{fact:longest inc and dec} (\cite{Gaetz2016}, Proposition 37)
 If two words are K-Knuth equivalent, then
 the lengths of the longest strictly decreasing subsequences in these words are the same, and the lengths of the longest strictly increasing subsequences are the same.
\end{fact}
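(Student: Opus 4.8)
The plan is to reduce to a single application of each of the four defining moves, and then, since every move is bidirectional, to show only that no move can \emph{decrease} $\lis$, and likewise that no move can decrease $\lds$; invariance follows. Writing a move as the replacement of a short ``window'' $v$ inside $w = \alpha\, v\, \beta$ by another short word $v'$, so that $w' = \alpha\, v'\, \beta$, I would fix a strictly increasing (or strictly decreasing) subsequence $S$ of $w$ and transport it to a subsequence of the same length in $w'$.

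Because the window occupies consecutive positions, $S$ splits uniquely as $S = S_\alpha\, S_{mid}\, S_\beta$, its restrictions to the positions of $\alpha$, $v$, and $\beta$; monotonicity forces each piece to be monotone of the same type, and forces $\max S_\alpha < \min S_\beta$ when $S_{mid} = \varnothing$ (in the increasing case, reversed for the decreasing case). So it is enough to produce a monotone subsequence $S'_{mid}$ of $v'$ with $|S'_{mid}| = |S_{mid}|$ that still fits strictly between $\max S_\alpha$ and $\min S_\beta$; then $S_\alpha\, S'_{mid}\, S_\beta$ is a legal subsequence of $w'$ of length $|S|$. For the idempotent move $a \leftrightarrow aa$ this is immediate, since a strictly monotone $S_{mid}$ uses at most one copy of the repeated letter. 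For the other three moves the window has three letters, and the hypotheses ($a < b < c$, or $a \neq b$ in the move $aba \leftrightarrow bab$) guarantee that every strictly monotone subsequence of the window has length at most $2$; so I would run through the finitely many choices of $S_{mid}$ of length $0$, $1$, $2$. The empty and singleton choices transport trivially, because the new window contains the same set of letters.

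The only case with real content is a length-$2$ window segment. Here the one subtlety is that the two values used by $S_{mid}$ need not appear in the new window in the order that an increasing (or decreasing) subsequence requires --- for instance, when transporting an increasing pair $(a,c)$ out of the window $bac$ into the window $bca$ --- and the fix is simply to choose a \emph{different} admissible length-$2$ subsequence of the new window (here $(b,c)$) and to check, using $a < b < c$, that it still lies strictly between $\max S_\alpha$ and $\min S_\beta$. Carrying this out for $bca \leftrightarrow bac$, $acb \leftrightarrow cab$, and $aba \leftrightarrow bab$, in both directions and for both $\lis$ and $\lds$, finishes the argument; the only real work --- and the only place where I expect care to be needed --- is this bookkeeping of the compatibility inequalities in the length-$2$ subcases. (Alternatively one could note that the two classical Knuth moves preserve the Schensted insertion tableau, so invariance of $\lis$ and $\lds$ under those two moves is standard via Greene's theorem, and then only the two genuinely $K$-theoretic moves $a \leftrightarrow aa$ and $aba \leftrightarrow bab$ need to be handled by hand; but the uniform elementary check above sidesteps the weak-versus-strict conventions that arise for words with repeated letters.)
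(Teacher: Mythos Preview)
The paper does not supply its own proof of this statement; it is quoted as a Fact from an external reference without argument, so there is no in-paper proof to compare against. Your elementary case-check is correct: the reduction to a single move is valid, the observation that any strictly monotone subsequence uses at most two positions from a three-letter window (and at most one from the window $aa$) is correct, and the key maneuver you isolate --- replacing a length-$2$ segment such as $(a,c)$ by a different admissible pair such as $(b,c)$ when the original pair fails to reappear in the required order in the new window --- does go through in every subcase, because the strict chain $a<b<c$ always lets the substituted letter inherit the compatibility bounds with $S_\alpha$ and $S_\beta$.

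One small remark on your parenthetical alternative: the two three-letter moves in the definition of K-Knuth equivalence carry \emph{strict} inequalities $a<b<c$, not the classical Knuth conditions $a<b\le c$ and $a\le b<c$, so invoking the standard ``same $P$-tableau'' / Greene's theorem argument for those two moves would itself require a word of justification for words with repeated letters; your direct check sidesteps this cleanly, as you note.
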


\subsection{K-Infusion}\label{subsec:k-infusion}
\ytableausetup{boxsize=1.2em}

Throughout this subsection let  $\lambda\subseteq \mu$ be partitions, which we view as Ferrers boards. We define an \emph{inner corner} of the skew shape $\mu/\lambda$ to be a maximally southeast box in $\lambda$.  We shall also need the notion of an outer corner.  To define this, let $\nu$ be any rectangular partition such that $\nu_1>\mu_1$ and the length of $\nu$ is greater than the length of $\mu$.  Then an \emph{outer corner} of $\mu/ \lambda$ is a maximally northwest box in $\nu/ \mu$.

A filling $T$ of the skew shape $\mu/ \lambda$ with positive integers is called a \emph{semistandard Young tableau} if the entries are weakly increasing along rows and strictly increasing along columns, from top to bottom.  A filling $T$ is called an \emph{increasing tableau} if the entries are strictly increasing along both rows and columns.  We write  $\sh(T) = \mu/\lambda$ and refer to this as the \emph{shape} of $T$.  In the case $\lambda = \emptyset$ we say $T$ has \emph{straight} shape.  We denote by $\Inc(\mu/\lambda)$ the set of all increasing tableaux of shape $\mu/\lambda$.  We denote by $Inc^{st}(\mu/\lambda)$ the set of elements of $Inc(\mu/ \lambda)$ whose set of entries constitute an integer interval $[1,n]$. 
An element of $Inc^{st}(\mu/\lambda)$ is called a \emph{standard Young tableau} if $\lambda = \emptyset$ and each entry appears exactly once.

A specific increasing tableau of interest is the \emph{minimal tableau of shape $\lambda$}, which is the increasing tableau of shape $\lambda$ where each box is filled with the smallest positive integer possible.   For each $n>0$ we define the \emph{staircase shape} to be the partition $\sigma_n = (n,n-1,\ldots,1)$ and define $M_n$ to be the minimal tableau of shape $\sigma_n$.  For example, $M_4$ is the increasing tableau
$$\begin{ytableau}
1 & 2& 3&4\\
2& 3&4\\
3&4\\	
4
\end{ytableau}\ .$$

Next we recall the idea of $K$-theoretic jeu de taquin, which was first introduced and studied in \cite{Thomas2007}.  For any increasing tableau $T$ of shape $\mu/\lambda$ let $B$ be a set of inner corners.  We then define the increasing tableau $\fjdt_B(T)$ to be the tableau given by the following inductive procedure.  First place a $\bullet$ in each of the inner corners in $B$.  Now, identify all the $\bullet$'s that are directly above or to the left of a box containing a 1.  Replace these dots with 1's and replace those 1's, that were below or to the right of a dot, with a dot.  Next, repeat this process for all dots directly above or to the left of a 2, then  for all dots directly above or to the left of a 3, and so on, until no dot has an entry below or to its right. Then delete the dots. For example, consider the following increasing tableau 
$$\begin{ytableau}
\none &\none &\none &\none[\bullet] & 2 & 3\\
\none &\none &1&2& 3 & 6\\
\none &\none & 2 & 4& 5\\
	\none[\bullet] &1 & 3& 6\\
	1&3&5\\
\end{ytableau}$$
where the selected inner corners are denoted with $\bullet$'s.
Then the steps of the $K$-theoretic jeu de taquin procedure are
$$\begin{ytableau}
\none &\none &\none &\bullet & 2 & 3\\
\none &\none &1&2& 3 & 6\\
\none &\none & 2 & 4& 5\\
1 &\bullet & 3& 6\\
\bullet & 3 &5\\
\end{ytableau}
\begin{ytableau}
\none &\none &\none &2 & \bullet & 3\\
\none &\none &1& \bullet & 3 & 6\\
\none &\none & 2 & 4& 5\\
1 &\bullet & 3& 6\\
\bullet & 3 &5\\
\end{ytableau}
\begin{ytableau}
\none &\none &\none &2 & 3 & \bullet\\
\none &\none &1& 3 & \bullet & 6\\
\none &\none & 2 & 4& 5\\
1 &3 & \bullet& 6\\
3 & \bullet &5\\
\end{ytableau}
\begin{ytableau}
\none &\none &\none &2 & 3 & \bullet\\
\none &\none &1& 3 & 5 & 6\\
\none &\none & 2 & 4& \bullet\\
1 &3 & 5& 6\\
3 & 5 &\bullet\\
\end{ytableau}
\begin{ytableau}
\none &\none &\none &2 & 3 & 6\\
\none &\none &1& 3 & 5 & \none[\bullet]\\
\none &\none & 2 & 4& \none[\bullet]\\
1 &3 & 5& 6\\
3 & 5 &\none[\bullet]\\
\end{ytableau}
$$
and $\fjdt_B(T)$ is the increasing tableau on the right, minus the dots.

Naturally, we also have the notion of reverse $K$-theoretic jeu de taquin. Again we start with an increasing tableau $S$ of shape $\mu/\lambda$ with maximal entry $k$.  This time we take $C$ to be a set of outer corners for $S$ and define  $\rjdt_C(S)$ as follows.  First, place a dot $\bullet$ in each outer corner from $C$ and find all the dots that are directly below or to the right of a box containing a $k$.  Replace these dots with $k$'s and replace those $k$'s with dots.  Repeat this procedure for the numbers $k-1,k-2, \ldots, 1$, and then delete the dots.  For example, let $S$ be the increasing tableau on the far right in the above example and let $C$ be the set of outer corners marked with a dot.  The construction of $\rjdt_C(S)$ is now given by reading the sequence of tableaux in the above example from right to left, so that $\rjdt_C(S)$ is the increasing tableau with which we began.  

It is not difficult to show from the definitions that both $K$-theoretic jeu de taquin and its reverse yield increasing tableaux.   Moreover, if we start with an increasing tableau $T$ and a set of inner corners $B$ then 
$$T = \rjdt_C(S)$$
where $S = \fjdt_B(T)$ and $C$ is the set of outer corners given by the final location of the dots in the construction of $S$.

To prepare for the definition of the K-infusion operator, introduced in \cite{Thomas2007}, we define $\fjdt_T(U)$,  where $\lambda\subseteq\mu$, $T\in \Inc(\lambda)$, and $U \in \Inc(\mu/\lambda)$. Let $k$ be the largest entry in $T$ and let $B_i$ be the set of boxes in $T$ that contain the entry $i$.  
Setting $U_k: = U$ we recursively define 
$$U_{i-1} := \fjdt_{B_i}(U_{i})$$
for $i = k,k-1,\ldots, 1$.  (Note that $B_i$ is a collection of inner corners for $U_i$.)  Finally, we set 
$$\fjdt_T(U): = U_0.$$
For example, take the increasing tableaux

$$T =\begin{ytableau}
	1 & 2& 4\\
	2 & 3\\
	4
\end{ytableau}
\qquad 
U = \begin{ytableau}
{}	&&& 1 &2\\
	 & &1&3&4\\
	 &2&3
\end{ytableau}\ .$$
Then $U_3$ is constructed as
$$\begin{ytableau}
	\none&\none&\none[\bullet]& 1 &2\\
	\none &\none &1&3&4\\
	\none[\bullet] &2&3
\end{ytableau}
\qquad 
\begin{ytableau}
	\none&\none&1& \bullet &2\\
	\none &\none &\bullet&3&4\\
	\bullet &2&3
\end{ytableau}
\qquad 
\begin{ytableau}
	\none&\none&1& 2 &\bullet\\
	\none &\none &\bullet&3&4\\
	2 &\bullet&3
\end{ytableau}
\qquad 
\begin{ytableau}
	\none&\none&1& 2 &\bullet\\
	\none &\none &3&\bullet&4\\
	2 &3&\bullet
\end{ytableau}
\qquad 
\begin{ytableau}
	\none&\none&1& 2 &4\\
	\none &\none &3&4&\none[\bullet]\\
	2 &3&\none[\bullet]
\end{ytableau}$$
and $U_2$ is constructed as
$$\begin{ytableau}
	\none&\none&1& 2 &4\\
	\none &\none[\bullet] &3&4\\
	2 &3
\end{ytableau}
\qquad
\begin{ytableau}
	\none&\none&1& 2 &4\\
	\none & 3&\bullet&4\\
	2 &\bullet
\end{ytableau}\qquad \begin{ytableau}
	\none&\none&1& 2 &4\\
	\none & 3&4&\none[\bullet]\\
	2 &\none[\bullet]
\end{ytableau}$$
and $U_1$ is constructed as 
$$\begin{ytableau}
	\none&\none[\bullet]&1& 2 &4\\
	\none[\bullet] & 3&4\\
	2 
\end{ytableau}\qquad
\begin{ytableau}
	\none & 1&\bullet & 2 &4\\
	\bullet & 3&4\\
	2 
\end{ytableau}
\qquad
\begin{ytableau}
	\none & 1&2& \bullet &4\\
	2 & 3&4\\
	\bullet 
\end{ytableau}
\qquad
\begin{ytableau}
	\none & 1&2& 4 &\none[\bullet]\\
	2 & 3&4\\
	\none[\bullet] 
\end{ytableau}$$
and then an easy calculation shows that $\fjdt_T(U) = U_0$ is
$$\begin{ytableau}
	1&2& 4 \\
	2 & 3
\end{ytableau}\ .$$
As illustrated in this example, the fact that $T$ is a straight shape implies that $\fjdt_T(U)$ will also be a straight shape.  Using classical terminology, we call $\fjdt_T(U)$ a \emph{rectification} of $U$ by $T$.    In the classical setting  involving standard Young tableaux, rectification is invariant under the choice of a standard Young tableau $T$ with shape $\lambda$.  In the setting of increasing tableaux this is no longer the case.  In \cite{Gaetz2016}, Example 22, the authors give an example of a $U$ such that $\fjdt_T(U) \neq \fjdt_S(U)$ for two different choices of $T,S \in \Inc(\lambda)$.  

On the other hand, there are special cases where rectifications are unique. 
\begin{definition}
	An increasing tableau $T$ with straight shape is called a \emph{unique rectification target} (URT) if for every skew tableau $U$ that has $T$ as a rectification, $T$ is the only rectification of $U$. In this case we define $\Rect_T(\mu/\lambda)$ to be the set of all tableaux with shape $\mu/\lambda$ that rectify to $T$.    
\end{definition}
The phenomenon of URT's was first isolated in \cite{Buch2013}.  For our purposes we shall only need the following instance of this phenomenon, which was proved in \cite{Buch2013}, Corollary 4.7.
\begin{fact}\label{fact:min is URT}
Every minimal tableau is a unique rectification target.
\end{fact}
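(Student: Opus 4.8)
To prove that every minimal tableau is a unique rectification target, the plan is to reduce to a uniqueness assertion about $K$-Knuth classes and then argue by induction on the largest entry. The only external input we need is that $K$-theoretic jeu de taquin respects $K$-Knuth equivalence of reading words: the (row) reading word $\operatorname{rw}(V)$ of any rectification $V$ of a skew increasing tableau $U$ is $K$-Knuth equivalent to $\operatorname{rw}(U)$ (Thomas--Yong~\cite{Thomas2007}; see also~\cite{Buch2013}), where $\operatorname{rw}(\cdot)$ reads rows left to right, bottom to top. In particular any two rectifications of one skew increasing tableau have $K$-Knuth equivalent reading words, so it suffices to prove the claim $(\mystar)$: \emph{$M_\lambda$ is the only increasing tableau of straight shape whose reading word is $K$-Knuth equivalent to $\operatorname{rw}(M_\lambda)$.} Indeed, granting $(\mystar)$, if $T$ is any rectification of a skew tableau $U$ of which $M_\lambda$ is also a rectification, then $\operatorname{rw}(T)\equiv_K\operatorname{rw}(U)\equiv_K\operatorname{rw}(M_\lambda)$, forcing $T=M_\lambda$ --- which is the URT property.

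To prove $(\mystar)$, I would induct on $c:=\max_{(i,j)\in\lambda}(i+j-1)$, the largest entry of $M_\lambda$; the base case $\lambda\in\{\emptyset,(1)\}$ is immediate. Since box $(i,j)$ of $M_\lambda$ holds $i+j-1$, the entries of $M_\lambda$ are exactly $1,\dots,c$, each occurring. As none of the four $K$-Knuth moves alters the set of letters of a word, any increasing tableau $T$ of straight shape with $\operatorname{rw}(T)\equiv_K\operatorname{rw}(M_\lambda)$ uses precisely the values $1,\dots,c$; in particular $c$ is its maximal entry, and the boxes of $T$ holding $c$ form a set of removable corners. Let $T'$ be $T$ with those boxes deleted. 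Then $\operatorname{rw}(T')$ is $\operatorname{rw}(T)$ with every $c$ erased, which by Fact~\ref{fact:Knuth on subintervals of rows} is $K$-Knuth equivalent to $\operatorname{rw}(M_\lambda)$ with every $c$ erased; a short check identifies the latter word as $\operatorname{rw}(M_\nu)$, where $\nu:=\{(i,j)\in\lambda : i+j-1\le c-1\}$ --- the deleted boxes lie on a single antidiagonal and are all removable corners of $\lambda$, so $\nu$ is a partition and $M_\nu$ has largest entry $<c$. By the inductive hypothesis $T'=M_\nu$, so $T$ is obtained from $M_\nu$ by adjoining boxes that all hold $c$; since $T$ is increasing of straight shape, these new boxes form an antidiagonal strip.

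It remains to identify this strip with $\lambda/\nu$, which gives $T=M_\lambda$. Here the tool is Fact~\ref{fact:longest inc and dec} together with Fact~\ref{fact:Knuth on subintervals of rows}: for every integer interval $[a,b]$, deleting from $\operatorname{rw}(T)$ the entries outside $[a,b]$ yields a word $K$-Knuth equivalent to the analogous restriction of $\operatorname{rw}(M_\lambda)$, hence one with the same $\lis$ and the same $\lds$. For an increasing tableau of straight shape these statistics carry combinatorial meaning: after restricting the entries to a window $[a,b]$, the $\lds$ of the reading word equals the number of rows that meet the window while the $\lis$ equals the number of columns that meet it (realized in each case by the leftmost, respectively topmost, entry of the relevant rows or columns). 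Letting $[a,b]$ range over all windows then recovers the shape of $T$ and every entry of $T$, forcing $T=M_\lambda$ and closing the induction.

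I expect this last step to be the main obstacle. The prefix windows $[1,m]$ alone recover only the first row and first column of $T$, so the interior windows are genuinely needed, and organizing that reconstruction --- together with the cited compatibility of $K$-jeu de taquin with $K$-Knuth equivalence of reading words, which if one prefers can be verified by hand by following a single slide --- is where the bulk of the argument lies.
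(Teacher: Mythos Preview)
The paper does not give a proof of this fact at all; it simply records it with a citation to \cite{Buch2013}, Corollary~4.7. So there is no ``paper's proof'' to compare against, but your proposal should still be assessed on its own merits.

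Your reformulation via $(\mystar)$ is correct and is essentially how Buch--Samuel frame the problem, and the inductive step identifying $T|_{[1,c-1]}$ with $M_\nu$ via Fact~\ref{fact:Knuth on subintervals of rows} is fine. The gap is exactly where you flagged it: the reconstruction of the $c$-strip. Your key claim---that for a straight-shape increasing tableau, the $\lds$ (resp.\ $\lis$) of the reading word restricted to a window $[a,b]$ equals the number of rows (resp.\ columns) meeting that window---is false. Already for $M_{(3,2,1)}$ restricted to $[2,3]$, all three rows meet the window but the restricted reading word $32323$ has $\lds=2$, since only two distinct letters are present. More damagingly, the full collection of $\lis/\lds$ values over all windows does \emph{not} determine the tableau. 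With $\lambda=(3,2,1)$, $\nu=(2,1)$, $c=3$, take
\[
T=\begin{ytableau}1&2&3\\2\\3\end{ytableau}
\]
(so $T$ is $M_\nu$ with $3$'s added only at $(1,3)$ and $(3,1)$, omitting $(2,2)$). Its reading word is $32123$, while $\Read(M_\lambda)=323123$. A direct check shows that for every window $[a,b]\subseteq[1,3]$ the restricted reading words of $T$ and of $M_\lambda$ have identical $\lis$ and identical $\lds$. Thus your scheme cannot separate $T$ from $M_\lambda$, even though (as the fact you are trying to prove asserts) their reading words are \emph{not} $K$-Knuth equivalent. The argument in \cite{Buch2013} uses a finer analysis than windowed $\lis/\lds$ to locate the top entries; that extra ingredient is what your proposal is missing.
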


As expected, we can also define $\rjdt_U(T)$. To do so, let $k$ be the largest entry of $U$ and let $C_i$ be the set of boxes in $U$ that contain the entry $i$.  Set $T_1:= T$, and recursively define
$$T_{i+1} = \rjdt_{C_i}(T_i)$$
for $i = 1,2,\ldots, k$. Set $\rjdt_U(T):= T_{k+1}$.

For example, if we take $T$ and $U$ as above then similar calculations show that $T_2,T_3,T_4,T_5$ are, respectively, 
$$\begin{ytableau}
	\none & 1 & 2& 4\\
	2 & 3&4\\
	4
\end{ytableau}
\qquad
\begin{ytableau}
	\none & \none &1 &2 &4\\
	\none & 3&4\\
	2&4
\end{ytableau}
\qquad
\begin{ytableau}
	\none & \none &1 &2 &4\\
	\none & \none&3&4\\
	2&3&4
\end{ytableau}
\qquad
\begin{ytableau}
	\none & \none &\none &1 &2\\
	\none & \none&1&3&4\\
	2&3&4
\end{ytableau}\ .$$

From $\fjdt_T(U)$ and $\rjdt_U(T)$ we obtain the \emph{K-infusion} operator, which is defined as follows.
\begin{definition}
    For tableaux $T\in \Inc(\lambda)$ and $U\in \Inc(\mu/\lambda)$ we define
    $$\fus(T,U):= (\fjdt_T(U), \rjdt_U(T)).$$  
\end{definition}
Two fundamental properties of K-infusion, which we shall need, are the following.   

\begin{fact}\label{fact:infusion shape}
	Let $T\in \Inc(\lambda)$ and $U\in \Inc(\mu/\lambda)$ and set $(U',T') = \fus(T,U) $. Then there is some shape $\rho\subseteq \mu$ so that $U'\in \Inc(\rho)$ and $T'\in \Inc(\mu/ \rho)$.  
\end{fact}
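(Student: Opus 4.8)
The plan is to track the shapes of the intermediate tableaux through the recursive definitions of $\fjdt_T(U)$ and $\rjdt_U(T)$ and verify that the ``hole'' left behind by one operator is exactly filled by the other. Recall that $T\in\Inc(\lambda)$ has entries $1,\dots,k$ with $B_i$ the boxes of $T$ containing $i$, and the rectification is built as $U_k:=U$, $U_{i-1}:=\fjdt_{B_i}(U_i)$, so $\fjdt_T(U)=U_0$. Dually $\rjdt_U(T)$ is built as $T_1:=T$, $T_{i+1}:=\rjdt_{C_i}(T_i)$ where $C_i$ is the set of boxes of $U$ containing $i$. The key elementary fact, which follows directly from the definitions of $\fjdt_B$ and $\rjdt_C$ together with the paragraph in the excerpt asserting that $T=\rjdt_C(S)$ when $S=\fjdt_B(T)$ and $C$ is the final location of the dots: a single K-jeu de taquin slide $\fjdt_B$ applied to an increasing tableau of shape $\alpha/\beta$, where $B$ is a set of inner corners of $\alpha/\beta$, produces an increasing tableau of shape $\alpha'/\beta'$ where $\beta'\subseteq\beta$ is obtained by removing the boxes of $B$ (these become the new, more northwest, empty region) and $\alpha'\subseteq\alpha$, with $\alpha\setminus\alpha'$ being exactly the set of outer-corner boxes where dots come to rest. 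In other words each slide moves a fixed number $|B|$ of boxes of the ``skew hole'' from the outer boundary to the inner boundary, possibly merging some.

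Concretely I would argue as follows. First, record the invariant for the forward sweep: by induction on $i$ from $k$ down to $0$, the tableau $U_i$ has shape $\mu^{(i)}/\lambda^{(i)}$ where $\lambda^{(i)}$ is the subshape of $\lambda$ consisting of those boxes of $\lambda$ that still ``belong'' to $T$ after the moves for entries $k,k-1,\dots,i+1$ have been processed — precisely, $\lambda^{(k)}=\lambda$ and $\lambda^{(i-1)}=\lambda^{(i)}\setminus B_i$ — so that $\lambda^{(0)}=\emptyset$ and $U_0=\fjdt_T(U)$ is straight of some shape $\rho$, $\rho\subseteq\mu^{(0)}\subseteq\mu$. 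Second, run the same bookkeeping on the reverse sweep building $\rjdt_U(T)$: here $T_i$ has shape $\nu^{(i)}/\emptyset$ initially growing, but the point is to see that the total set of boxes occupied across both outputs is conserved. The cleanest way is to observe that the whole K-infusion process is literally the same sequence of individual $\bullet$-slides read two ways: each box of $\lambda$ that leaves $T$ (via some $B_i$) becomes an empty inner corner of the $U$-side, and dually each box that the $U$-entries vacate on the $U$-side is picked up on the $T$-side. Hence at the end, writing $U'=\fjdt_T(U)$ of shape $\rho$ and $T'=\rjdt_U(T)$ of shape $\tau$, the multiset of boxes is conserved: $\rho\sqcup\tau=\mu$ as a decomposition of the Young diagram $\mu$, i.e.\ $\rho\subseteq\mu$ and $\tau=\mu/\rho$. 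This gives exactly the claimed statement.

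To make the conservation argument rigorous I would introduce the joint object: fill $\lambda$ with $T$ and $\mu/\lambda$ with $U$ to get a single filling of $\mu$, then note that K-infusion is defined so that one runs all the inner-corner slides using the entries of $T$ (smallest first) to push $U$-material northwest while the vacated positions get back-filled with the entries of $T$ running the reverse slides — the reference is \cite{Thomas2007}, and the shape statement is essentially built into how $\fus$ is defined there. I would cite the reverse-slide identity already displayed in the excerpt ($T=\rjdt_C(S)$ with $S=\fjdt_B(T)$) as the local inverse fact that guarantees no box is lost or created at any single step, then conclude by summing over all $|B_1|+\cdots+|B_k|$ steps (with the understanding that merges on one side correspond to splits on the other, so box-count identities are replaced by ``set of occupied cells'' identities).

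**Main obstacle.** The one genuinely delicate point is the K-theoretic (as opposed to classical) bookkeeping: in ordinary jeu de taquin each slide moves exactly one box, so shapes change by a single cell and the conservation $\rho\sqcup\tau=\mu$ is a trivial count; but in the K-theoretic version a slide can move a connected set of dots simultaneously and two dots can merge (the "$aa\leftrightarrow a$"-type phenomenon), so the inner and outer boundaries can change by different numbers of cells and one must be careful that the ``hole'' on the two sides still fits together perfectly as $\mu/\rho$. Handling this correctly — showing that a merge on the $\fjdt$ side is exactly mirrored by the corresponding reverse slide on the $\rjdt$ side so that the union of the two output shapes is still all of $\mu$ — is where the real content lies; everything else is routine induction on the entries of $T$ (or of $U$).
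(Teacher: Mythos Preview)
The paper does not prove this statement; it is recorded as a Fact without argument or explicit citation, presumably as a basic property established alongside the construction in \cite{Thomas2007}. So there is no proof in the paper to compare against, and your task was really to supply what the paper omits.

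Your overall strategy --- form the joint filling of $\mu$ and watch the occupied region migrate --- is the standard one, and you correctly isolate the K-theoretic merging of dots as the place where care is needed. There is, however, a real gap in your central claim that ``the whole K-infusion process is literally the same sequence of individual $\bullet$-slides read two ways.'' As defined in the paper, $\fjdt_T(U)$ and $\rjdt_U(T)$ are two \emph{independent} recursions: the first performs one slide per distinct entry of $T$ (processed from largest to smallest --- your ``smallest first'' is a slip), the second performs one slide per distinct entry of $U$ (smallest to largest). These need not even involve the same number of slides, so they cannot literally be one sequence read forwards and backwards, and the single-slide inverse identity $\rjdt_C\circ\fjdt_B=\mathrm{id}$ you invoke does not by itself bridge the two different indexings.

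What actually works is close to your ``joint object'' paragraph but with an extra step you do not supply. In the computation of $\fjdt_T(U)$, after each slide $U_{i-1}=\fjdt_{B_i}(U_i)$ the dots come to rest at a set $D_i$ of vacated outer boxes; placing the value $i$ into $D_i$ builds, layer by layer, an increasing tableau $\tilde T$ which \emph{by construction} has shape $\mu/\rho$, where $\rho=\sh(U_0)$. That already gives the shape claim for the pair $(U_0,\tilde T)$. The substantive content --- and the part that genuinely requires the argument in \cite{Thomas2007} --- is that this $\tilde T$ coincides with $\rjdt_U(T)$ as defined via the positions of the entries in the \emph{original} $U$. Your write-up asserts this identification (``the vacated positions get back-filled with the entries of $T$ running the reverse slides'') but does not argue it. You are right that this is where the real work lies; you just have not done it.
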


\begin{fact}[{\cite[Theorem 3.1]{Thomas2007}}]\label{fact:infusion involution}
	Letting $T$ and $U$ be as above, we have 
	$$\fus(\fus(T,U)) = (T,U),$$
	i.e., the K-infusion operator is an involution.
\end{fact}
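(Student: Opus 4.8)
The plan is to prove Fact~\ref{fact:infusion involution} --- that K-infusion is an involution --- by reducing the general statement to the single-step case and then verifying that step by a careful local analysis. First I would set $(U',T') = \fus(T,U)$, so by definition $U' = \fjdt_T(U)$ and $T' = \rjdt_U(T)$, and Fact~\ref{fact:infusion shape} tells us $U'\in\Inc(\rho)$ and $T'\in\Inc(\mu/\rho)$ for some $\rho\subseteq\mu$. The goal is to show $\fus(T',U') = (T,U)$, i.e. that $\fjdt_{T'}(U') = T$ and $\rjdt_{U'}(T') = U$. The natural strategy is an induction that unwinds the recursive definitions of $\fjdt_T$ and $\rjdt_U$ into their constituent single-entry $\fjdt_B$ and $\rjdt_C$ moves, and then invokes the basic reversibility fact already recorded in the excerpt: if $S = \fjdt_B(T_0)$ then $T_0 = \rjdt_C(S)$ where $C$ is the final location of the dots. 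This is precisely the ``local'' involutivity of a single jeu de taquin slide, and the content of the theorem is that these local reversals assemble coherently when the corner sets are dictated by the entries of a whole tableau.

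The key steps, in order, would be: (1) Expand $\fjdt_T(U)$ as the composite $\fjdt_{B_1}\circ\cdots\circ\fjdt_{B_k}(U)$ where $B_i$ is the set of boxes of $T$ holding the entry $i$, producing intermediate tableaux $U = U_k, U_{k-1}, \ldots, U_0 = U'$; simultaneously expand $\rjdt_U(T)$ as $\rjdt_{C_k}\circ\cdots\circ\rjdt_{C_1}(T)$ with intermediate tableaux $T = T_1, \ldots, T_{k+1} = T'$, where $C_j$ are the boxes of $U$ holding $j$. (2) Track, at each stage, how a single slide $U_i \mapsto U_{i-1} = \fjdt_{B_i}(U_i)$ moves the dots: it converts the inner corners $B_i$ into a set of outer corners, and by the single-slide reversibility these are exactly the corner set needed to invert the move. (3) Reorganize the computation of $\fus(T',U') = (\fjdt_{T'}(U'), \rjdt_{U'}(T'))$ so that its sequence of single slides is literally the reverse of the sequence used to compute $\fus(T,U)$ --- here one must check that the entries of $T'$ and $U'$, which control the order and corner-sets of the new slides, recover exactly the reversed schedule. (4) Conclude by telescoping: each reversed single slide undoes the corresponding original single slide, so the composite returns $(T,U)$.

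The main obstacle, and the step I expect to require the most care, is step (3): verifying that the bookkeeping matches up exactly. The subtlety is that $\fjdt_T$ processes the entries of $T$ in decreasing order while producing $U'$, whereas $\rjdt_U$ processes the entries of $U$ in increasing order while producing $T'$; when we then run $\fus$ on $(T',U')$ we must show that the interleaving of forward slides on $U'$ (governed by $T'$) and reverse slides on $T'$ (governed by $U'$) exactly retraces --- in reverse --- the interleaving that occurred the first time. In the K-theoretic setting this is genuinely more delicate than in the classical case, because entries can appear multiple times in a row or column and a slide can create or destroy boxes, so the correspondence between inner-corner sets one time and outer-corner sets the next is not simply ``transpose a diagram.'' The cleanest way to handle this is to prove a slightly stronger statement by induction on the number of distinct entries (or on $|\mu/\lambda|$), peeling off either the largest entry of $T$ or the structure of the first slide, so that the inductive hypothesis supplies the involutivity for the smaller configuration and one only has to check the compatibility of a single outermost slide. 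I would also lean on Fact~\ref{fact:infusion shape} throughout to guarantee that all intermediate objects are honest increasing tableaux of the expected shapes, so that the recursive definitions are well-posed at every stage. (Of course, since this is Theorem~3.1 of Thomas--Yong~\cite{Thomas2007}, one may alternatively simply cite it; the sketch above indicates the route one would take for a self-contained argument.)
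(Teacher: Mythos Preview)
The paper does not give a proof of this fact: it is stated and attributed to Thomas--Yong \cite[Theorem~3.1]{Thomas2007}, so there is nothing to compare against beyond the citation. You yourself note at the end that one may simply cite the result.

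Your sketch follows the natural route --- reduce to the reversibility of a single K-jdt slide (already recorded in the paper just before the definition of K-infusion), and then check that the schedule of slides in the second application of $\fus$ exactly retraces, in reverse, the schedule for the first. Two small corrections. First, having set $(U',T')=\fus(T,U)$ with $U'\in\Inc(\rho)$ and $T'\in\Inc(\mu/\rho)$, the second application must be $\fus(U',T')$, not $\fus(T',U')$; the identities to establish are $\fjdt_{U'}(T')=T$ and $\rjdt_{T'}(U')=U$, with the roles of $U'$ and $T'$ swapped from what you wrote. Second, in your step~(1) the same symbol $k$ is used for the largest entry of $T$ and for the largest entry of $U$; these are unrelated in general. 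Neither slip is fatal. The substantive point you flag as the main obstacle --- that the entry-$i$ boxes of $T'$ must coincide with the outer-corner endpoints of the forward slide $\fjdt_{B_i}$, so that the reversed schedule really is governed by the entries of $T'$ and $U'$ --- is indeed the heart of the argument, and is what Thomas--Yong establish.
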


\begin{lemma}\label{lem:fus bij}
    Fix shapes $\tau \subseteq \nu$ and let $T\in \Inc(\tau)$ be a URT.  Then the function
    $$\fus(T,\cdot): \Inc(\nu / \tau) \to \bigcup \Big(\Inc(\lambda) \times \Rect_T(\nu/ \lambda)\Big),$$
    where the union is over all shapes $\lambda\subseteq \nu$, is a bijection.
\end{lemma}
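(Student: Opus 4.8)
The plan is to exhibit an explicit two-sided inverse built from K-infusion itself, using the involution property (Fact~\ref{fact:infusion involution}) together with the shape statement (Fact~\ref{fact:infusion shape}) and the URT hypothesis (which enters via Fact~\ref{fact:min is URT} in the intended application). First I would check that $\fus(T,\cdot)$ actually lands in the claimed codomain: given $U\in\Inc(\nu/\tau)$, write $(U',T')=\fus(T,U)$. By Fact~\ref{fact:infusion shape} there is a shape $\lambda\subseteq\nu$ with $U'\in\Inc(\lambda)$ and $T'\in\Inc(\nu/\lambda)$; since $U'=\fjdt_T(U)$ is by definition a rectification of $U$ by $T$ and $T$ is straight, $U'$ has straight shape, so $\lambda$ is an honest partition and $U'$ is a rectification of $U$ to the URT $T$. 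Wait --- I should be careful here: $\fjdt_T(U)$ is \emph{some} rectification, and the definition of $\Rect_T$ concerns tableaux whose rectification \emph{is} $T$; because $T$ is a URT, any $U$ with $T$ as \emph{a} rectification has $T$ as its \emph{only} rectification, so I need $U'=T$, i.e. $\fjdt_T(U)=T$. That is exactly the content of $T$ being a URT applied to $U$, provided $U$ rectifies to $T$ — but a generic $U\in\Inc(\nu/\tau)$ need not rectify to $T$ at all. So the codomain description must instead be read as: $U'$ is a rectification of $U$, hence (since $U'$ is straight of some shape $\lambda$ and $T'\in\Inc(\nu/\lambda)$) the pair $(U',T')$ lies in $\Inc(\lambda)\times\Inc(\nu/\lambda)$, and moreover $T'$ rectifies to $T$. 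This last point is the key: I claim $\fjdt_{U'}(T')=T$. This should follow from Fact~\ref{fact:infusion involution}, since $\fus(U',T')=(T,U)$ unpacks the reverse jeu de taquin back to $T$; making that precise is the first real step.

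The second step is to define the candidate inverse. Given a shape $\lambda\subseteq\nu$ and a pair $(V,S)\in\Inc(\lambda)\times\Rect_T(\nu/\lambda)$, set $\Phi(V,S):=\fjdt_{?}(\cdot)$ --- more precisely, I would \emph{not} use $V$ but rather use $T$: the natural guess is the second coordinate of $\fus(S\text{-something})$. Concretely, since $S$ rectifies to $T$, we have $\fus(V,S)$ defined (as $V\in\Inc(\lambda)$, $S\in\Inc(\nu/\lambda)$), and I would \emph{instead} run K-infusion the ``other way''. The cleanest formulation: the inverse map sends $(V,S)\in\Inc(\lambda)\times\Rect_T(\nu/\lambda)$ to the first coordinate of $\fus(V,S)$, i.e. to $\fjdt_V(S)\in\Inc(\nu/\tau)$ --- here I must check the shape comes out to be $\nu/\tau$, which is where I use that $S$ rectifies to $T\in\Inc(\tau)$ together with Fact~\ref{fact:infusion shape}, plus the URT property to pin down that the straight part of $\fus(V,S)$ is genuinely $\tau$ and not merely some shape of the right size. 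That pinning-down is the crux.

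The third step is to verify $\Phi$ and $\fus(T,\cdot)$ are mutually inverse, which is where the involution does the work. If $(U',T')=\fus(T,U)$ with $U'\in\Inc(\lambda)$, then $\Phi(U',T')=\fjdt_{U'}(T')$, which is the first coordinate of $\fus(U',T')=\fus(\fus(T,U))=(T,U)$ by Fact~\ref{fact:infusion involution} --- wait, that gives $T$, not $U$. So I have the coordinates crossed: I want $\Phi$ to read off the \emph{second} coordinate of $\fus(U',T')$, which is $U$. So the correct definition of $\Phi(V,S)$ is the second coordinate of $\fus(V,S)$, i.e. $\rjdt_S(V)$. With that fix, $\Phi(\fus(T,U))=\Phi(U',T')=\rjdt_{T'}(U')=$ (second coord of $\fus(U',T')$) $=$ (second coord of $(T,U)$) $=U$, using Fact~\ref{fact:infusion involution}. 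Conversely, for $(V,S)\in\Inc(\lambda)\times\Rect_T(\nu/\lambda)$, we have $\fus(T,\Phi(V,S))=\fus(T,\rjdt_S(V))$; to see this equals $(V,S)$ I want $\fus(T,\cdot)$ and $\fus(\cdot,\cdot)$ to compose via the involution, which forces me to know that $\fus(V,S)=(T,\rjdt_S(V))$ \emph{as a pair} --- i.e., that the first coordinate $\fjdt_V(S)$ equals $T$. That is precisely the URT hypothesis: $S$ rectifies to $T$, $T$ is a URT, hence every rectification of $S$ equals $T$, in particular $\fjdt_V(S)=T$. Then $\fus(T,\Phi(V,S))=\fus(T,\rjdt_S(V))=\fus(\text{second coord of }\fus(V,S))$; writing $\fus(V,S)=(T,W)$ with $W=\rjdt_S(V)$, we get $\fus(T,W)=\fus(\fus(V,S))=(V,S)$ by Fact~\ref{fact:infusion involution}.

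\textbf{Main obstacle.} The genuinely delicate point, and the one I expect to spend the most care on, is the bookkeeping of shapes: verifying that $\fus(T,\cdot)$ really maps \emph{onto} $\bigcup_{\lambda\subseteq\nu}\Inc(\lambda)\times\Rect_T(\nu/\lambda)$ and not some larger or smaller set. Surjectivity is handled by exhibiting $\Phi$ as above, but one must confirm at each application of $\fus$ that Fact~\ref{fact:infusion shape} yields a shape of the correct total size and, crucially, that the URT property of $T$ upgrades ``$\fjdt_V(S)$ is a straight shape of size $|\tau|$'' to ``$\fjdt_V(S)=T$ exactly,'' so that $\Phi(V,S)$ genuinely lies in $\Inc(\nu/\tau)$. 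Everything else is a formal consequence of the involution identity, but this shape-tracking is where the URT hypothesis is indispensable and where a careless argument would break.
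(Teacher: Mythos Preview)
Your proposal is correct and follows essentially the same approach as the paper: use the involution property (Fact~\ref{fact:infusion involution}) for injectivity, and for surjectivity apply $\fus$ to a pair $(V,S)$ in the codomain, invoke the URT hypothesis to conclude the first coordinate is exactly $T$, then use Fact~\ref{fact:infusion shape} to see the second coordinate has shape $\nu/\tau$. Your write-up is more thorough than the paper's in that you explicitly verify the map lands in the stated codomain (i.e., that $T'\in\Rect_T(\nu/\lambda)$, via $\fjdt_{U'}(T')=T$ from the involution), which the paper leaves implicit; otherwise the arguments coincide.
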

\begin{proof}
It follows from the fact that K-infusion is an involution (Fact~\ref{fact:infusion involution}) that our mapping is injective.  To see that it is also surjective, consider some $(U,R)$ in the codomain and set
$$\fus(U,R) = (T,U').$$  
As $U \cup R$ has shape $\nu$, it follows from Fact~\ref{fact:infusion shape} that $T\cup U'$ also has shape $\nu$.  As $T$ has shape $\tau$, we see that $U'$ must have shape $\nu/\tau$. Fact~\ref{fact:infusion involution}  now implies that our mapping is surjective.  
\end{proof}

\begin{remark}\label{rmk:Inc}
We will later use the fact that Lemma~\ref{lem:fus bij} remains valid if we replace $Inc(\mu)$ throughout by $Inc^{st}(\mu)$.

\end{remark}

The special case of Lemma~\ref{lem:fus bij} where $\tau = \sigma_{n-1}, \nu = \sigma_n$ and $T = M_{n-1}$ will be used extensively in what follows.  In this case, we identify $\fW_n$ with  $\Inc(\sigma_n /\sigma_{n-1})$ by mapping the $i$th integer in a word to box $(n+1-i,i)$, using matrix notation, and our mapping becomes
$$\fus(M_{n-1},\cdot): \fW_n \to \bigcup \Big(\Inc(\lambda) \times \Rect_{M_{n-1}}(\sigma_n/ \lambda)\Big).$$

\begin{definition}

We define $\cW(w)$ and $\cM(w)$ by writing $\fus(M_{n-1},w)=(\cW(w), \cM(w))$.  

\end{definition}
As discussed in the introduction, one of the central points of this paper is to capture the RID and CID data of a word $w$ in a unified way.  We shall show that $\cW(w)$ captures RID data (see Theorem~\ref{thm:W implies rid}) and that $\cM(w)$ captures CID data (see Theorem~\ref{thm:M implies cid}). Our result for $\cW(w)$ will be easy to obtain, once we have a connection between K-jeu de taquin and K-Knuth equivalence.

Two increasing tableaux $T$ and $T'$ are said to be \emph{K-Knuth equivalent} if their reading words are K-Knuth equivalent. Recall that the \emph{reading word} of a tableau $T$ is the word $\Read(T)$ obtained by reading $T$ from bottom to top and left to right.  For example, if 
 $$T = \ytableaushort{22345,3447,667}$$
 then $\Read(T) = 667344722345$.  On the other hand, $T$ and $T'$ are said to be \emph{K-jeu de taquin equivalent}  if each can be obtained from the other by applying $\fjdt_B$ and $\rjdt_C$ for some finite number of choices of $B$ and $C$.
 
  \begin{fact}\label{fact:Kjdt and Knuth} (\cite{Buch2013}, Theorem 6.2)
 Increasing tableaux $T$ and $T'$ are K-Knuth equivalent if and only if they are K-jeu de taquin equivalent.
 \end{fact}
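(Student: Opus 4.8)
The plan is to prove the two implications separately, in each case relating elementary K-jeu de taquin slides to the four defining K-Knuth moves. I write $\Read(T)$ for the reading word of an increasing tableau $T$ (so that ``$T$ and $T'$ are K-Knuth equivalent'' means $\Read(T)\equiv_K\Read(T')$) and use $\equiv_{\mathrm{jdt}}$ for the K-jeu de taquin equivalence relation.

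\emph{K-jdt equivalent $\Rightarrow$ K-Knuth equivalent.} Since a reverse slide is the inverse of a forward slide (as noted after the definition of reverse $K$-jeu de taquin), it suffices to show that $\Read(\fjdt_B(T))\equiv_K\Read(T)$ for every increasing tableau $T$ and every set of inner corners $B$. I would run $\fjdt_B$ one elementary step at a time; in each step a single dot either swaps horizontally with the entry on its right, swaps vertically with the entry below it, or is absorbed into an equal neighbor. A horizontal swap leaves $\Read$ unchanged, since dots are not read. A vertical swap promotes one entry by one row, so in $\Read$ that entry jumps forward past a contiguous block of entries whose order relative to it is constrained by the strict-increase conditions on $T$; an absorption step deletes one copy of a repeated value. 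The core of this direction is the resulting finite case analysis --- a K-theoretic refinement of Sch\"utzenberger's classical jeu-de-taquin argument --- showing that each vertical jump is a composition of the moves $bca\leftrightarrow bac$ and $acb\leftrightarrow cab$ (supplemented, when equal values are encountered, by $\cdots a\cdots\leftrightarrow\cdots aa\cdots$ and $\cdots aba\cdots\leftrightarrow\cdots bab\cdots$), and that each absorption is an instance of $\cdots aa\cdots\leftrightarrow\cdots a\cdots$.

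\emph{K-Knuth equivalent $\Rightarrow$ K-jdt equivalent.} Suppose $\Read(T)\equiv_K\Read(T')$. First I would rectify: choosing any straight-shape $R=\fjdt_J(T)$ and $R'=\fjdt_{J'}(T')$, each rectification is a sequence of forward slides, so $T\equiv_{\mathrm{jdt}}R$ and $T'\equiv_{\mathrm{jdt}}R'$, and by the first part $\Read(R)\equiv_K\Read(R')$; thus we may assume $T$ and $T'$ have straight shape. For a word $w$ of length $n$, let $S_w\in\Inc(\sigma_n/\sigma_{n-1})$ be the anti-diagonal strip tableau spelling $w$ along its strip, so $\Read(S_w)=w$. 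Every straight-shape increasing tableau $R$ equals the Hecke insertion tableau of $\Read(R)$, and Hecke insertion coincides with K-rectification of the corresponding strip; hence $R$ is a rectification of $S_{\Read(R)}$, so $R\equiv_{\mathrm{jdt}}S_{\Read(R)}$, and likewise $R'\equiv_{\mathrm{jdt}}S_{\Read(R')}$. It then remains to show that if $w'$ is obtained from $w$ by a single K-Knuth move then $S_w\equiv_{\mathrm{jdt}}S_{w'}$: for $bca\leftrightarrow bac$ and $acb\leftrightarrow cab$ one realizes the rearrangement by the classical local slide pattern on three nearby cells, while for $\cdots a\cdots\leftrightarrow\cdots aa\cdots$ and $\cdots aba\cdots\leftrightarrow\cdots bab\cdots$ one uses the box-creating and box-merging slides peculiar to K-jeu de taquin. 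Chaining these along an equivalence $\Read(T)\equiv_K\Read(T')$ yields $T\equiv_{\mathrm{jdt}}R\equiv_{\mathrm{jdt}}S_{\Read(R)}\equiv_{\mathrm{jdt}}\cdots\equiv_{\mathrm{jdt}}S_{\Read(R')}\equiv_{\mathrm{jdt}}R'\equiv_{\mathrm{jdt}}T'$.

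I expect the main obstacle to be the local case analysis in the first direction: correctly identifying the block of entries that a vertical slide step passes over, and handling the absorption steps, which have no analogue in the classical theory. A secondary difficulty is writing down the explicit K-jdt slide sequences that realize the two ``extra'' K-Knuth moves on anti-diagonal strips in the converse. This analysis is the content of Buch and Samuel's argument for the cited result, which is why the present paper takes it as an input.
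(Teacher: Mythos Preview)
The paper does not give its own proof of this statement; it simply quotes it as Theorem~6.2 of Buch--Samuel and uses it as a black box. So there is nothing in the paper to compare your argument against, and any proof you supply is already ``more'' than what the paper does.

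That said, your sketch for the forward direction misdescribes $K$-jeu de taquin. You write that ``in each step a single dot either swaps horizontally \ldots, swaps vertically \ldots, or is absorbed into an equal neighbor.'' That is the classical one-hole jdt picture, not $K$-jdt. In $\fjdt_B$ one starts with a \emph{set} of dots, and at the step for the value $k$ every dot adjacent to a $k$ becomes a $k$ while every such $k$ simultaneously becomes a dot. In particular a single dot whose right and lower neighbors are both equal to $k$ spawns \emph{two} dots, and conversely two dots can merge. Thus even when $|B|=1$ the number of dots can grow during the slide, and the interaction with $\Read$ is not captured by a single-entry ``jump past a contiguous block.'' The case analysis you allude to must track a varying set of dots and the creation/annihilation of duplicate entries, which is exactly where the extra $K$-Knuth relations $a\leftrightarrow aa$ and $aba\leftrightarrow bab$ enter; Buch and Samuel handle this with a more structured argument than a direct Sch\"utzenberger-style local check. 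Your converse direction is closer to workable, though the step ``Hecke insertion coincides with $K$-rectification of the corresponding strip'' is itself a nontrivial input (Thomas--Yong), and realizing $a\leftrightarrow aa$ and $aba\leftrightarrow bab$ by explicit $K$-jdt slides on antidiagonal strips of different lengths requires several slides, not one, since the shapes involved differ.
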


 \begin{thm}\label{thm:W implies rid}
 	If $\cW(u) = \cW(w)$ then $u \sim_{rid} w$.
 \end{thm}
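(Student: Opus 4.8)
The plan is to deduce the statement from the connection between K-infusion and K-Knuth equivalence, together with the two facts about K-Knuth equivalence (Facts~\ref{fact:Knuth on subintervals of rows} and \ref{fact:longest inc and dec}). First I would observe that the recipe defining $\cW(w)$ and $\cM(w)$, namely $\fus(M_{n-1},w) = (\cW(w),\cM(w))$, is built entirely out of the operators $\fjdt_B$ (and, on the $M_{n-1}$ side, $\rjdt_C$). Since passing from $w$ to $\cW(w)$ is accomplished by applying a sequence of $\fjdt_B$ moves to (the tableau of shape $\sigma_n/\sigma_{n-1}$ corresponding to) $w$, the word $w$, viewed as the reading word of that one-box-per-antidiagonal tableau, is K-jeu de taquin equivalent — hence, by Fact~\ref{fact:Kjdt and Knuth}, K-Knuth equivalent — to the reading word of $\cW(w)$. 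The point here is that $\cW(w)$ is a rectification of the skew tableau encoding $w$, so $w \equiv_K \Read(\cW(w))$.

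The second step is then immediate: if $\cW(u) = \cW(w)$, then $u \equiv_K \Read(\cW(u)) = \Read(\cW(w)) \equiv_K w$, so $u \equiv_K w$. Now fix an arbitrary integer interval $[a,b]$ and let $u'$, $w'$ be obtained from $u$, $w$ by deleting all letters outside $[a,b]$. By Fact~\ref{fact:Knuth on subintervals of rows}, $u' \equiv_K w'$, and then by Fact~\ref{fact:longest inc and dec} we get $\lis(u') = \lis(w')$ and $\lds(u') = \lds(w')$. Since $[a,b]$ was arbitrary, this is precisely the definition of $u \sim_{rid} w$.

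The main obstacle — really the only point that needs care — is the first step: making precise the claim that $w$ and $\cW(w)$ are K-jeu de taquin equivalent. One has to check that each $\fjdt_{B_i}$ applied in the recursive definition of $\fjdt_{M_{n-1}}(w)$ is genuinely one of the moves allowed in the definition of K-jeu de taquin equivalence (i.e.\ $B_i$ is a legitimate set of inner corners for the intermediate tableau $U_i$, which is guaranteed by the construction), and that reading words are unchanged in the way Fact~\ref{fact:Kjdt and Knuth} requires. This is routine given the setup already in this section, so I would state it as a short lemma (or simply a sentence) and move on. Everything after that is a two-line chain of implications using the quoted facts.
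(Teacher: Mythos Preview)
Your proposal is correct and follows essentially the same argument as the paper: use Fact~\ref{fact:Kjdt and Knuth} to get $u \equiv_K \Read(\cW(u)) = \Read(\cW(w)) \equiv_K w$, then apply Facts~\ref{fact:Knuth on subintervals of rows} and~\ref{fact:longest inc and dec} to conclude $u \sim_{rid} w$. The only point you flag for care (that each $B_i$ is a legitimate set of inner corners) is also treated as routine in the paper.
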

  \begin{proof} 
  Since $\cW(u)$ and $\cW(w)$ are obtained from the tableaux representing $u$ and $w$ by applying $\fjdt$, it follows from Fact~\ref{fact:Kjdt and Knuth} that $u\equiv_{K} \Read(\cW(u))$ and $w\equiv_{K} \Read(\cW(w))$.  Since $\cW(u)=\cW(w)$, we obtain $u\equiv_K w$.
 
 By Fact~\ref{fact:Knuth on subintervals of rows}, this implies that for any integer interval $[a,b]$,  the words obtained from $u$ and $w$ by deleting all entries not in $[a,b]$ are $K$-Knuth equivalent. By Fact~\ref{fact:longest inc and dec},  this implies that the lengths of longest decreasing subsequences in these reduced words are the same, and the lengths of  longest increasing subsequences are the same. So $u\sim_{rid} w$.
\end{proof}
 
 \begin{thm}\label{thm:M implies cid}
 	If $\cM(u) = \cM(w)$ then $u \sim_{cid} w$.
 \end{thm}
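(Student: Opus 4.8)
The plan is to exploit the K-infusion involution to convert a statement about the recording tableau $\cM(w)$ into a statement about $K$-Knuth equivalence, so that we can again invoke Facts~\ref{fact:Knuth on subintervals of rows} and \ref{fact:longest inc and dec}, but now applied to a suitable transpose-like reformulation of $w$ that reads off \emph{columns} of the matrix representation rather than rows. Concretely, suppose $\cM(u) = \cM(w)$, and set $M := \cM(u) = \cM(w)$, with $\cW(u), \cW(w) \in \Inc(\lambda_u), \Inc(\lambda_w)$ respectively. Applying $\fus$ to the pair $(\cW(u), M)$ and using that $\fus$ is an involution (Fact~\ref{fact:infusion involution}), we recover $(M_{n-1}, u)$; likewise $\fus(\cW(w), M) = (M_{n-1}, w)$. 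The idea is that running K-infusion ``from the $M$ side'' — i.e.\ computing $\rjdt_M(\cdot)$ applied to $\cW(u)$ and to $\cW(w)$, and tracking the forward-slid tableau — realizes $u$ and $w$ as reading words of skew tableaux that are $K$-jeu de taquin equivalent (both rectify, via $\fjdt_M$ or its inverse, to the \emph{same} tableau built from $M_{n-1}$ and $M$), and hence $K$-Knuth equivalent. The subtlety is that this equivalence is the one that tracks CID rather than RID data, so I expect we must first develop the ``iRSK'' and ``i-Knuth'' machinery promised in the introduction.

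The key steps, in order, would be: (1) Show that the CID statistics $\lis(w_i\cdots w_j)$ and $\lds(w_i\cdots w_j)$ are captured by a variant of Greene's theorem — specifically, that for the word $w$ read as the anti-diagonal skew strip $\sigma_n/\sigma_{n-1}$, the relevant longest increasing/decreasing lengths in \emph{subintervals of columns} are encoded by the shape data appearing during the K-infusion $\fus(M_{n-1}, w)$, i.e.\ by $\cM(w)$ together with the intermediate shapes $\rho$ from Fact~\ref{fact:infusion shape}. This is the analog, for the $\cM$-side, of the clean argument that worked for $\cW$. (2) Introduce iRSK: a modification of column/row insertion adapted to increasing tableaux, designed so that the insertion tableau records CID data the way ordinary RSK's recording tableau fails to (as the $u = 32311$, $w = 32211$ example in the introduction shows). (3) Define i-Knuth equivalence as the relation generated by the elementary moves that preserve iRSK insertion tableaux, and prove the analogs of Facts~\ref{fact:Knuth on subintervals of rows} and \ref{fact:longest inc and dec}: that i-Knuth equivalence is stable under restricting to a subinterval $[i,j]$ of \emph{positions} (not values), and that i-Knuth equivalent words have equal $\lis$ and $\lds$ on every such subinterval. (4) Finally, show $\cM(u) = \cM(w)$ forces $u$ and $w$ to be i-Knuth equivalent — this is where the involutivity of $\fus$ and the URT property of $M_{n-1}$ (Facts~\ref{fact:min is URT}, \ref{fact:infusion involution}, Lemma~\ref{lem:fus bij}) do the work, exactly paralleling the proof of Theorem~\ref{thm:W implies rid} but with the roles of the two output tableaux swapped and ``$K$-Knuth'' replaced by ``i-Knuth.''

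The main obstacle, and the reason this is deferred to Section~4 rather than dispatched alongside Theorem~\ref{thm:W implies rid}, is step (3): proving that i-Knuth equivalence genuinely controls $\lis$ and $\lds$ on all positional subintervals simultaneously. For the $\cW$-side, we got this for free from pre-existing literature (Buch--Samuel, Gaetz et al.); for the $\cM$-side there is no off-the-shelf result, because the natural statistics here are about subwords cut out by position, and the $K$-theoretic moves do not obviously interact well with truncating a word to $w_i \cdots w_j$. I expect we will need to set up iRSK carefully enough that a restriction-to-subinterval lemma holds — plausibly by showing the insertion process localizes, so that the portion of the insertion tableau ``created by'' positions in $[i,j]$ depends only on $w_i\cdots w_j$ — and then derive the $\lis$/$\lds$ invariance from a Greene-type description of that localized tableau. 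Secondarily, verifying that $\cM(w)$ (a recording tableau from the $\fus$ construction) coincides with the iRSK \emph{insertion} tableau up to the natural correspondence will require a compatibility lemma between K-infusion and iRSK insertion, analogous to the classical fact that jeu-de-taquin rectification computes the RSK insertion tableau.
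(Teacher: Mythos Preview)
Your outline names the right ingredients --- iRSK and an i-Knuth equivalence --- but the logical architecture is off in steps (3) and (4), and the key idea that makes the paper's proof work is absent.

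First, step (3) cannot work as stated. The i-Knuth relation includes the inflation move $\cdots a \cdots \leftrightarrow \cdots aa \cdots$, which changes word length; so ``stable under restricting to a positional subinterval $[i,j]$'' is not even well-defined across an i-Knuth class. In the paper, i-Knuth equivalence controls \emph{value}-interval statistics (it implies $\sim_{rd}$, Lemma~\ref{lem: rd}), not positional ones. There is no hope of proving directly that $u\equiv_i w$ forces $\lds(u_i\cdots u_j)=\lds(w_i\cdots w_j)$.

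Second, step (4) is the wrong target. The paper never shows that $\cM(u)=\cM(w)$ implies $u\equiv_i w$; in fact $\cM$ (via $\cQ$) governs the i-Knuth class of the \emph{inverse} word $w^{\mystar}$, not of $w$. The actual chain is: a column-by-column shape analysis of the intermediate tableaux $\W{}{j}(w)$ in the K-infusion (Lemmas~\ref{lem:cols of W}--\ref{lem:cM determines cW}) shows that $\cM(w)$ determines all the shapes $\sh(\cP(w_1\cdots w_j))$, hence $\cQ(w)$ (Lemma~\ref{lem:R -> Q}). Then Lemma~\ref{lem:reading words} gives $w^{\mystar}\equiv_i \Read(\cQ(w))$, so $\cQ(u)=\cQ(w)$ yields $u^{\mystar}\equiv_i w^{\mystar}$.

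The missing idea is precisely this $\mystar$ operation and Lemma~\ref{lem:rd,cd}: $u\sim_{cd} w$ if and only if $u^{\mystar}\sim_{rd} w^{\mystar}$. This is what converts positional (CID-type) statistics into value-interval (RID-type) statistics so that i-Knuth equivalence becomes applicable. Finally, the argument only delivers $u\sim_{cd} w$; to get $u\sim_{ci} w$ the paper passes to the reversed words $u^r,w^r$ (noting $\cM(u^r)=\cM(u)^*$), reruns the same argument, and combines. Your proposal to parallel the proof of Theorem~\ref{thm:W implies rid} by swapping the roles of the two output tableaux and replacing $K$-Knuth by i-Knuth does not go through, because K-infusion involutivity gives no direct i-Knuth relation between $u$ and $w$ themselves.
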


The proof of Theorem~\ref{thm:M implies cid} requires new ideas and a significant amount of effort.  As mentioned in the introduction, its proof occupies the entirety of Section~\ref{sec:cM}. We close this section with the following remark.
 
 \begin{remark}
 \sloppy Let $S_{n-1}$ denote the superstandard tableau of shape $\sigma_{n-1}$, so that the first row of $S_{n-1}$ has entries $1,\ldots, n-1$, the second row has entries $n,\ldots,2n-3,$ and so on.  In~\cite{Thomas2011}, Theorem 4.2, Thomas and Yong consider (in our notation) $\fus(S_{n-1},w)$. They show that the first entry of $\fus(S_{n-1},w)$ is the Hecke insertion tableau of $w$.
 As can be seen from the proof of Theorem~\ref{thm:W implies rid}, that theorem would still hold with $M_{n-1}$ replaced by $S_{n-1}$; in fact it holds when $M_{n-1}$ is replaced by any increasing tableau of the same shape.  On the other hand, Theorem~\ref{thm:M implies cid} fails if we replace $M_{n-1}$ by $S_{n-1}$.  To illustrate, consider the words $u = 1 2 1 1 3$ and $w = 1 3 1 2 3$ which clearly do not have the same CID data (as can be seen from their last three letters). Yet, using $S_4$ in place of $M_4$ we find that $\cM(u) = \cM(w)$  is the tableau 
 $$\begin{ytableau}
	\none&\none&\none&3&4\\
	\none&2&6&7\\
	1&5&9\\
	5&8\\
	10
\end{ytableau}\ .$$
 \end{remark}

\section{Row shifting  and column shifting}\label{sec:row col shifting}
 \ytableausetup{mathmode, boxsize=1.05em}

 To motivate this section, we recall the definition of Sch\"{u}tzenberger's promotion operator in the context of standard Young tableaux \cite{Schutzenberger1963}. For such a tableau $T$, its promotion $\partial T$ is the tableau obtained by deleting the entry 1 from $T$ and decrementing the remaining entries by 1, then rectifying the resulting skew tableau (using jeu de taquin), and finally placing the largest entry of $T$ in the vacated box. 
 
 We mention without proof two results that follow from well-known properties of the Robinson-Schensted (RS) algorithm and Knuth equivalence. (We will prove an analogue of the second of these results in Theorem 3.1  below.) The first result states that if $\pi$ is a permutation of length $n$ and $\RS(\pi) = (P,Q)$ and $\pi' = \RS^{-1}(P,\partial Q)$
 then 
 \begin{equation}\label{eq:shift cid}
     \pi \sim_{rid} \pi'\qquad\myand\qquad \pi_2\ldots \pi_n\sim_{cid} \pi'_1\ldots \pi'_{n-1}.
 \end{equation}  
 We think of the mapping from $\pi$ to $\pi'$ as fixing RID data but ``shifting left" CID data.  
 
 Similarly, the second result is obtained by promoting the insertion tableau rather than the recording tableau.  To state this we introduce some notation in the broader context of words.  For any word $w$ and letter $a$, we let $w\setminus a$ be the word obtained by deleting all occurrences of $a$ from $w$.  We let $w^-$ denote the word obtained from $w$  by decrementing all the letters by 1.  With this notation, if we set $\pi' = \RS^{-1}(\partial P, Q)$ then 
 \begin{equation}\label{eq:shift rid}
     \pi \sim_{cid}\pi' \qquad\myand\qquad  (\pi\setminus 1)^- \sim_{rid} \pi'\setminus n.
 \end{equation}
 We think of \emph{this} mapping from $\pi$  to $\pi'$  as fixing CID data but ``shifting down" RID data.  
 
 Similar constructs have been employed in other contexts.  In particular, Rubey \cite{Rubey2011} used a map similar to \eqref{eq:shift cid}, in the context of semistandard Young tableaux, variants of the RSK algorithm, and growth diagrams, to study the distribution of strictly decreasing and weakly increasing (or vice versa) subsequences of words, with applications to moon polyominoes.  Likewise, Guo and Poznanovi\'c \cite{GuoPoznanovik2020} used a map similar to that in \eqref{eq:shift rid}, in the context of increasing tableaux and Hecke insertion, to prove their Conjecture 15 for the special case of stack polyominoes. Their methods did not provide a map with the properties of the map in \eqref{eq:shift cid}, however.  From our point of view, this is why they did not obtain a proof of the full conjecture.

 The goal of this section is to use the mapping $w\mapsto (\cW(w),\cM(w))$ to obtain analogues of the maps in \eqref{eq:shift cid} and \eqref{eq:shift rid} in the context of words. We denote these analogues by $\cC$ and $\cR$, respectively.  Our analogue $\cR$ of the map in  \eqref{eq:shift rid} will be obtained by using a map like that in \eqref{eq:shift rid}, although we will not use Hecke insertion or Hecke growth diagrams as Guo and Poznanovi\'{c} did. This map yields the special case of Conjecture 15 that Guo and Poznanovi\'{c} proved. Our second map, $\cC$, will be obtained by using new, and considerably more involved, methods. This map will be the key to obtaining a full proof of Conjecture 15.

 \subsection{Row shifting}
 To define our row-shifting operator we first describe the analogue of the classical promotion operator $\partial$ for elements of $Inc^{st}(\lambda)$, where $\lambda$ is a straight shape.   To start let $W\in \Inc^{st}(\lambda)$ and note that, by definition, the smallest entry in $W$ is 1 and occurs in box $(1,1)$, as $W$ is a straight shape.  Let $W^-$ be the tableau obtained by deleting $1$ from $W$ and decrementing all the remaining values by 1.  Finally, let $\partial W$ be the result of rectifying $W^-$, i.e., computing $\fjdt_{\{(1,1)\}} W^-$, and filling the vacated boxes with the largest letter in $w$. It is not hard to see that this gives a bijection 
$$\partial: \Inc^{st}(\lambda)\to \Inc^{st}(\lambda).$$
To simplify notation below, we establish the convention that for a pair of increasing tableaux $(A,B)$, we set $\partial(A,B): = (\partial A, B)$.

For each $n$ we let $\fW^{st}_n$ denote the set consisting of all elements of $\fW_n$ whose entries constitute an integer interval with left endpoint 1.  It follows from Lemma~\ref{lem:fus bij} and the remark following it that for each $n\geq 2$  we have a bijection
 
$$f:\fW^{st}_n \to \bigcup_\lambda \Big(\Inc^{st}(\lambda)\times \Rect_{M_{n-1}}(\sigma_n/\lambda)\Big)$$
given by $w\mapsto \fus(M_{n-1},w)$.
As a first step in defining our row-shift operator $\cR:\fW_n \to \fW_n$  we define 
$$\cR:\fW^{st}_n \to \fW^{st}_n$$
by letting $\cR(w) = (f^{-1}\circ \partial \circ f)(w)$.  When $n=1$, we define $\cR$ to be the identity map.  Note that, since $\partial$ is bijective on $Inc^{st}(\lambda)$, it follows that $\cR$ is bijective on $\fW_n^{st}$.

As an illustration, let $w = 4231142$.   The left-hand figure below depicts the pair $(M_6,w)$.  The next three figures then give $\fus(M_6,w) = (\cW(w),\cM(w))$, $(\partial \cW(w), \cM(w))$, and, finally, $\fus^{-1}(\partial \cW(w), \cM(w))$.  Hence $\cR(w) = 3121143$.  

$$\begin{ytableau}
	*(green)1&*(green) 2 & *(green) 3 & *(green)4 & *(green)5& *(green) 6 & *(red) 2 \\
	*(green)2&*(green) 3 & *(green) 4 & *(green)5 & *(green)6& *(red) 4\\
	*(green)3&*(green) 4 & *(green) 5 & *(green)6 & *(red)1\\
	*(green)4&*(green) 5 & *(green) 6 & *(red)1\\
	*(green)5&*(green) 5 & *(red) 3\\
	*(green)6&*(red) 2\\
	*(red)4 
\end{ytableau}\quad 
\begin{ytableau}
	*(red)1&*(red) 2 & *(red) 4 & *(green)1 & *(green)3& *(green) 4 & *(green) 6 \\
	*(red)2&*(red) 3 & *(green) 1 & *(green)2 & *(green)4& *(green) 5\\
	*(red)4&*(green) 2 & *(green) 3 & *(green)5 & *(green)6\\
	*(green)1&*(green) 3 & *(green) 4 & *(green)6\\
	*(green)2&*(green) 4 & *(green) 5\\
	*(green)3&*(green) 6\\
	*(green)5 
\end{ytableau}\quad
\begin{ytableau}
	*(red)1&*(red) 2 & *(red) 3 & *(green)1 & *(green)3& *(green) 4 & *(green) 6 \\
	*(red)2&*(red) 4 & *(green) 1 & *(green)2 & *(green)4& *(green) 5\\
	*(red)3&*(green) 2 & *(green) 3 & *(green)5 & *(green)6\\
	*(green)1&*(green) 3 & *(green) 4 & *(green)6\\
	*(green)2&*(green) 4 & *(green) 5\\
	*(green)3&*(green) 6\\
	*(green)5 
\end{ytableau}\quad
\begin{ytableau}
	*(green)1&*(green) 2 & *(green) 3 & *(green)4 & *(green)5& *(green) 6 & *(red) 3 \\
	*(green)2&*(green) 3 & *(green) 4 & *(green)5 & *(green)6& *(red) 4\\
	*(green)3&*(green) 4 & *(green) 5 & *(green)6 & *(red)1\\
	*(green)4&*(green) 5 & *(green) 6 & *(red)1\\
	*(green)5&*(green) 5 & *(red) 2\\
	*(green)6&*(red) 1\\
	*(red)3 
\end{ytableau}$$ 

To extend the definition of $\cR$ to $\fW_n$, let $w\in \fW_n$ and suppose the entries of $w$ are $v_1< v_2<\cdots< v_k$. Let st($w$) be the word obtained from $w$ by replacing each $v_i$ by $i$, and define $\cR(w)$ to be the word obtained by taking $\cR$(st($w$)) and replacing each $i$ by $v_i$.

Our next theorem asserts that $\cal{R}$ has the desired properties.

 \begin{thm}\label{thm:row shift}
For all $n$, the mapping $\cR: \fW_n \to \fW_n$ is a bijection such that  $w\sim_{cid} \cR(w)$ and 
$$(w\setminus s)^- \sim_{rid} \cR(w)\setminus m,$$
where {s} and $m$ are the smallest and largest letters in $w$, respectively. Here, if $s< v_2<\cdots <v_k$ are the entries in $w$, then $(w\setminus s)^-$ denotes the word obtained from $w\setminus s$ by replacing $v_i$ by $v_{i-1}$ for each $2\leq i\leq k$. 

Finally, the set of integers occurring in $\cR(w)$ is the same as the set of integers occurring in $w$.
\end{thm}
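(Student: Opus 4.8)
The plan is to assemble the theorem from pieces that are already essentially in place, handling the "standardized" case $\fW^{st}_n$ first and then transferring everything to $\fW_n$ by the standardization device in the definition of $\cR$. For the claim about the set of integers occurring in $\cR(w)$: by construction $\cR(w)$ is obtained from $\cR(\mathrm{st}(w))$ by relabeling $i \mapsto v_i$, where $v_1 < \cdots < v_k$ are exactly the entries of $w$. So it suffices to show $\cR(\mathrm{st}(w)) \in \fW^{st}_n$, i.e. that $\cR$ maps $\fW^{st}_n$ into itself, which I would do by verifying that each constituent map preserves the relevant property: $f$ lands in $\bigcup_\lambda(\Inc^{st}(\lambda) \times \Rect_{M_{n-1}}(\sigma_n/\lambda))$ by Lemma~\ref{lem:fus bij} and Remark~\ref{rmk:Inc}; $\partial$ preserves $\Inc^{st}(\lambda)$ (stated in the text, where $\partial$ is asserted to be a bijection on $\Inc^{st}(\lambda)$), so $\partial(A,B)$ stays in the codomain of $f$; and $f^{-1}$ then returns an element of $\fW^{st}_n$. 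Since the set of entries of a word in $\fW^{st}_n$ is an integer interval $[1,\ell]$, and $\partial$ does not change the largest entry of the tableau (it re-inserts it), the length of that interval is preserved; hence $\cR(\mathrm{st}(w))$ has entry set $[1,k]$, and relabeling gives entry set $\{v_1,\dots,v_k\}$, the entry set of $w$. This also immediately gives bijectivity of $\cR$ on $\fW_n$: on $\fW^{st}_n$ it is a composite of bijections, and the passage $w \mapsto \mathrm{st}(w)$ together with the relabeling is a bijection between words in $\fW_n$ with a fixed entry set and $\fW^{st}_k$, compatible with $\cR$.

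For the two equivalences, I would argue on $\fW^{st}_n$ and then note both $\sim_{cid}$ and $\sim_{rid}$ are insensitive to the order-preserving relabeling $i \mapsto v_i$ of all letters (a strictly increasing relabeling changes no $\lis$ or $\lds$ of any subword, and commutes with deleting letters by value), so the statements descend to $\fW_n$ with $s = v_1$ and $(w\setminus s)^-$ as described. For $w \sim_{cid} \cR(w)$: writing $(\cW(w),\cM(w)) = \fus(M_{n-1},w)$, we have $(\cW(\cR w), \cM(\cR w)) = (\partial\cW(w), \cM(w))$ by the very definition of $\cR$ via $f^{-1}\circ\partial\circ f$; in particular $\cM(\cR w) = \cM(w)$, so Theorem~\ref{thm:M implies cid} gives $w \sim_{cid} \cR(w)$. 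That is the cheap half.

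The hard part is $(w\setminus s)^- \sim_{rid} \cR(w)\setminus m$, and I expect this to be the main obstacle. The idea is to relate $\cW(w\setminus s)^-$ (or rather the word $(w\setminus s)^-$, via Theorem~\ref{thm:W implies rid}) to $\partial\cW(w)$, i.e. to show that promotion of the $\cW$-tableau corresponds on the word side to deleting the smallest letter, decrementing, and that the new largest letter records where the promotion "re-inserted." Concretely, I would aim to prove: if $(\cW(w), \cM(w)) = \fus(M_{n-1}, w)$ and $\cR(w) = w'$, then $\cW(w') = \partial\cW(w)$, and moreover $\Read(\partial\cW(w))$ is $K$-Knuth equivalent to $\Read(\cW((w'\setminus m)))$ after decrementing — unwinding the definition of $\partial$ (delete $1$, decrement, rectify by $\fjdt_{\{(1,1)\}}$, refill with the max) and matching it against $\fjdt$ on the skew tableau representing $w'$. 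The key technical tools will be Fact~\ref{fact:Kjdt and Knuth} (so that $\fjdt$ moves stay within a $K$-Knuth class), Fact~\ref{fact:Knuth on subintervals of rows} and Fact~\ref{fact:longest inc and dec} (to convert $K$-Knuth equivalence of the row-restricted words into equality of $\lis$/$\lds$, i.e. $\sim_{rid}$), together with the involutivity of K-infusion (Fact~\ref{fact:infusion involution}) to translate between the word and the pair of tableaux. The delicate point is bookkeeping the "vacated boxes filled with the largest letter" step of $\partial$ and checking it corresponds precisely to the single new largest letter $m$ of $\cR(w)$ sitting in the position that, upon deletion, leaves a word $K$-Knuth equivalent to $(w\setminus s)^-$; I would isolate this as a lemma about how $\fus(M_{n-1}, \cdot)$ interacts with deleting the smallest letter of a word in $\fW^{st}_n$ and with promotion, and then the $\sim_{rid}$ conclusion follows by the same Fact~\ref{fact:Kjdt and Knuth}/Fact~\ref{fact:Knuth on subintervals of rows}/Fact~\ref{fact:longest inc and dec} chain used in the proof of Theorem~\ref{thm:W implies rid}.
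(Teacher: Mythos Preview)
Your plan is correct and matches the paper's proof in all essentials: reduce to $\fW^{st}_n$, get bijectivity from $f^{-1}\circ\partial\circ f$, get $\sim_{cid}$ from $\cM(\cR w)=\cM(w)$ via Theorem~\ref{thm:M implies cid}, and get the $\sim_{rid}$ claim via the chain Fact~\ref{fact:Kjdt and Knuth}~$\to$~Fact~\ref{fact:Knuth on subintervals of rows}~$\to$~Fact~\ref{fact:longest inc and dec}. The only remark is that you are overestimating the difficulty of the ``hard part'': no separate lemma about how $\fus(M_{n-1},\cdot)$ interacts with deletion is needed, because the definition of $\partial$ already gives, directly and in one line, that $(\Read(\cW(w))\setminus 1)^- \equiv_K \Read(\partial\cW(w)\setminus m)$ (the vacated boxes filled with $m$ are exactly what you delete, and the remaining tableau is a $\fjdt$-rectification of the decremented $\cW(w)\setminus 1$), after which Fact~\ref{fact:Knuth on subintervals of rows} applied to $w\equiv_K\Read(\cW(w))$ and $u\equiv_K\Read(\partial\cW(w))$ finishes the chain.
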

\begin{proof} We need only consider $n\geq 2$.  By the definition of $\cR$ on $\fW_n$, it suffices to deal with the restriction of $\cR$ to $\fW_n^{st}$.

We have already shown that this restricted map is a bijection.  Set $u = \cR(w)$.  By our definition of $\cR$ we know $\cM(w) = \cM(u)$  and hence an application of Theorem~\ref{thm:M implies cid} tells us that $w \sim_{cid} u$.

We now turn our attention to the remaining claim, which, in the case under consideration, asserts that $$(w\setminus 1)^- \sim_{rid} \cR(w)\setminus m,$$ where
 $m$ is the largest letter in $w$.  We have 
$$\fus(M_{n-1},w)= (\cW(w),\cM(w)) \qquad \text{and}\qquad \fus(M_{n-1},u) = (\partial \cW(w),\cM(w)).$$  By Fact~\ref{fact:Kjdt and Knuth} we have
$$w\equiv_K \Read(\cW(w))\qquad \text{and}\qquad u \equiv_K \Read(\partial \cW(w)).$$
From the definition of $\partial$ and Fact~\ref{fact:Kjdt and Knuth} it follows that
$$(\Read(\cW(w))\setminus 1)^- \equiv_K \Read(\partial \cW(w)\setminus m).$$   Combining these equivalences and applying Fact~\ref{fact:Knuth on subintervals of rows} gives
$$(w\setminus 1)^- \equiv_K (\Read(\cW(w))\setminus 1)^- \equiv_K \Read(\partial \cW(w)\setminus m) \equiv_K u\setminus m.$$
An application of Facts~\ref{fact:Knuth on subintervals of rows} and \ref{fact:longest inc and dec} now shows that $(w\setminus 1)^- \sim_{rid} \cR(w)\setminus m.$

It is clear from our definition of $\cR$ that the set of integers in $\cR(w)$ is the same as the set of integers in $w$.
\end{proof}

\subsection{Column shifting}
\begin{thm}\label{thm:col shift}
	For every $n$, there exists an explicit bijection $\cC: \fW_n \to \fW_n$ such that if $u = \cC(w)$ then $w\sim_{rid} u$, 
	$$w_2\cdots w_n \sim_{cid} u_1\cdots u_{n-1},$$
	and the set of integers in $u$ is the same as the set of integers in $w$.
\end{thm}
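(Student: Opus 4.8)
The plan is to mirror the construction of $\cR$, but now promoting the tableau $\cW(w)$ that carries RID data while keeping the tableau $\cM(w)$ that carries CID data fixed — dually to what was done in Theorem~\ref{thm:row shift}. More precisely, for $n\geq 2$ I would first work on the subset $\fW_n^{st}$ and use the bijection $f: w\mapsto \fus(M_{n-1},w)$ onto $\bigcup_\lambda \big(\Inc^{st}(\lambda)\times \Rect_{M_{n-1}}(\sigma_n/\lambda)\big)$. Here the obstacle is immediate: promotion $\partial$ acts naturally on the \emph{first} coordinate $\Inc^{st}(\lambda)$, and that is precisely what $\cR$ already exploits to shift RID data down while fixing CID data. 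To shift CID data left while fixing RID data, I need instead an operation acting on the \emph{second} coordinate $\Rect_{M_{n-1}}(\sigma_n/\lambda)$ — the analogue of demoting a recording tableau in \eqref{eq:shift cid}. So the real content of the theorem is the construction of a suitable ``demotion''-type bijection on the set of rectification-class representatives $\Rect_{M_{n-1}}(\sigma_n/\lambda)$, compatible across all $\lambda\subseteq\sigma_n$, whose effect on words is to delete the first letter, decrement, re-standardize, and append a new largest letter at the end — i.e.\ the column-shift on the word side.

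Concretely, I would proceed as follows. First, unwind what the target relation means in terms of K-infusion: if $u=\cC(w)$, I want $\cM(u)=\cM(w)$ fixed (this will give $u\sim_{cid}$-compatibility with $w$ via Theorem~\ref{thm:M implies cid}, but shifted — see below) and $\cW(u)$ obtained from $\cW(w)$ by a controlled move that corresponds to deleting $w_1$ and relabeling. Actually the cleanest route is the reverse: I expect $\cC$ to be defined so that $\cW(w)$ and $\cW(u)$ are related by a \emph{reverse} promotion / an evacuation-type operation, and $\cM(w)=\cM(u)$, so that Theorem~\ref{thm:W implies rid} applied after a K-Knuth manipulation yields $w\sim_{rid}u$. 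For the CID shift, the key identity to establish is that the word obtained by running K-infusion ``one step in from the front'' — removing the box $(n,1)$ of $\sigma_n$, which holds $w_1$, rectifying the remainder, and recording — produces exactly $w_2\cdots w_n$ up to the standard identification, and that appending a new outer corner with the largest letter inverts this. This is where Fact~\ref{fact:min is URT} (every $M_k$ is a URT) is essential: it guarantees that the smaller staircase rectification is well-defined and that the map $\fus(M_{n-1},\cdot)$ restricted appropriately is still a bijection, so the inverse $\cC$ is well-defined and bijective.

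The key steps, in order: (1) set up the analogue of $\partial$ acting on $\Rect_{M_{n-1}}(\sigma_n/\lambda)$ — call it $\overline\partial$ — defined by a jeu-de-taquin / evacuation move, and verify via Lemma~\ref{lem:fus bij} and Remark~\ref{rmk:Inc} that $(f^{-1}\circ(\mathrm{id}\times\overline\partial)\circ f)$ is a bijection $\fW_n^{st}\to\fW_n^{st}$; (2) using Fact~\ref{fact:Kjdt and Knuth} (K-jdt equivalence $=$ K-Knuth equivalence) together with Facts~\ref{fact:Knuth on subintervals of rows} and~\ref{fact:longest inc and dec}, show that fixing $\cM(w)$ forces $w\sim_{rid}u$ — this is the analogue of the $(w\setminus 1)^-\sim_{rid}\cR(w)\setminus m$ argument in Theorem~\ref{thm:row shift}, but now it is the $\cM$ coordinate that is held fixed, so I need the dual statement that holding the second coordinate fixed and promoting the first preserves RID data; (3) establish the CID shift $w_2\cdots w_n\sim_{cid}u_1\cdots u_{n-1}$ by showing that the $\cW$-coordinate move exactly implements ``delete first column, rectify, append'' on the associated word, then invoke Theorem~\ref{thm:M implies cid} on the truncated words — this requires checking that $\cM$ of the length-$(n-1)$ word $w_2\cdots w_n$ agrees with $\cM$ of $u_1\cdots u_{n-1}$, which should follow from a commutation between restricting the staircase and the infusion operator; (4) finally extend from $\fW_n^{st}$ to $\fW_n$ by the usual standardization wrapper, as in the last paragraph before Theorem~\ref{thm:row shift}, and observe that standardization does not change $\lis$ or $\lds$ of any subword, so all the $\sim_{rid}$ and $\sim_{cid}$ relations descend, and the set of integers is visibly preserved.

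I expect step (3) to be the main obstacle, and it is presumably why the paper flags $\cC$ as requiring ``new, and considerably more involved, methods'': unlike the row-shift case, where the $\partial$-on-$\cW$ move translates transparently into the word operation $w\mapsto(w\setminus 1)^-$ via reading words and Fact~\ref{fact:Kjdt and Knuth}, here the operation lives on the $\Rect_{M_{n-1}}$ side and its word-level meaning is not read off a reading word. Making precise the claim that this move corresponds to ``chop the first column off the staircase, rectify, and glue on a new outer corner'' — and that this is compatible with $\cM$ of the truncated word — is exactly the kind of statement that needs the new variants (iRSK, i-Knuth equivalence) developed later; I would cite or develop those tools at this point. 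Everything else (bijectivity, the $\sim_{rid}$ preservation, the standardization reduction) is routine once the framework of Section~2 is in hand.
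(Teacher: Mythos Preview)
Your high-level instinct --- keep the $\cW$-side fixed so that $w\sim_{rid}u$ comes for free from Theorem~\ref{thm:W implies rid}, and do something to the $\cM$-side to effect the column shift --- is on target, but the concrete mechanism you propose is not what the paper does, and as stated it has a genuine gap. Your proposed $\overline\partial$ on $\Rect_{M_{n-1}}(\sigma_n/\lambda)$ is never actually defined: unlike $\Inc^{st}(\lambda)$, this set of skew tableaux has no evident ``delete smallest entry, slide, append largest'' operation, and nothing in Section~\ref{sec:K-tools} supplies one. More seriously, even granting some such operation, you give no mechanism linking it to $\cM(w_2\cdots w_n)$: the single tableau $\cM(w)$ does not have a visible ``first column'' whose removal yields $\cM(w_2\cdots w_n)$, so your step~(3) is not just the main obstacle --- it is the entire content of the theorem, and your proposal does not address it. (Aside: in your step~(2) you write ``fixing $\cM(w)$ forces $w\sim_{rid}u$'', but fixing $\cM$ gives $\sim_{cid}$; you mean fixing $\cW$.)

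The paper's idea is quite different and worth knowing. Rather than work with $M_{n-1}$ at all, it replaces $M_{n-1}$ by the tableau $N_{n-1}$ obtained by gluing the single column $I_{n-1}$ to the left of $M_{n-2}+(n-1)$. Infusing $w$ against $N_{n-1}$ then factors into a \emph{triple} $G_v(w)=(W,V,M)$, where $W$ is a rectification of $w$, $V\in\Rect_{I_{n-1}}(\mu/\lambda)$ is a ``vertical $n$-strip'', and --- this is the point --- $M=\cM(w_2\cdots w_n)$ already, by construction (Lemma~\ref{lem:G_v}). A companion map $G_h$, built with $I_{n-1}^*$ in place of $I_{n-1}$, produces triples $(W,H,M)$ with $M=\cM(w_1\cdots w_{n-1})$ (Lemma~\ref{lem:G_h}). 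The bijection $\cC$ is then $G_h^{-1}\circ(\mathrm{id},\varphi,\text{shift})\circ G_v$, where $\varphi$ is an explicit combinatorial bijection between vertical and horizontal $n$-strips (Lemmas~\ref{lem:rect is vert}--\ref{lem:Vert to Horiz}) and ``shift'' translates $M$ one unit down and left to land on the correct shape $\sigma_n/\tilde\mu$. No promotion and no standardization wrapper are used: $\cC$ is defined directly on all of $\fW_n$ for $n\ge 3$ (and is the identity for $n\le 2$), so your step~(4) is unnecessary. The iRSK machinery is used only inside the proof of Theorem~\ref{thm:M implies cid}, which is then invoked as a black box; it plays no role in constructing $\cC$ itself.
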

 
 When $n=1,2$, we take $\cC$ to be the identity map.  For the remainder of this subsection assume $n\geq 3$.

To define our map $\cC$ we first make a few definitions.  Let $I_n\in \Inc(1^n)$ be the increasing tableau with entries $1,\ldots, n$.  Note that $I_n$ is a URT by Fact~\ref{fact:min is URT}. Next, define $N_n\in \Inc(\sigma_n)$ to be the increasing tableau obtained by concatenating $I_n$ with $(M_{n-1}+n)$, the result of incrementing all the entries of $M_{n-1}$ by $n$.   For example, $N_4$ is 
$$\begin{ytableau}
	*(yellow)1&*(green) 5 & *(green) 6 & *(green)7\\
	*(yellow)2&*(green) 6 & *(green) 7\\
	*(yellow)3&*(green) 7 \\
	*(yellow)4
\end{ytableau}\ .$$
We remark that since $I_n$ and $M_n$ are both URTs so too is $N_n$. 

To motivate our construction of $\cC$ we illustrate its calculation for the word $w = 4231142\in \fW_7$.  First we have
\begin{equation}\label{eq:map1}
    \begin{ytableau}
	*(yellow)1&*(green) 7 & *(green) 8 & *(green)9 & *(green)10& *(green) 11 & *(red) 2 \\
	*(yellow)2&*(green) 8 & *(green) 9 & *(green)10 & *(green)11& *(red) 4\\
	*(yellow)3&*(green) 9 & *(green) 10 & *(green)11 & *(red)1\\
	*(yellow)4&*(green) 10 & *(green) 11 & *(red)1\\
	*(yellow)5&*(green) 11 & *(red) 3\\
	*(yellow)6&*(red) 2\\
	*(red)4 
\end{ytableau} 
\quad\raisebox{-2em}{$\longrightarrow$}\quad
\begin{ytableau}
	*(red) 1 & *(red) 2 & *(red)4&*(yellow)1 & *(green)8 & *(green) 9 & *(green) 11 \\
	*(red) 2 & *(red) 3&*(yellow) 2 & *(green)7 & *(green)9 & *(green) 10\\
	*(red)4&*(green) 7 & *(green)8 & *(green)10 & *(green)11\\
	*(yellow)3&*(green) 8 & *(green) 9 & *(green)11\\
	*(yellow)4&*(green) 9 & *(green) 10\\
	*(yellow)5&*(green) 11\\
	*(yellow)6 
\end{ytableau}
\end{equation}
where the tableau on the left shows the pair $(N_6,w)$  and the tableau on the right  shows $\fus(N_6,w)$. 

Now we delete the ``vertical strip" consisting of all the yellow boxes and slide the green boxes down and left one unit.  Doing so will create a ``horizontal strip" of empty boxes that we color yellow and label from 1 to 6 as shown below on the left.  
\begin{align}\label{eq:map2}
\begin{ytableau}
	*(red) 1 & *(red) 2 & *(red)4&*(yellow) 3 & *(yellow)4 & *(yellow) 5 & *(yellow) 6 \\
	*(red) 2 & *(red) 3&*(yellow) 2 & *(green)8 & *(green)9 & *(green) 11\\
	*(red)4&*(yellow) 1 & *(green)7 & *(green)9 & *(green)10\\
	*(green)7&*(green) 8 & *(green) 10 & *(green)11\\
	*(green)8&*(green) 9 & *(green) 11\\
	*(green)9 &*(green)10\\
	*(green)11 
\end{ytableau}
\quad\raisebox{-2em}{$\longrightarrow$}\quad
\begin{ytableau}
	*(yellow) 1 & *(yellow) 2 & *(yellow)3 &*(yellow) 4 & *(yellow)5 & *(yellow) 6 & *(red) 2 \\
	*(green) 7 & *(green) 8&*(green) 9 & *(green)10 & *(green)11 & *(red) 3\\
	*(green)8 &*(green) 9 & *(green) 10 & *(green)11 & *(red)4\\
	*(green)9 &*(green) 10 & *(green) 11 & *(red)1\\
	*(green)10&*(green) 11 & *(red) 1\\
	*(green)1 &*(red) 3\\
	*(red)2 
\end{ytableau}
\end{align}
If we let $X$ denote the tableau of yellow and green boxes and $W$ denote the tableau of red boxes,  then the last figure shows $\fus(W,X)$.  Our desired word $\cC(w)$ is given by the red entries on the diagonal and so $\cC(w) = 2311432$.

Motivated by the map in \eqref{eq:map1}, we have the following lemma.

\begin{lemma}\label{lem:G_v}
    For $n\geq 3$, we have an explicit bijection 
    $$G_v: \fW_n \to \bigcup_{\mu,\lambda} \Big(\Inc(\lambda) \times \Rect_{I_{n-1}}(\mu / \lambda)\times \Rect_{M_{n-2}}(\sigma_n / \mu)\Big)$$ 
	where the union is over all partitions $\lambda\subseteq\mu\subseteq \sigma_{n}$ such that $\mu$ has length $n$.  Moreover, if $w\mapsto (W,V,M)$ then $w\equiv_K \Read(W)$ and  $M=\cM(w_2\cdots w_n)$.
\end{lemma}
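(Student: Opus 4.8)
The plan is to build $G_v$ as a composite of two applications of the K-infusion bijection of Lemma~\ref{lem:fus bij} (in the $Inc^{st}$-version of Remark~\ref{rmk:Inc}, applied after standardization, since a general word requires the mild bookkeeping of passing to $\mathrm{st}(w)$ and back), mirroring the two-step construction illustrated in the worked example at \eqref{eq:map1}. First I would identify $\fW_n$ with $\Inc(\sigma_n/\sigma_{n-1})$ exactly as in Section~\ref{subsec:k-infusion}, and then observe that $N_{n-1}$ is a URT of shape $\sigma_{n-1}$ (as noted in the text, because $I_{n-1}$ and $M_{n-2}$ are URTs, and concatenating URTs in this staircase fashion gives a URT). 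Applying Lemma~\ref{lem:fus bij} with $\tau=\sigma_{n-1}$, $\nu=\sigma_n$, $T=N_{n-1}$ yields a bijection
$$\fus(N_{n-1},\cdot):\ \fW_n \ \longrightarrow\ \bigcup_{\lambda'\subseteq\sigma_n}\Big(\Inc(\lambda')\times \Rect_{N_{n-1}}(\sigma_n/\lambda')\Big).$$
This is the first arrow in \eqref{eq:map1}: write $\fus(N_{n-1},w)=(W,R)$, where $W\in\Inc(\lambda')$ and $R$ rectifies to $N_{n-1}$.

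The second step is to decompose the rectification target: since $N_{n-1}=I_{n-1}\ast(M_{n-2}+ (n-1))$, a tableau $R$ of skew shape $\sigma_n/\lambda'$ that rectifies to $N_{n-1}$ should correspond, via K-infusion with the vertical strip $I_{n-1}$, to a pair consisting of a tableau in $\Rect_{I_{n-1}}(\mu/\lambda')$ and a tableau in $\Rect_{M_{n-2}}(\sigma_n/\mu)$ for an intermediate shape $\mu$ with $\lambda'\subseteq\mu\subseteq\sigma_n$; the constraint that $\mu$ has length $n$ comes from the fact that $I_{n-1}$ occupies a full column of height $n-1$ inside $N_{n-1}$, forcing $\mu$ to contain the first column of $\sigma_n$. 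Concretely I would apply Lemma~\ref{lem:fus bij} once more, this time with the URT $I_{n-1}$ of shape $1^{n-1}$, to the tableau $R$ relative to its inner shape, splitting off the $I_{n-1}$-part and leaving an $M_{n-2}$-rectifying remainder. Composing, $G_v$ sends $w\mapsto(W,V,M)$ and is a bijection onto the claimed union because it is a composite of bijections; the shape bookkeeping (that the middle index ranges over $\mu$ of length $n$ with $\lambda\subseteq\mu\subseteq\sigma_n$) is read off from Fact~\ref{fact:infusion shape} applied twice.

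For the two auxiliary assertions: $w\equiv_K \Read(W)$ follows immediately from Fact~\ref{fact:Kjdt and Knuth}, exactly as in the proof of Theorem~\ref{thm:W implies rid}, since $W$ is obtained from the tableau of $w$ by a sequence of $\fjdt$ moves. The identity $M=\cM(w_2\cdots w_n)$ is the substantive point: here one must see that peeling off the first column $I_{n-1}$ of $N_{n-1}$ during K-infusion, followed by the downward slide pictured in \eqref{eq:map2}, reproduces precisely the computation of $\fus(M_{n-2},w_2\cdots w_n)=(\cW(w_2\cdots w_n),\cM(w_2\cdots w_n))$ on the length-$(n-1)$ word $w_2\cdots w_n$. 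I would argue this by tracking the effect of $K$-infusion with $N_{n-1}=I_{n-1}\ast (M_{n-2}+(n-1))$ one block at a time — using the recursive definition of $\fjdt_T$ and the involutivity of K-infusion (Fact~\ref{fact:infusion involution}) — to show that after the $I_{n-1}$-block has been infused, the remaining data is exactly the pair $(N_{n-2}\text{-type remainder})$ attached to the word $w_2\cdots w_n$, and that the $M_{n-2}$-infusion step then returns $\cM(w_2\cdots w_n)$ as its second coordinate. This last bookkeeping — matching the block-by-block infusion against the definition of $\cM$ on the truncated word — is where essentially all the work lies, and is the main obstacle; the rest is an assembly of already-established bijections.
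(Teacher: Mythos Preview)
Your overall plan matches the paper's: apply $\fus(N_{n-1},\cdot)$ and then decompose the second component. But two points in your execution are off, and one of them is the crux of the lemma.

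First, the decomposition of $R\in\Rect_{N_{n-1}}(\sigma_n/\lambda)$ into $(V,M)$ is much simpler than you suggest. There is no second application of Lemma~\ref{lem:fus bij}; indeed your proposed ``$\fus(I_{n-1},\cdot)$'' does not typecheck, since $R$ has inner shape $\lambda$, not $1^{n-1}$. The paper simply splits $R$ by entry values: the boxes of $R$ with entries in $[1,n-1]$ form $V$, and the boxes with entries $\geq n$, decremented by $n-1$, form $M$. That $V\in\Rect_{I_{n-1}}(\mu/\lambda)$ and $M\in\Rect_{M_{n-2}}(\sigma_n/\mu)$ is then immediate from the corresponding decomposition $N_{n-1}=I_{n-1}\cup(M_{n-2}+(n-1))$ and the fact that K-jdt respects restriction to entry intervals. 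The reference to the slide in \eqref{eq:map2} is a red herring here; that shift is part of the later construction of $\cC$, not of $G_v$. Likewise, no standardization or passage to $\Inc^{st}$ is needed for this lemma.

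Second, and more importantly, your argument for $M=\cM(w_2\cdots w_n)$ has the block order reversed, and this is exactly where the content lies. In the computation of $\fjdt_{N_{n-1}}(w)$ one slides through the entries of $N_{n-1}$ from largest to smallest, so the $(M_{n-2}+(n-1))$-block is processed \emph{first}, and only afterward the $I_{n-1}$-block. During that first phase the inner corners all lie in columns $\geq 2$ (since $M_{n-2}+(n-1)$ occupies columns $2,\dots,n-1$), so the box containing $w_1$ is never touched; the slides in columns $\geq 2$ are then literally, up to a one-column shift, the computation of $\fus(M_{n-2},w_2\cdots w_n)$. Hence the final positions of the entries $\geq n$ in $\rjdt_w(N_{n-1})$ give $\cM(w_2\cdots w_n)+(n-1)$, i.e.\ $M=\cM(w_2\cdots w_n)$. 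Your version --- infuse $I_{n-1}$ first, then claim the remainder is ``$w_2\cdots w_n$ against $M_{n-2}$'' --- is false as stated: once the small entries have been slid through, the word has already been scrambled and is no longer $w_2\cdots w_n$. Reversing the order to ``large entries first'' makes the identity immediate, and that is the paper's one-line justification.
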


\begin{proof}
By Lemma~\ref{lem:fus bij} we have the bijection
$$\fus(N_{n-1},\cdot): \fW_n \to \bigcup_{\lambda} \Big(\Inc(\lambda) \times \Rect_{N_{n-1}}(\sigma_n / \lambda)\Big),$$
where the union is over all $\lambda\subseteq \sigma_n$. Observe that any $T\in  \Rect_{N_{n-1}}(\sigma_n / \lambda)$ corresponds to a pair 
$(V,M)$,
where the entries $1,\ldots, n-1$ in $T$ determine $V$ and the remaining entries in $T$, decremented by $n$, determine $M$.  From the definitions involved we see that  $V \in \Rect_{I_{n-1}}(\mu/\lambda)$ and $M \in \Rect_{M_{n-1}}(\sigma_n/\mu)$ for some $\mu\supseteq\lambda$.  This mapping gives us the correspondence
$$\Rect_{N_{n-1}}(\sigma_n / \lambda) \to \bigcup_{\mu}\Big(\Rect_{I_{n-1}}(\mu / \lambda)\times \Rect_{M_{n-2}}(\sigma_n / \mu)\Big)$$
where the union is over all $\mu\subseteq \sigma_n$ of length $n$ containing $\lambda$. Note that $\mu$ must have length $n$ because, in the calculation of $\fus(N_{n-1},w)$, no entry from $M_{n-2}+n-1$ can move to the bottom box of $\sigma_n$, so this box must be filled by an entry from either $w$ or $I_{n-1}$, i.e., this box is in either $\lambda$ or $V=\mu/\lambda$, so it is in $\mu$.

To see that $G_v$ has the desired properties, note that, by our construction, the diagonal tableau representing $w$ is K-jdt equivalent to $W$, so by Fact~\ref{fact:Kjdt and Knuth}, $w\equiv_K \Read(W).$ Furthermore, our construction implies that if $\fus(M_{n-2}, w_2\cdots w_n) = (W',M')$ then $M'= M$ and so the last claim follows.
\end{proof}

We also have the following analogous result, where $*$ denotes conjugation.
\begin{lemma}\label{lem:G_h}
    For $n\geq 3$, we have an explicit bijection 
    $$G_h: \fW_n \to \bigcup_{\mu,\lambda} \Big(\Inc(\lambda) \times \Rect_{I^*_{n-1}}(\mu / \lambda)\times \Rect_{M_{n-2}}(\sigma_n / \mu)\Big)$$ 
	where the union is over all partitions $\lambda\subseteq\mu\subseteq \sigma_{n}$ such that $\mu$ has width $n$.  Moreover, if $w\mapsto (W,H,M)$ then $M= \cM(w_1\cdots w_{n-1})$, and  $w$ and $W$ are K-jdt equivalent.
\end{lemma}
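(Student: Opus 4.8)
The plan is to mirror the proof of Lemma~\ref{lem:G_v} almost verbatim, replacing every "vertical strip / row" consideration by its conjugate "horizontal strip / column" analogue, and replacing the tableau $I_{n-1}$ (a single column) by its conjugate $I^*_{n-1}$ (a single row). First I would define $N^*_n\in\Inc(\sigma_n)$ to be the increasing tableau obtained by stacking $I^*_n$ (the single-row tableau with entries $1,\dots,n$) on top of $(M_{n-1}+n)$, so that $N^*_n$ is the conjugate of $N_n$; since $I^*_n$ and $M_{n-1}$ are URTs and URT-ness is preserved under conjugation (equivalently, by Fact~\ref{fact:min is URT} together with the fact that the conjugate of a minimal-type tableau is again a URT), $N^*_{n-1}$ is a URT. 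Then I would apply Lemma~\ref{lem:fus bij} with $\tau=\emptyset$, $\nu=\sigma_n$, $T=N^*_{n-1}$ to get the bijection
$$\fus(N^*_{n-1},\cdot):\fW_n\to\bigcup_\lambda\Big(\Inc(\lambda)\times\Rect_{N^*_{n-1}}(\sigma_n/\lambda)\Big).$$

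Next I would decompose each $T\in\Rect_{N^*_{n-1}}(\sigma_n/\lambda)$ into a pair $(H,M)$, where the boxes of $T$ containing the entries $1,\dots,n-1$ form $H$ and the remaining boxes, with entries decremented by $n-1$, form $M$; as in the proof of Lemma~\ref{lem:G_v}, tracking the K-jdt moves shows $H\in\Rect_{I^*_{n-1}}(\mu/\lambda)$ and $M\in\Rect_{M_{n-2}}(\sigma_n/\mu)$ for the partition $\mu$ consisting of $\lambda$ together with the support of $H$. The constraint that $\mu$ has width $n$ (rather than length $n$) is the conjugate of the length-$n$ constraint in Lemma~\ref{lem:G_v}: in computing $\fus(N^*_{n-1},w)$, no entry of $M_{n-2}+n-1$ can reach the rightmost box of the first row of $\sigma_n$, so that box must be filled by an entry of $w$ or of $I^*_{n-1}$, hence lies in $\mu$, forcing $\mu_1=n$. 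Composing these two bijections gives $G_h$ with the stated codomain. For the "moreover" clause: the diagonal tableau representing $w$ is built from $w$ by K-jdt moves exactly as in Lemma~\ref{lem:G_v} (the $\fus$ procedure only performs $\fjdt$/$\rjdt$), so $w$ and $W$ are K-jdt equivalent; and the recursive structure of $\fus$ shows that if $\fus(M_{n-2},w_1\cdots w_{n-1})=(W',M')$ then $M'=M$, giving $M=\cM(w_1\cdots w_{n-1})$.

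The one genuine point requiring care — and the step I expect to be the main obstacle — is the bookkeeping that identifies exactly which subword of $w$ is recorded in $M$. In Lemma~\ref{lem:G_v} the single-column tableau $I_{n-1}$ occupies the first column, and infusing it past $w$ leaves the suffix $w_2\cdots w_n$ interacting with $M_{n-2}$, so $M=\cM(w_2\cdots w_n)$. Conjugating, the single-row tableau $I^*_{n-1}$ occupies the first row, and the analogous analysis must show it is the \emph{prefix} $w_1\cdots w_{n-1}$ that survives to interact with $M_{n-2}+n-1$; this amounts to checking that under conjugation the word associated to the anti-diagonal strip of $\sigma_n/\sigma_{n-1}$ gets reversed and re-indexed correctly, so that "delete the last letter" becomes "delete the first letter." I would verify this by unwinding the identification of $\fW_n$ with $\Inc(\sigma_n/\sigma_{n-1})$ (box $i\mapsto(n+1-i,i)$) under conjugation, which sends box $(n+1-i,i)$ to $(i,n+1-i)$, i.e. reverses the reading order of the diagonal, together with Fact~\ref{fact:Kjdt and Knuth} to transfer the equivalence statements; everything else is the routine conjugate of the already-written proof.
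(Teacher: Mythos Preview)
Your direct approach is correct and in fact produces the same map $G_h$ as the paper does, but the paper arrives there by a shorter route. Rather than rerun the proof of Lemma~\ref{lem:G_v} with $N^*_{n-1}$ in place of $N_{n-1}$, the paper simply \emph{defines}
\[
G_h(w):=(W^*,\,V^*,\,M^*)\qquad\text{where }G_v(w^r)=(W,V,M),
\]
observing that conjugation carries the codomain of $G_v$ bijectively onto the stated codomain of $G_h$. The two constructions coincide because the diagonal tableau representing $w^r$ is the conjugate of the diagonal tableau representing $w$, and K-jeu de taquin (hence $\fus$) commutes with conjugation, so $\fus(N_{n-1},w^r)=\bigl(\fus(N^*_{n-1},w)\bigr)^*$.

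The payoff of the paper's route is that nothing has to be reverified. Bijectivity is immediate from that of $G_v$, and the ``moreover'' clauses each reduce to a one-line deduction: $w^r$ is K-jdt equivalent to $W$ by Lemma~\ref{lem:G_v}, hence $w$ is K-jdt equivalent to $W^*$; and $M=\cM(w^r_2\cdots w^r_n)$ by Lemma~\ref{lem:G_v}, so using $(w^r_2\cdots w^r_n)^r=w_1\cdots w_{n-1}$ together with $\cM(u^r)=\cM(u)^*$ gives $M^*=\cM(w_1\cdots w_{n-1})$. Your approach buys a self-contained argument at the cost of rechecking the URT property of $N^*_{n-1}$, the width-$n$ constraint on $\mu$, and the subword identification---precisely the ``bookkeeping'' step you flagged as the main obstacle, which the paper sidesteps entirely. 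One small slip: in your appeal to Lemma~\ref{lem:fus bij} you want $\tau=\sigma_{n-1}$, not $\tau=\emptyset$.
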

\begin{proof}
First observe that conjugation gives a bijection between the codomain of $G_v$ and that of $G_h$.  Using this we define
$$G_h(w) = (W^*,V^*,M^*)$$
where $G_v(w^r) = (W,V,M)$ and $w^r$ is the reverse of $w$. It follows that $G_h$ is bijective and $\mu$ has width $n$.

We know $w^r$ and $W$ are K-jdt equivalent by Lemma~\ref{lem:G_v} and so $w$ and $W^*$ are K-jdt equivalent too. Additionally, since $(w^r_2\ldots w^r_n)^r = w_1\ldots w_{n-1}$
we see that 
$$\cM(w_1\ldots w_{n-1})  =\cM((w^r_2\ldots w^r_n)^r)= M^*.$$ 
\end{proof}

Our next definitions are inspired by the yellow boxes \eqref{eq:map2}.  

\begin{definitions}
    For partitions $\lambda \subseteq \mu\subseteq \sigma_n$ we say the skew shape $\mu/ \lambda$ is a \emph{vertical $n$-strip} provided that $\mu$ has length $n,$ $|\mu/ \lambda|$ is $n$ or $n-1,$ and $\mu_i-1\leq \lambda_i$ for all $i$. For such a vertical $n$-strip we define $\Vert_n(\mu/ \lambda)$ to be the set of increasing tableaux of shape $\mu/ \lambda$ with entries 1 through $n-1$ that weakly increase from top to bottom.  
    
    We say the skew shape $\mu/ \lambda$ is a \emph{horizontal $n$-strip} provided that  $\mu_1=n,$ $|\mu/  \lambda|$ is $n$ or $n-1,$ and $\mu_{i+1}\leq \lambda_i$ for all $i$. (The last condition says that no column can contain two boxes of $\mu/\lambda$.) For such a horizontal $n$-strip, we define $\Horiz_n(\mu/ \lambda)$ to be the set of increasing tableaux of shape $\mu/ \lambda$ with entries 1 through $n-1$ that weakly increase from left to right.
\end{definitions}

For example, if $\lambda = (3^2,2^2)$ and $\mu = (4^2, 3^2, 1^3)$ then $\Vert_n(\mu/ \lambda)$ consists precisely of the  two skew tableaux (shown in yellow):
\ytableausetup{mathmode, boxsize=1em}
$$\begin{ytableau}
	{} &  & & *(yellow) 1\\
	&  & & *(yellow)2\\
	&  & *(yellow)2\\
	&  & *(yellow)3\\
	*(yellow)4\\
	*(yellow)5\\
	*(yellow)6
\end{ytableau}\quad 
\begin{ytableau}
	{}   &  & & *(yellow) 1\\
	 &  & & *(yellow)2\\
	 && *(yellow)3\\
	 && *(yellow)4\\
	*(yellow)4\\
	*(yellow)5\\
	*(yellow)6
\end{ytableau}\ .$$
On the other hand, if $\lambda = (3^2,2)$ and $\mu = (4^2, 2, 1^3)$ then $\Vert_n(\mu/ \lambda)$ consists of exactly one skew tableau shown below:
$$\begin{ytableau}
	{}   &  & & *(yellow) 1\\
	   &  & & *(yellow)2\\
	  &\\
	*(yellow)3\\
	*(yellow)4\\
	*(yellow)5
\end{ytableau}\ .$$
Observe that our motivating example illustrated a vertical $n$-strip with no repeated numbers. In general, at most one repetition can occur.

\begin{lemma}\label{lem:rect is vert}
    Fix $\lambda\subseteq \mu \subseteq \sigma_n$.  If $\mu$ has length $n$ we have $$\Rect_{I_{n-1}}(\mu / \lambda) = \Vert_n(\mu / \lambda)$$
and if $\mu$ has width $n$ we have 
    $$\Rect_{I^*_{n-1}}(\mu / \lambda) = \Horiz_n(\mu / \lambda).$$
\end{lemma}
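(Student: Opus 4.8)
The plan is to prove the first identity directly, using reading words together with K-Knuth equivalence, and then to deduce the second from it by conjugation. Set $\delta := (n-1)(n-2)\cdots 2\,1$, and recall that $I_{n-1}$ is the minimal tableau of shape $(1^{n-1})$, hence a URT by Fact~\ref{fact:min is URT}, with reading word $\Read(I_{n-1})=\delta$. The key preliminary step is to describe the K-Knuth class of $\delta$: \emph{a word is K-Knuth equivalent to $\delta$ if and only if it is weakly decreasing and its set of letters is exactly $\{1,\ldots,n-1\}$.} For ``only if'', if $w\equiv_K\delta$ then Fact~\ref{fact:longest inc and dec} gives $\lis(w)=1$, so $w$ is weakly decreasing, and $\lds(w)=n-1$; since none of the four K-Knuth moves alters the set of letters of a word, that set must be $\{1,\ldots,n-1\}$. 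For ``if'', in a weakly decreasing word with letter set $\{1,\ldots,n-1\}$ equal letters are consecutive, so iterating the move $\cdots a a \cdots \leftrightarrow \cdots a\cdots$ collapses the word to $\delta$.

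Next I would prove $\Rect_{I_{n-1}}(\mu/\lambda)\subseteq\Vert_n(\mu/\lambda)$. Let $U\in\Rect_{I_{n-1}}(\mu/\lambda)$. Since $U$ rectifies to $I_{n-1}$ it is K-jeu de taquin equivalent to $I_{n-1}$, hence K-Knuth equivalent to it by Fact~\ref{fact:Kjdt and Knuth}, so $\Read(U)\equiv_K\delta$ and the description above applies to $\Read(U)$. The remaining work is to translate this into geometry. If some row of $U$ held two boxes, row-strictness would create an ascent between two consecutive letters of $\Read(U)$, contradicting weak decrease; so $\mu/\lambda$ has at most one box per row. Since $\mu$ has length $n$ we get $|\mu/\lambda|\le n$, and since $\Read(U)$ uses every value in $\{1,\ldots,n-1\}$ we get $|\mu/\lambda|\ge n-1$; combined with the previous sentence this is exactly the statement that $\mu/\lambda$ is a vertical $n$-strip. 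Finally, for a vertical strip the reading word records one entry per occupied row, read from the bottom occupied row upward --- here one uses the elementary fact that a box in a lower row of a vertical strip lies in a weakly smaller column --- so ``$\Read(U)$ weakly decreasing'' is precisely ``the entries of $U$ weakly increase from top to bottom'', and therefore $U\in\Vert_n(\mu/\lambda)$.

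For the reverse inclusion, take $U\in\Vert_n(\mu/\lambda)$; then $\mu/\lambda$ is a vertical $n$-strip and, by the same reading-word translation, $\Read(U)$ is weakly decreasing with letter set $\{1,\ldots,n-1\}$, so $\Read(U)\equiv_K\delta$. Choose any $S\in\Inc(\lambda)$ (e.g.\ the minimal tableau of $\lambda$) and set $R := \fjdt_S(U)$, an increasing tableau of straight shape. Then $R$ is obtained from $U$ by a sequence of K-jeu de taquin slides, hence is K-Knuth equivalent to $U$ by Fact~\ref{fact:Kjdt and Knuth}, so $\Read(R)\equiv_K\delta$ and $\Read(R)$ is weakly decreasing with letter set $\{1,\ldots,n-1\}$. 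Applying the ``no two boxes in a row'' argument again, $R$ has at most one box per row; being of straight shape, $R$ has shape $(1^{k})$; being column-strict with entry set $\{1,\ldots,n-1\}$, it must have $k=n-1$ and $R=I_{n-1}$. Hence $U$ rectifies to $I_{n-1}$, i.e.\ $U\in\Rect_{I_{n-1}}(\mu/\lambda)$. (If $\mu/\lambda$ is not a vertical $n$-strip, the first inclusion shows $\Rect_{I_{n-1}}(\mu/\lambda)=\varnothing$, matching the convention $\Vert_n(\mu/\lambda)=\varnothing$.)

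Finally, for the horizontal case I would argue by conjugation: $I^*_{n-1}$ is the minimal tableau of the one-row shape $(n-1)$, hence a URT; transposition is compatible with K-jeu de taquin, sends partitions of length $n$ to partitions of width $n$, vertical $n$-strips to horizontal $n$-strips, and carries $\Vert_n(\mu^*/\lambda^*)$ bijectively onto $\Horiz_n(\mu/\lambda)$, so transposing the already-proved identity gives $\Rect_{I^*_{n-1}}(\mu/\lambda)=\Horiz_n(\mu/\lambda)$. (Alternatively one can rerun the three steps above verbatim, replacing ``row'' by ``column'', ``top to bottom'' by ``left to right'', $\delta$ by $1\,2\cdots(n-1)$, and ``weakly decreasing'' by ``weakly increasing'', using that in a horizontal strip the reading word visits the occupied columns from left to right because a box in a higher row of a horizontal strip lies in a larger column.) I expect the only genuinely delicate point to be the dictionary between reading words and the geometry of $\mu/\lambda$ used in the two inclusions --- namely, that a vertical (resp.\ horizontal) strip's reading word lists its entries in row order from the bottom (resp.\ column order from the left), and that the number of boxes of $\mu/\lambda$ is forced to lie in $\{n-1,n\}$; everything else is formal manipulation of the cited facts.
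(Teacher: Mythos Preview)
Your proof is correct, but it takes a genuinely different route from the paper's. The paper argues directly on the level of K-jeu de taquin slides: starting from an element of $\Vert_n(\mu/\lambda)$, it tracks a rectification sequence $T_0,\ldots,T_m$ and shows inductively that each $T_i$ still has at most one box per row, entries $1,\ldots,n-1$, and entries weakly increasing top to bottom (by analyzing the local picture $\begin{smallmatrix}\bullet & a\\ b & \end{smallmatrix}$ with $a\le b$), forcing $T_m=I_{n-1}$; the reverse inclusion runs the same inductive step backward through reverse slides. Your argument instead lifts everything to the K-Knuth level: you characterize the K-Knuth class of $\Read(I_{n-1})=(n-1)\cdots 21$ as the weakly decreasing words on $\{1,\ldots,n-1\}$ (using Fact~\ref{fact:longest inc and dec} and invariance of the letter set under K-Knuth moves), invoke Fact~\ref{fact:Kjdt and Knuth} to pass between rectification and reading words, and then translate ``weakly decreasing reading word with letter set $\{1,\ldots,n-1\}$'' into the geometric conditions defining $\Vert_n$.

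Each approach has its merits. The paper's argument is elementary and self-contained: it never leaves the combinatorics of slides and does not rely on the (nontrivial) Facts~\ref{fact:longest inc and dec} and~\ref{fact:Kjdt and Knuth}. Your argument is more conceptual and shorter once those facts are in hand, and it makes transparent \emph{why} the lemma holds: $\Vert_n(\mu/\lambda)$ is exactly the set of increasing tableaux on $\mu/\lambda$ whose reading word lies in the K-Knuth class of $I_{n-1}$. One small remark: given the paper's row-by-row reading convention, your parenthetical about boxes of a vertical strip lying in weakly smaller columns as one goes down is unnecessary (though true); the reading word of a one-box-per-row skew tableau already lists the entries from the bottom row to the top regardless of column position. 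Both proofs handle the horizontal case the same way, by conjugation.
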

\begin{proof}
Let $T \in \Vert_n(\mu / \lambda)$  and denote by
$$T_0,\ldots, T_m$$
the sequence of tableaux in a rectification of $T$ so that $T_0 = T$. (At this point we do not know $T_m = I_{n-1}$.) We will show by induction that each $T_i$ has at most one box in each row and has entries $1,\ldots,n-1$ that are weakly increasing from top to bottom.   This certainly holds for $T_0$.  Now consider adjacent boxes in $T_k$ like
$$	\begin{ytableau}
\bullet &a \\
b& \none
\end{ytableau}\ .$$
	 By our inductive hypothesis $a\leq b$ and so in $T_{k+1}$ these boxes become either
$$\begin{ytableau}
a &\bullet \\
b & \none
\end{ytableau}\qquad\text{ or }\qquad \begin{ytableau}
a &\bullet \\
\bullet & \none
\end{ytableau}$$	 
depending on whether $a<b$ or $a=b$.  As the set of entries in each $T_i$ is preserved it follows that $T_m$ has at most one box in each row and contains the entries $1,\ldots, n-1$.  As $T_m$ is a straight shape, it follows that $T_m = I_{n-1}$.  So $\Vert_n(\mu / \lambda) \subseteq \Rect_{I_{n-1}}(\mu / \lambda)$.

Next, fix $R\in \Rect_{I_{n-1}}(\mu/\lambda)$ and let 
$$R_0,\ldots, R_m$$
be the sequence of tableaux in a rectification of $R$ so that $R = R_0$ and $I_{n-1} = R_m$.   Since each step is invertible we can think of starting with $I_{n-1} = R_m$ and running this process backward.  An argument similar to the one above then shows that $R= R_0$ has at most one box in each row, has entries that weakly increase from top to bottom, and contains the numbers $1,\ldots, n-1$. It then follows that $R\in \Vert_n(\mu / \lambda)$ as needed.

The second assertion of the lemma follows from the first by taking conjugates.
\end{proof}

We now introduce a bijection between vertical $n$-strips and  horizontal $n$-strips that preserves $\lambda$.    For any shape $\mu\subseteq \sigma_n$ of length $n$ we define 
$$\tilde\mu  =( n, \mu_1-1, \ldots,\mu_n-1).$$
For such $\mu$ it follows that $\mu_n=1$ and therefore that $\tilde\mu$ has length at most $n$.

To get a feeling for how $\mu$ and $\tilde\mu$ relate consider the example where $n=7$, $\lambda=(3^2,2^2)$, and $\mu=(4^2,3^2,1^3)$ so that
 $\lambda\subseteq \mu\subseteq \sigma_7$. Coloring the boxes of $\lambda$ red, those of $\mu/ \lambda$ yellow, and those of $\sigma_7/\mu$ green gives
\ytableausetup{mathmode, boxsize=.8em}
$$\begin{ytableau}
	*(red)  &*(red)  &*(red) & *(yellow) & *(green)& *(green)& *(green)\\
	*(red)  &*(red)  &*(red) & *(yellow) & *(green)& *(green)\\
	*(red)  &*(red) & *(yellow) & *(green)& *(green)\\
	*(red)  &*(red) & *(yellow) & *(green)\\
	*(yellow) & *(green)& *(green)\\
	*(yellow) & *(green)\\
	*(yellow)
\end{ytableau}\ .$$
As in the motivating example, if we shift the green boxes down one unit and left one unit and then color all empty boxes yellow, we have the configuration
$$\begin{ytableau}
	*(red)  &*(red)  &*(red) & *(yellow) & *(yellow)& *(yellow)& *(yellow)\\
	*(red)  &*(red)  &*(red) & *(green) & *(green)& *(green)\\
	*(red)  &*(red) & *(yellow) & *(green)& *(green)\\
	*(red)  &*(red) & *(green) & *(green)\\
	*(yellow) & *(yellow)& *(green)\\
	*(green) & *(green)\\
	*(green) 
\end{ytableau}\ .$$
Note that $\tilde\mu$ is the shape of the yellow and red boxes.

\begin{lemma}\label{lem:vertical n-strip to horiz n-strip}

If $\mu/\lambda$ is a vertical $n$-strip, then $\tilde\mu/ \lambda$ is a horizontal $n$-strip so that $|\mu/\lambda| = |\tilde\mu/ \lambda|$.  Moreover, the skew shape $\sigma_n / \tilde\mu$ is obtained by shifting the shape $\sigma_n / \mu$ down one unit and left one unit.
\end{lemma}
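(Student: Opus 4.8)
The plan is to prove the lemma by directly unwinding the definition of $\tilde\mu$ and checking, one at a time, the three conditions that make $\tilde\mu/\lambda$ a horizontal $n$-strip, and then verifying the geometric shift claim coordinate by coordinate. There is no deep content here: the whole argument is bookkeeping with partition coordinates, and the only points that need a moment's care are the behavior of the boundary rows of the staircase and the dichotomy $|\mu/\lambda|\in\{n-1,n\}$.

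First I would record some elementary facts. Since $\mu\subseteq\sigma_n$ and $\mu$ has length $n$, its $n$-th part satisfies $1\le\mu_n\le n+1-n=1$, so $\mu_n=1$; hence the last listed entry $\mu_n-1$ of $\tilde\mu$ is $0$ and $\tilde\mu$ has length at most $n$. Because $\mu$ is a partition with $\mu_1\le n$, the sequence $n\ge\mu_1-1\ge\mu_2-1\ge\cdots\ge\mu_{n-1}-1\ge 0$ is weakly decreasing, so $\tilde\mu$ is a genuine partition; and since $\tilde\mu_1=n$ while $\tilde\mu_i=\mu_{i-1}-1\le(n+1-(i-1))-1=n+1-i$ for $i\ge 2$, we get $\tilde\mu\subseteq\sigma_n$. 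Finally I would check $\lambda\subseteq\tilde\mu$: for $i=1$ we have $\lambda_1\le n=\tilde\mu_1$, and for $i\ge 2$ we have $\lambda_i\le\lambda_{i-1}\le\mu_{i-1}-1=\tilde\mu_i$, where the middle inequality is exactly the vertical $n$-strip condition $\mu_{i-1}-1\le\lambda_{i-1}$. Thus $\tilde\mu/\lambda$ is a well-defined skew shape.

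Next I would verify the three horizontal $n$-strip conditions. The equality $\tilde\mu_1=n$ holds by construction. For the size condition, $|\tilde\mu|=n+\sum_{i=1}^{n-1}(\mu_i-1)=n+(|\mu|-\mu_n)-(n-1)=|\mu|-\mu_n+1=|\mu|$, using $\mu_n=1$; hence $|\tilde\mu/\lambda|=|\tilde\mu|-|\lambda|=|\mu|-|\lambda|=|\mu/\lambda|$, which is $n$ or $n-1$. This one computation simultaneously proves the auxiliary claim $|\mu/\lambda|=|\tilde\mu/\lambda|$. For the remaining condition, observe $\tilde\mu_{i+1}=\mu_i-1\le\lambda_i$ for every $i$ — once more this is just the defining inequality of the vertical $n$-strip, re-indexed — and this is precisely the statement that no column of $\tilde\mu/\lambda$ contains two boxes. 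Together with $\tilde\mu_1=n$ and $|\tilde\mu/\lambda|\in\{n-1,n\}$, this shows $\tilde\mu/\lambda$ is a horizontal $n$-strip.

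For the shift statement I would work in matrix coordinates: the box $(r,c)$ lies in $\sigma_n/\mu$ iff $\mu_r<c\le n+1-r$, while the box $(r+1,c-1)$ lies in $\sigma_n/\tilde\mu$ iff $\tilde\mu_{r+1}<c-1\le n+1-(r+1)=n-r$; since $\tilde\mu_{r+1}=\mu_r-1$, the latter condition is equivalent to $\mu_r<c\le n+1-r$, i.e.\ to the former. Hence the map $(r,c)\mapsto(r+1,c-1)$ — shifting down one unit and left one unit — carries the boxes of $\sigma_n/\mu$ bijectively onto the boxes of $\sigma_n/\tilde\mu$. The one boundary remark I would make is that the empty bottom row $r=n$ of $\sigma_n/\mu$ (empty since $\mu_n=1=n+1-n$) is sent to the nonexistent row $n+1$, which is harmless, and dually the top row $r=1$ of $\sigma_n/\tilde\mu$ is empty since $\tilde\mu_1=n$. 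The ``main obstacle'' is therefore not really an obstacle at all: it is simply carrying out these coordinate comparisons, and the two-case size accounting, without an off-by-one slip.
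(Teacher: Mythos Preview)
Your argument has a genuine error in the step where you verify $\lambda\subseteq\tilde\mu$.  You write, for $i\ge 2$, that $\lambda_i\le\lambda_{i-1}\le\mu_{i-1}-1=\tilde\mu_i$, justifying the middle inequality by ``exactly the vertical $n$-strip condition $\mu_{i-1}-1\le\lambda_{i-1}$.''  But that is the \emph{reverse} inequality: the vertical-strip hypothesis says $\mu_{i-1}-1\le\lambda_{i-1}$, not $\lambda_{i-1}\le\mu_{i-1}-1$.  A concrete counterexample to your middle inequality is $n=3$, $\lambda=(2,1)$, $\mu=(2,2,1)$: here $\mu/\lambda$ is a vertical $3$-strip of size $2$, yet $\lambda_1=2>1=\mu_1-1$.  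So the chain $\lambda_i\le\lambda_{i-1}\le\mu_{i-1}-1$ breaks, and you have not established $\lambda\subseteq\tilde\mu$.

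The paper repairs exactly this point by a case split on whether $\mu_{i-1}>\lambda_{i-1}$ or $\mu_{i-1}=\lambda_{i-1}$.  In the first case your chain is valid.  In the second case one uses the size hypothesis $|\mu/\lambda|\ge n-1$: since each $\mu_j-\lambda_j\in\{0,1\}$ and at most one of the $n$ differences can vanish, we must have $\mu_i=\lambda_i+1$, whence $\lambda_i+1=\mu_i\le\mu_{i-1}=\lambda_{i-1}$ gives $\lambda_i\le\lambda_{i-1}-1=\mu_{i-1}-1=\tilde\mu_i$ directly.  The rest of your proof --- the size computation $|\tilde\mu|=|\mu|$, the verification of $\tilde\mu_{i+1}=\mu_i-1\le\lambda_i$, and the coordinate check for the shift of $\sigma_n/\mu$ --- is correct and matches the paper's approach.
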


\begin{proof}
Since $\mu_i-1\leq \lambda_i$ for all $1\leq i\leq n$,  it follows from our definition of $\tilde\mu$ that $\tilde\mu_{i+1}\leq \lambda_i$ for all $i$ and that $\tilde\mu_1=n$. To conclude the proof, it suffices to show that $\lambda\subseteq \tilde\mu,$ for then since $|\tilde\mu|=|\mu|$ we will have $|\tilde\mu/ \lambda|=|\mu/ \lambda|,$ which is $n$ or $n-1$. And so we aim to show that $\tilde\mu_i\geq \lambda_i$ for all $1\leq i\leq n$. 

For $i=1$ this is clear, since $\tilde\mu_1=n$ and $\lambda\subseteq \sigma_n$.  Now suppose $2\leq i\leq n$ so that $\tilde\mu_i=\mu_{i-1}-1$. If $\mu_{i-1}> \lambda_{i-1}$ then we have 
$$\tilde\mu_i = \mu_{i-1}-1\geq \lambda_{i-1}\geq \lambda_i.$$
Now consider the case when $\mu_{i-1}=\lambda_{i-1}$.  As  $\mu/ \lambda$ is a vertical $n$-strip with  $|\mu/ \lambda|\geq n-1$ we must have $
\mu_j=\lambda_j+1$ for all $j\neq i-1$. In particular, 
$$\lambda_i+1 = \mu_i \leq \mu_{i-1} = \lambda_{i-1}$$
and so 
$$\tilde\mu_i=\mu_{i-1}-1=\lambda_{i-1}-1\geq \lambda_i.$$

To address the last claim, note that after shifting  $\sigma_n / \mu$ down and to the left one unit its first row is empty and its $i$th row for $i>1$ extends, left to right, from column $\mu_{i-1}$ to column $n+1-i$. On the other hand, the first row of  $\sigma_n / \tilde\mu$ is empty (as $\tilde\mu_1 = n$) and its $i$th row for $i>1$ extends, left to right, from column $\tilde\mu_i +1 = \mu_{i-1}$ to column $n+1-i$.  (If $\mu_{i-1}> n+1-i$ then the $i$th row is empty in both cases.)
\end{proof}


We now get the following correspondence.  

\begin{lemma}\label{lem:Vert to Horiz}
	We have an explicit bijection $\varphi: \Vert_n(\mu/ \lambda) \to \Horiz_n(\tilde\mu/ \lambda)$.
\end{lemma}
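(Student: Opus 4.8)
The statement to prove is Lemma~\ref{lem:Vert to Horiz}: an explicit bijection $\varphi\colon \Vert_n(\mu/\lambda)\to\Horiz_n(\tilde\mu/\lambda)$. The obvious route is to realize $\varphi$ as a single K-jeu de taquin move, mirroring the picture in \eqref{eq:map2}, and then verify it lands in the right set and is invertible. Concretely, given $T\in\Vert_n(\mu/\lambda)$, I would place a $\bullet$ in each of the inner corners of $\sigma_n/\mu$ that lie in $\mu$ — more precisely, form the outer corners of $\tilde\mu/\lambda$ relative to $\sigma_n$ — and slide: the content of $\sigma_n/\mu$ (which is empty, so this is really a shift of the ambient rectangle) moves down-and-left by one unit exactly as in Lemma~\ref{lem:vertical n-strip to horiz n-strip}, while the strip $T$ is carried from shape $\mu/\lambda$ to shape $\tilde\mu/\lambda$. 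The cleanest way to say this rigorously is: define $\varphi$ by applying one round of $\rjdt$ (reverse K-jdt) to $T$ with the set $C$ of outer corners chosen so that, after the slide, the unoccupied region of $\sigma_n$ is precisely $\sigma_n/\tilde\mu$; equivalently, $\varphi(T)=\fjdt_B(T)$ for the set $B$ of inner corners of $\mu/\lambda$ inside $\lambda$'s complement that effect the required shift. I would pick whichever of these two formulations makes the monotonicity bookkeeping shortest.

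\textbf{Key steps, in order.} First, set up the ambient geometry: by Lemma~\ref{lem:vertical n-strip to horiz n-strip}, $\tilde\mu/\lambda$ is a horizontal $n$-strip with $|\tilde\mu/\lambda|=|\mu/\lambda|$ and $\sigma_n/\tilde\mu$ is the down-left shift of $\sigma_n/\mu$; this tells me the target shape is correct before I even touch the entries. Second, define $\varphi$ as a single K-jdt slide as above and check it is well-defined, i.e. that the slide does carry shape $\mu/\lambda$ to shape $\tilde\mu/\lambda$ — this is immediate from the shape computation in Lemma~\ref{lem:vertical n-strip to horiz n-strip} together with the fact that K-jdt slides preserve cell count up to the usual K-theoretic merging. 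Third, verify that $\varphi(T)$ is actually in $\Horiz_n(\tilde\mu/\lambda)$: its entries are among $1,\dots,n-1$ (K-jdt preserves the set of entries, with possible deletion of a repeat in the K-theoretic case), and — the one real content point — they weakly increase left to right along rows. This should follow from a local two-box argument identical in spirit to the one already carried out in the proof of Lemma~\ref{lem:rect is vert}: a vertical domino $\begin{ytableau}a\\b\end{ytableau}$ with $a\le b$ becomes, under the slide, a horizontal configuration that is still weakly increasing in the reading order appropriate to $\tilde\mu$. Fourth, construct the inverse: since every K-jdt slide is reversed by the complementary slide (the paragraph in the excerpt on $\rjdt_C(\fjdt_B(T))$), applying the analogous up-right shift to any $S\in\Horiz_n(\tilde\mu/\lambda)$ recovers an element of $\Vert_n(\mu/\lambda)$, and the two maps are mutually inverse by Fact~\ref{fact:infusion involution}-style reversibility (or directly, since no genuine K-merging can occur here — at most one repeated value, living in a single row/column, so the slide is in fact ordinary jdt on standardization). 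Conclude that $\varphi$ is a bijection.

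\textbf{Main obstacle.} The shape-level claims are already packaged in Lemma~\ref{lem:vertical n-strip to horiz n-strip}, so the work is entirely in the entry bookkeeping: showing that the weakly-increasing-top-to-bottom condition on a vertical strip transforms, under the prescribed slide, into the weakly-increasing-left-to-right condition on the corresponding horizontal strip, and that the map is invertible on the nose. The subtlety is the K-theoretic repetition: a vertical $n$-strip can have one repeated value (as the excerpt notes just before the lemma), so I must make sure the slide does not create or destroy repetitions in a way that breaks the bijection. I expect to handle this by observing that the unique possible repeat occupies two boxes in a single row of $\mu/\lambda$ (or a single column after the move), so along the slide it behaves like an ordinary jdt pair and no K-merge/K-split actually fires; thus $\varphi$ is literally a classical jdt slide read off the standardization, which makes invertibility transparent. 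If that observation turns out to be false in some edge case, the fallback is to invoke Lemma~\ref{lem:rect is vert} to identify $\Vert_n$ and $\Horiz_n$ with $\Rect_{I_{n-1}}$ and $\Rect_{I^*_{n-1}}$ respectively, and realize $\varphi$ through K-infusion with the URT $I_{n-1}$ (Fact~\ref{fact:min is URT}, Fact~\ref{fact:infusion involution}), which gives bijectivity for free and reduces the problem to identifying the output shape — again supplied by Lemma~\ref{lem:vertical n-strip to horiz n-strip}.
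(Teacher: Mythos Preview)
Your approach is genuinely different from the paper's, and as written it has a real gap.

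The paper's proof is entirely elementary and uses no jeu de taquin at all. It splits on $|\mu/\lambda|$. If $|\mu/\lambda|=n-1$, then the entries $1,\dots,n-1$ each occur exactly once and the weak monotonicity forces a unique filling, so $|\Vert_n(\mu/\lambda)|=1$; by Lemma~\ref{lem:vertical n-strip to horiz n-strip} the same holds for $\Horiz_n(\tilde\mu/\lambda)$, and $\varphi$ is trivial. If $|\mu/\lambda|=n$, then exactly one value is repeated, and the paper observes that the choice of repeated value is in natural bijection with the set of outer corners of $\lambda$; the analogous statement holds for $\Horiz_n(\tilde\mu/\lambda)$ by conjugation, and $\varphi$ is the composite through the common indexing set of outer corners of $\lambda$. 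That is the whole proof.

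Your proposed construction via a single K-jdt slide does not work as stated. You want $\varphi(T)$ to have shape $\tilde\mu/\lambda$, i.e.\ the \emph{same} inner boundary $\lambda$ but a different outer boundary $\tilde\mu$. A forward slide $\fjdt_B(T)$ requires $B$ to consist of inner corners of $\mu/\lambda$, which lie in $\lambda$, and would move the inner boundary strictly inside $\lambda$; a reverse slide $\rjdt_C(T)$ requires $C$ to consist of outer corners, and would move the outer boundary outside $\mu$ while leaving the inner boundary at $\lambda$ only if no entry moves---but then nothing happens. Neither produces a tableau of shape $\tilde\mu/\lambda$. The ``shift $\sigma_n/\mu$ down-left by one unit'' in Lemma~\ref{lem:vertical n-strip to horiz n-strip} is a purely geometric relabelling of the ambient complement, not a jdt operation on the strip $T$ itself. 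Your fallback via Lemma~\ref{lem:rect is vert} and K-infusion also does not close the gap: $\Vert_n(\mu/\lambda)=\Rect_{I_{n-1}}(\mu/\lambda)$ and $\Horiz_n(\tilde\mu/\lambda)=\Rect_{I^*_{n-1}}(\tilde\mu/\lambda)$ rectify to \emph{different} URTs $I_{n-1}$ and $I^*_{n-1}$, so Lemma~\ref{lem:fus bij}/Fact~\ref{fact:infusion involution} give you no direct bijection between them; you would still need an independent matching, which is exactly what the outer-corner argument supplies.

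What you did identify correctly is that Lemma~\ref{lem:vertical n-strip to horiz n-strip} handles all the shape bookkeeping, and that at most one repeated value can occur. Those two observations are in fact enough to finish by the counting argument above, with no slides required.
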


\begin{proof}

	We consider two cases depending on $|\mu/ \lambda|$.

\bigskip
\noindent
	Case 1: $|\mu/ \lambda| = n-1$
\bigskip

	In this case the numbers 1 through $n-1$ each appear exactly once in each $V\in \Vert_n(\mu/\lambda)$.  This together with the fact that the entries of $V$ are weakly increasing from top to bottom implies that
	$$|\Vert_n(\mu/ \lambda)| = 1.$$ 
By the second claim in Lemma~\ref{lem:vertical n-strip to horiz n-strip} we see that $|\tilde\mu/ \lambda|= n-1$.  By an analogous argument $|\Vert_n(\tilde\mu/ \lambda)| =1$, making our bijection $\varphi$ a triviality in this case.   
	
\bigskip
\noindent	
	Case 2: $|\mu/ \lambda| = n$
\bigskip

We first claim that there is a natural correspondence between the outer corners of $\lambda$ and $\Vert_n(\mu/ \lambda)$. To see this observe that $\mu/\lambda$ has exactly one box on each of the $n$ rows, filled with the numbers 1 through $n-1$ so that they are weakly increasing from top to bottom.  Hence there must be exactly one repeated value and  every $V\in \Vert_n(\mu/ \lambda)$ is characterized by this repeated value.  On the other hand, for any outer corner $(i,j)$ of $\lambda$ we can construct $V$ by labeling its first $i$ boxes $1$ to  $i$ and then labeling its remaining $n-i$ boxes $i$ through $n-1$.  As $(i,j)$ is an outer corner of $\lambda$ the boxes labeled $i$ are not in the same column and so $V\in \Vert_n(\mu/\lambda)$. A straightforward check shows that this gives our desired correspondence.   

Again by Lemma~\ref{lem:vertical n-strip to horiz n-strip} we know that $|\tilde\mu/ \lambda| = n$.  By appealing to conjugates, we also obtain a correspondence between the outer corners of $\lambda$  and  $\Horiz_n(\tilde\mu/\lambda)$.  Consequently, we obtain a bijection
 $\varphi: \Vert_n(\mu/ \lambda) \to  \Horiz_n(\tilde\mu/ \lambda)$.  
\end{proof}

We are now ready to prove Theorem~\ref{thm:col shift}.  
\begin{proof}[Proof of Theorem~\ref{thm:col shift}]
For $n\geq 3$, we define our column-shift map $\cC:\fW_n\to \fW_n$ as follows.  For any $w\in \fW_n$ let $G_v(w) = (W,V,M)$ so that $V\in \Vert_{I_{n-1}}(\mu/\lambda)$ for some $\lambda\subseteq \mu$ by Lemma~\ref{lem:rect is vert}.   The same lemma together with Lemma~\ref{lem:Vert to Horiz}  shows that
$$\varphi(V)\in \Horiz(\tilde\mu/\lambda) = \Rect_{I^*_{n-1}}(\tilde\mu/\lambda).$$
Letting $M'$ be the tableau obtained by shifting $M$ down and left one unit, it follows from Lemma~\ref{lem:vertical n-strip to horiz n-strip} that $M'$ has shape $\sigma_n / \tilde\mu$.  We now see that 
$(W,\varphi(V),M')$ is in the codomain of $G_h$ and define
$$\cC(w) = G_h^{-1}(W,\varphi(V),M').$$
As $G_v, G_h$, and $\varphi$ are bijections, so is $\cC$. 

To check that $\cC$ has the desired properties set $u = \cC(w)$.  By our construction, both $w$ and $u$ are K-Knuth equivalent to $\Read(W)$, so $w\equiv_K u$ and by Fact~\ref{fact:longest inc and dec} we conclude that $w\sim_{rid} u$. Additionally,  $\cM(w_2\cdots w_n)$ and $\cM(u_1\cdots u_{n-1})$ are the same tableau (modulo a translation) and so  Theorem~\ref{thm:M implies cid} implies that 
$$w_2\cdots w_n \sim_{cid} u_1\cdots u_{n-1}.$$
The set of integers in $u$ is the same as the set of integers in $w$ because each of these sets is the set of integers in $W$.
\end{proof}

\section{$\cM(u)=\cM(w)$ implies $u\sim_{cid} w$}\label{sec:cM}

In this section we prove Theorem~\ref{thm:M implies cid}, thereby establishing that $\cM(w)$ determines CID data for words. To carry out this proof, we first establish an RSK-like algorithm, which we call iRSK.   Like classical RSK, iRSK gives a mapping from words to certain pairs of semistandard Young tableaux 
$$w\mapsto (\cP, \cQ)$$ 
with the same shape. This mapping is introduced in Subsection~\ref{subsec:irsk1}.  In Subsection~\ref{subsec:irsk2} we prove that if two words $u,w$ have the same $\cM$ then they have the same $\cQ$.   This requires a careful analysis of the rectification process of a word as defined in Subsection~\ref{subsec:k-infusion}.  Finally in Subsection~\ref{subsec:irsk3} we prove that $\cQ$ generalizes the classical recording tableau in that $Q(u) = Q(w)$ implies $u\sim_{cd} w$.  Combining these results, together with an appeal to word reversal, gives us a proof that if $\cM(u)=\cM(w)$ then $u\sim_{cid} w$.

\subsection{An inflated RSK algorithm}\label{subsec:irsk1} 
 \ytableausetup{mathmode, boxsize=1.1em}
 
 In this subsection we develop an insertion algorithm for words, called $\iRSK$, that is similar to classical RSK.  Throughout this subsection we make use of the RSK algorithm and the surrounding language without giving a full review.  A treatment of these ideas can be found in~\cite{Fulton1996,Sagan2001,Stanley2012}.

Recall that the classical RSK algorithm consists of two parts: ``insertion" and ``bumping".    Our first definition isolates the former.
 
 \begin{definition}
     For any semistandard Young tableau $p$ consisting of a single row and any  positive integer $a$ let
     $$p\dashleftarrow a$$
     be the semistandard tableau $p'$ obtained as follows.  If $p$ is empty or $a$ is greater than or equal to all the entries of $p$ then $p'$ is the result of appending $a$ to the (right) end of $p$.  Otherwise, $p'$ is the result of replacing the leftmost entry of $p$ that is larger than $a$ by $a$.

     If $w\in \fW_n$ then we define $p\dashleftarrow w$ as 
     $$ ((p \dashleftarrow w_1) \dashleftarrow w_2)\cdots )\dashleftarrow w_n.$$
 \end{definition}
 For example, 
 $$\left(\ytableaushort{224556}\dashleftarrow 351\right) = \ytableaushort{123555}\ .$$

To motivate our variant of the insertion process, which we will call i-insertion, consider the tableau
$$P' = \ytableaushort{224556,446}\ .$$ 
To i-insert a 5 into this tableau we first replace $P'$ by the skew tableau
$$\begin{ytableau}
\none&\none&\none&\none&\none&\none&5\\
2&2&4&5&5&6\\
4&4&6
\end{ytableau}$$
and denote the rows by $p_0', p_1',p_3'$.  We inflate $w = 5$ to $w^+ = 55$ so that the rightmost occurrence of 5 in $$p_1 : = \left(p_1' \dashleftarrow 55\right) = \ytableaushort{2245555}$$
is in the same column as the leftmost occurrence of $5$ in row $p_0'$.  Note that the set of elements bumped from $p_1'$  forms the word $\alpha = 6$. We inflate this word to $666$ so that in 
$$p_2 : = \left(p_2' \dashleftarrow \ytableaushort{666}\right) = \ytableaushort{446666}$$
the rightmost $6$ is in the same column as the leftmost 6 in $p_1'$.  In this step nothing was bumped so we stop.    The resulting tableau from this operation is then 
$$\ytableaushort{2245555,446666}\ .$$
We formalize this insertion algorithm in the following definition.  

 \begin{definition}
Given a semistandard Young tableau $P'$, denote its $i$th row by $p_i'$.  For any positive integer $a$ we define $P'\twoheadleftarrow a$ to be the semistandard Young tableau $P$ with rows $p_i$ determined as follows.   

First redefine $P'$ as $P'\oplus \raisebox{-.3em}{\ytableaushort{a}}$, index the rows from 0, and set $S_0 = \{a\}$.  We now proceed recursively.  Given the set $S_i = \{a_1<\cdots <a_k\}$, whose values are a subset of $p_{i}'$, let $\alpha_i^+$ be the inflation of the word $\alpha=a_1\cdots a_k$ so that if
$$p_{i+1} := p_{i+1}'\dashleftarrow \alpha_i^+$$
then the rightmost occurrence of $a_j$ in $p_{i+1}$ is in the same column as the leftmost occurrence of $a_j$ in $p_i'$.   Finally, let $S_{i+1}$ be the set of elements bumped during this insertion process.  

For any word $w$ we define its \emph{i-insertion tableau} $\cP(w)$ to be the tableau
$$((\emptyset\twoheadleftarrow w_1 )\twoheadleftarrow w_2 )\cdots )\twoheadleftarrow w_n.$$
We define the \emph{i-recording tableau} $\cQ(w)$ as follows.  For $j\geq 1$, let $\lambda^{(j)}$ be the shape of $\cP(w_1\cdots w_j)$ and set $\lambda^{(0)} = \emptyset$ so that 
$$\lambda^{(0)} \leq \cdots \leq \lambda^{(n)}.$$
Let $\cQ(w)$ be the tableau with shape $\lambda^{(n)}$ such that its entries in $\lambda^{(j)} / \lambda^{(j-1)}$ are $j$ for $1\leq j\leq n$.  Finally we write 
$$\iRSK(w) = (\cP(w), \cQ(w)).$$ 
 \end{definition}

Let us (again) illustrate i-insertion but this time using the notation from our definition.  To start take $a =1$ and 
$$P' = \ytableaushort{2255566,347788}\ .$$
Then to compute $P' \twoheadleftarrow 1$ we first redefine $P'$ as 
$$\begin{ytableau}
\none & \none & \none & \none & \none & \none &\none &  1\\
2&2&5&5&5&6&6\\
3&4&7&7&8&8
\end{ytableau}$$
and set $S_0 =\{1\}$. Now $\alpha_0^+ = 1^8$ so that 
$$p_1=\left(\underbrace{\raisebox{-.2em}{\ytableaushort{2255566}}}_{p'_1}\dashleftarrow \alpha_0^+\right) = \raisebox{-.2em}{\ytableaushort{11111111}}$$
and $S_1=\{2,5,6\}$.  Now $\alpha_1^+ = 25666$ so that 
$$p_2=\left(\underbrace{\raisebox{-.2em}{\ytableaushort{347788}}}_{p'_2}\dashleftarrow \alpha_1^+\right) = \raisebox{-.2em}{\ytableaushort{245666}}$$
and $S_2=\{3,7,8\}$.  Now $\alpha_2^+ = 37788$ so that
$$p_3=\left(\emptyset\dashleftarrow \alpha_2^+\right) = \raisebox{-.2em}{\ytableaushort{37788}}$$
and $S_3=\emptyset$.  
Consequently, 
$$P' \twoheadleftarrow 1 = \ytableaushort{11111111,245666,37788}\ .$$

\bigskip
To illustrate the entire $\iRSK$ algorithm, take $w = 4163276$.  Then we have
\renewcommand{\arraystretch}{2.5}
$$\begin{array}{c|ll}
w_1\cdots w_j & \cP(w_1\cdots w_j) & \cQ(w_1\cdots w_j)\\
\hline
	4& \ytableaushort{4} & \ytableaushort{1}\\
	41 & \ytableaushort{11,4} & \ytableaushort{12,2} \\
	416 & \ytableaushort{116,4} & \ytableaushort{123,2}\\
	4163 & \ytableaushort{1133,466}& \ytableaushort{1234,244}\\
	41632 & \ytableaushort{11222,333,46}  & \ytableaushort{12345,244,55}\\
	416327 & \ytableaushort{112227,333,46} & \ytableaushort{123456,244,55}\\
	4163276 & \ytableaushort{1122266,333777,46} & \ytableaushort{1234567,244777,55}\ .
\end{array}$$

We leave it to the reader to show that in general $\cP(w)$ and $\cQ(w)$ are semistandard Young tableaux. 

We can of course extend the definition of iRSK from words to 2-line arrays, just as in RSK (see Subsection 4.3).  We shall not need this extension in the present paper.

 \subsection{$\cM(u)=\cM(w)$ implies $\cQ(u)=\cQ(w)$}\label{subsec:irsk2}

The goal of this subsection is to prove that if two words $u,w$ have the same $\cM$ then they have the same i-recording tableau $\cQ$. To carry this out we first introduce the following notation.
\begin{notation}
Let $\W{}{0}(w)$ be the diagonal skew-tableau that represents the word $w$. (See the paragraph following Remark~\ref{rmk:Inc}.)  For $j\geq 1$, let $\W{}{j}(w)$ be the tableau obtained from $\W{}{0}(w)$ by carrying out the first $j$ steps in the calculation of $\cW(w) = \fjdt_{M_{n-1}}(\W{}{0}(w))$, i.e., 
$$\fjdt_{B_{n-j}}\circ\cdots \circ \fjdt_{B_{n-1}}(\W{}{0}(w)),$$
where $B_i$ is the set of boxes in $M_{n-1}$ containing $i$.

Let $\kW{}{j}{0}(w)$ be the tableau consisting of $\W{}{j}(w)$ together with the placement of dots used to compute $\W{}{j+1}(w)$, and for $k\geq 1$ let $\kW{}{j}{k}(w)$ be the tableau (including boxes with dots) that results from interchanging the dots with $1$, then $2,\ldots$, up through $k$.
\end{notation}

For example if $w = 4351243$ then 
$$\W{}{0}(w) = \hspace*{-1cm}\begin{ytableau}
\none&\none&\none&\none&\none&\none&3\\
\none&\none&\none&\none& \none &4\\
\none&\none&\none&\none&2\\
\none&\none&\none&1\\
\none&\none&5 \\
\none&3 \\
4\\
\end{ytableau}
\qquad
\W{}{1}(w) =\hspace*{-1cm} \begin{ytableau}
\none&\none&\none&\none&\none&3\\
\none&\none&\none&\none& 2 &4\\
\none&\none&\none&1\\
\none&\none&1\\
\none&3&5 \\
3 \\
4\\
\end{ytableau}$$
and then 
$$\W{}{1,0}(w) = \hspace{-1cm}\begin{ytableau}
\none&\none&\none&\none&\none[\bullet]&3\\
\none&\none&\none&\none[\bullet]& 2 &4\\
\none&\none&\none[\bullet]&1\\
\none&\none[\bullet]&1\\
\none[\bullet]&3&5 \\
3 \\
4\\
\end{ytableau}
\qquad
\W{}{1,1}(w) = \hspace*{-1cm}\begin{ytableau}
\none&\none&\none&\none&\bullet&3\\
\none&\none&\none&1& 2 &4\\
\none&\none&1&\bullet\\
\none&1&\bullet\\
\bullet&3&5 \\
3 \\
4\\
\end{ytableau}
\qquad
\W{}{1,2}(w) = \hspace*{-1cm}\begin{ytableau}
\none&\none&\none&\none&2&3\\
\none&\none&\none&1& \bullet &4\\
\none&\none&1&\bullet\\
\none&1&\bullet\\
\bullet&3&5 \\
3 \\
4\\
\end{ytableau}\ .$$

Using this notation we now give an outline of this section.  Recall that $\cQ(w)$ is completely determined by the shapes of $\cP(w_1\cdots w_j)$ for all $j$.  Our strategy is to show that, for all $j$, $\cM(w)$ determines (part of) the shape of $\W{}{j}$ (Lemma~\ref{lem:cM determines cW}) which in turn determines the shape of $\cP(w_1\cdots w_j)$ (Lemma~\ref{lem:P = W}).  

To facilitate our arguments, observe first that for any word $w$ the shape of $\W{}{j}(w)$ is such that the first elements in each of the first $n-j$ columns are along the diagonal given by the boxes $(n-j,1), (n-j-1,2), \cdots,  (1,n-j)$.  It turns out that in what follows, we need only concern ourselves with these $n-j$ initial columns.  Since they start on successive rows we introduce the following (unorthodox) coordinate system to reference boxes among these $n-j$ initial columns. In $\W{}{j}(w)$ we write $\Wcoor{k}{i}$ to indicate the $k$th box (from the top) in column $i\leq n-j$.  For convenience we further define $\Wcoor{0}{i}$ when $i<n-j$ to be the box that is directly above $\Wcoor{1}{i}$.

For example, consider the tableau
$$\W{}{4}(4351243)= \begin{ytableau}
\none &\none & 1 & 2&3\\
\none &1 & 4\\
1 & 5 \\
3 \\
4\\
\end{ytableau}\ .$$
 The first $7-4 =3$ columns have topmost boxes that are along a diagonal. Under our new coordinate system, $\Wcoor{2}{3}$ references the box containing the 4 in the third column.  Likewise, $\Wcoor{2}{1}$ references the box containing the 3 in the first column.

It is important to note that under this coordinate system $\Wcoor{k}{i}$ in $\W{}{j}(w)$ references the same box as $\Wcoor{k+1}{i}$ in $\W{}{j+1}(w)$ whenever $i+j< n$.   Furthermore, when working with the skew tableaux  $\kW{}{j}{k}(w)$ we shall use the coordinate system for $\W{}{j}(w)$. For example, in 
$$\kW{}{4}{2}(4351243)= \begin{ytableau}
\none &1 & 2 & \bullet&3\\
1 &\bullet & 4\\
\bullet & 5 \\
3 \\
4\\
\end{ytableau}$$
the 1's are in positions $\Wcoor{0}{1}$ and $\Wcoor{0}{2}$ while $\Wcoor{1}{3}$ references the box containing the 2.

To motivate our first few lemmas, consider the intermediate tableaux in the construction of $\cP(w)$  where $w = 4351243$:
$$\ytableaushort{4}\quad \ytableaushort{33,4}\quad \ytableaushort{335,4}\quad \ytableaushort{1111,355,4}\quad \ytableaushort{11112,355,4}\quad \ytableaushort{111124,355,4}\quad \ytableaushort{1111233,344444,45}\ .$$
Likewise consider the tableaux $\W{}{0}(w),\ldots,\W{}{6}(w)$ in the construction of $\cW(w)$:
 \ytableausetup{mathmode, boxsize=1em}
$$\begin{ytableau}
\none &\none &\none &\none &\none &\none &3\\
\none &\none &\none &\none &\none &4\\
\none &\none &\none &\none &2\\
\none &\none &\none &1\\
\none &\none &5\\
\none &3\\
4\\
\end{ytableau}\ \begin{ytableau}
\none &\none &\none &\none &\none & 3\\
\none &\none &\none &\none & 2 & 4\\
\none &\none &\none & 1\\
\none &\none &1\\
\none &3 & 5\\
3 \\
4\\
\end{ytableau}\ 
\begin{ytableau}
\none &\none &\none &\none &2 & 3\\
\none &\none &\none &1 & 4\\
\none &\none & 1\\
\none &1 & 5\\
3 & 5\\
4\\
\end{ytableau}\quad
\begin{ytableau}
\none &\none &\none &1 &2 & 3\\
\none &\none &1 & 4\\
\none &1 & 5\\
1 & 5 \\
3 \\
4\\
\end{ytableau}\quad
\begin{ytableau}
\none &\none & 1 & 2&3\\
\none &1 & 4\\
1 & 5 \\
3 \\
4\\
\end{ytableau}$$

$$
\begin{ytableau}
\none &1 & 2& 3\\
1 & 4 \\
3 & 5\\
4\\
\end{ytableau}\quad \begin{ytableau}
1 & 2& 3\\
3& 4 \\
4 & 5
\end{ytableau}\ .$$

\begin{notation}
For any skew tableau $T$ we denote its $i$th column by $T_i$.
\end{notation}

Observe that the first column of $\cP(w)$ is the same as the first column in $\W{}{6}(w)$, and the second column of $\cP(w)$ is the same as the second column in $\W{}{5}(w)$, and in general 
$$\W{i}{7-i}(w)=\cP_i(w_1\cdots w_7).$$
Additionally, we see that the first column of $\cP(w_1\cdots w_6)$ is the same as the first column in $\W{}{5}(w)$, and the second column is the same as the second column in $\W{}{4}(w)$, etc.  In general we have 
$$\W{i}{6-i}(w)=\cP_i(w_1\cdots w_6).$$
We first work to prove that this pattern holds in general.

\begin{lemma}\label{lem:cols of W}
	For any word $w$ of length $n$ and $m\leq n$ we have $\W{i}{j}(w) = \W{i}{j}(w_1\cdots w_m)$ for all $i+j\leq m$.  
\end{lemma}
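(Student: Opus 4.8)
The plan is to prove this by induction on $n - m$, the number of letters of $w$ that lie beyond $w_1\cdots w_m$. The base case $n = m$ is trivial. For the inductive step it suffices to compare $w = w_1\cdots w_n$ with the word $w' = w_1\cdots w_{n-1}$ of length $n-1$, and show that $\W{i}{j}(w) = \W{i}{j}(w')$ for all $i + j \le n-1$; the general case then follows by composing these identities. So the real content is a single ``remove the last letter'' comparison.

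First I would unwind the definitions. The diagonal tableau $\W{}{0}(w)$ places $w_\ell$ in box $(n+1-\ell, \ell)$, while $\W{}{0}(w')$ places $w'_\ell = w_\ell$ in box $(n-\ell, \ell)$ for $\ell \le n-1$; so $\W{}{0}(w')$ is exactly $\W{}{0}(w)$ with the bottom box (containing $w_n$) deleted and everything shifted up one row. The computation of $\cW$ proceeds by applying $\fjdt_{B_{n-1}}, \fjdt_{B_{n-2}}, \ldots$ in turn, where $B_i$ is the set of boxes of $M_{n-1}$ (resp.\ $M_{n-2}$) containing $i$. The inner corners being filled with dots at step $j$ are the boxes $\Wcoor{1}{i}$ — equivalently, in ordinary coordinates, the diagonal boxes that are the ``$i$th'' topmost box of the current columns. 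The key structural observation, already flagged in the text preceding the lemma, is that under the $\langle k, i\rangle$ coordinate system a box $\Wcoor{k}{i}$ of $\W{}{j}(w)$ is \emph{the same physical box} as $\Wcoor{k+1}{i}$ of $\W{}{j+1}(w)$ whenever $i + j < n$; and deleting the last letter and shifting up is precisely the operation that turns the $\langle k, i\rangle$-indexed box of $\W{}{0}(w)$ into the $\langle k-1, i\rangle$-indexed box of $\W{}{0}(w')$ (with the box $\langle 1, n\rangle$ removed). So in $\langle k, i\rangle$-coordinates the two jeu-de-taquin computations should be running the \emph{same} sequence of slides, just offset by one in the $j$-index, until the slides reach the diagonal region that involves the deleted box.

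The heart of the argument is then a locality statement for $K$-jeu de taquin on the diagonal skew shape: the entries and shape of columns $1, \ldots, i$ near the top (i.e.\ the boxes $\Wcoor{k}{i'}$ with $i' \le i$) after $j$ rounds of $\fjdt$ depend only on the portion of the initial diagonal tableau lying in those same columns and a bounded number of rows below the diagonal — in particular not on $w_n$, provided $i + j \le n - 1$ (so that the slide ``front'' has not yet propagated down to where the deleted box was). I would make this precise by tracking, round by round, that each slide $\fjdt_{B_i}$ acting on $\W{}{j}(w)$ moves dots only within the region $\{\Wcoor{k}{i'} : k \le i' \} \cup \{\text{one extra column}\}$, so the state of the first $m - j$ columns is unaffected by whatever sits in column $m - j + 1$ and below. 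Then $\W{i}{j}(w)$ for $i + j \le m$ is computed entirely inside a window that $w$ and $w_1\cdots w_m$ share verbatim, giving the equality. I would carry this out by induction on $j$, with the inductive claim being precisely ``$\kW{}{j}{k}(w)$ and $\kW{}{j}{k}(w_1\cdots w_m)$ agree on all boxes $\Wcoor{\ell}{i}$ with $i + j \le m$ (appropriately matched under the shift), for every $k$.''

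\textbf{Main obstacle.} The subtle point — and where I expect to spend most of the effort — is the behavior of $K$-jeu de taquin near the ``boundary'' of the window: because $\fjdt$ can \emph{merge} two dots into one (the $aa \leftrightarrow a$ move) and because a dot directly to the \emph{right} of the window can in principle reach into it, I need to argue carefully that no such interaction crosses the line $i + j = m$ from the wrong side. Concretely, one must check that a dot sitting in column $m - j + 1$ of $\W{}{j}(w)$ can only be pushed further right/down by the slides $\fjdt_{B_i}$, never left into column $m - j$, and that the value that would be written into box $\Wcoor{k}{m-j}$ is forced by entries already in columns $\le m-j$. Establishing this monotonic ``slides go down-and-right and never retreat past the frontier'' invariant — probably the cleanest formulation is in terms of the explicit dot-trajectory description of $\fjdt$ given in the excerpt — is the technical crux; once it is in hand the lemma follows by the bookkeeping induction sketched above.
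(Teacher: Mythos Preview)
Your approach—induction on $j$ via locality of the $K$-jdt slide—is exactly the paper's, though you have made it harder than it is. Two simplifications: first, the reduction to $m=n-1$ buys nothing, since the induction runs directly for any pair of words; second, your flagged ``main obstacle'' is a non-issue, because by the very definition of $\fjdt$ a dot is replaced only by the entry to its right or below, so dots move exclusively rightward and downward and each entry moves at most one column leftward per slide. Hence column $i$ of $\W{}{j}$ is determined by columns $1,\ldots,i+1$ of $\W{}{j-1}$, with nothing from beyond the frontier ever crossing back—there is no merging phenomenon or monotonicity invariant to establish beyond what is already the definition. The paper's entire proof is accordingly three lines: columns $1,\ldots,m$ agree at $j=0$; if columns $1,\ldots,k+1$ agree at stage $j-1$ then columns $1,\ldots,k$ agree at stage $j$; iterate.
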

\begin{proof}
We begin with a general observation.  Assume $u$ and $w$ are two words such that for some $j$ and some $k$ we have
$$\W{i}{j-1}(w) = \W{i}{j-1}(u),$$
for all $i\leq k+1$.   By definition, it follows that
$$\W{i}{j}(w) = \W{i}{j}(u),$$
for all $i\leq k$.  

Now observe that if $j =0$ then  
$$\W{i}{0}(w) = w_i = \W{i}{0}(w_1\cdots w_m),$$ 
for all  $i \leq m$.   By the above observation, it follows that
$$\W{i}{1}(w) = w_i = \W{i}{1}(w_1\cdots w_m),$$ 
for all $i+1\leq m$.  Repeating this argument proves our claim.
\end{proof}

\begin{lemma}\label{lem:W,P}
    Let $w$ be a word of length $n$ and fix $i\geq 1$.  If
\begin{align}
  \W{i-1}{n-i}(w) &= \cP_{i-1}(w_1\cdots w_{n-1})\label{eq:W,P 1}\\
  \W{i}{n-i-1}(w) &= \cP_i(w_1\cdots w_{n-1})\label{eq:W,P 2}\\
\W{i+1}{n-i-1}(w) &= \cP_{i+1}(w)\label{eq:W,P 3}
\end{align}    
then $\W{i}{n-i}(w) = \cP_i(w)$.
\end{lemma}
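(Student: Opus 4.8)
The statement to prove is a local compatibility identity: under the three inductive hypotheses \eqref{eq:W,P 1}--\eqref{eq:W,P 3}, the $i$th column of $\cP(w)$ agrees with the coordinate-column $\Wcoor{\cdot}{i}$ of $\W{}{n-i}(w)$. The natural approach is to follow the single $K$-jeu de taquin slide that transforms $\W{}{n-i-1}(w)$ into $\W{}{n-i}(w)$ and compare it, step by step, with the single i-insertion $\cP(w_1\cdots w_{n-1})\twoheadleftarrow w_n$ that transforms $\cP(w_1\cdots w_{n-1})$ into $\cP(w)$. Both operations are ``cascading'' processes that propagate through successive columns (respectively rows, after accounting for the diagonal shift), so the hypotheses feed exactly the input data one needs at column $i$, and the conclusion is what gets passed along to column $i+1$ in the next application of the lemma.

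\textbf{Step 1.} First I would set up the bookkeeping: recall that $\W{}{n-i}(w)$ is obtained from $\W{}{n-i-1}(w)$ by the slide $\fjdt_{B_i}$ (the inner corners containing $i$ in $M_{n-1}$), passing through the intermediate tableaux $\kW{}{n-i-1}{k}(w)$ as the dots interchange with $1,2,\dots$. Using the coordinate system $\Wcoor{k}{i}$, the key structural fact is that in $\W{}{n-i-1}(w)$ column $i$ has its top box one diagonal step to the right of column $i-1$'s top box; the slide moves the dot that begins at $\Wcoor{0}{i}$ (really the inner corner just NW of column $i$'s top) down and into column $i$. I would record precisely which boxes of column $i$ in $\W{}{n-i-1}(w)$ get modified and in what order.

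\textbf{Step 2.} The heart of the argument is a column-by-column dictionary: the set $S$ of values that get ``bumped out'' of column $i-1$ during the slide (i.e. the values displaced from $\Wcoor{\cdot}{i-1}$ that must be absorbed by column $i$) should be identified with the bumping set $S_{i-1}$ in the definition of $\twoheadleftarrow$, and the resulting column $\Wcoor{\cdot}{i}$ of $\W{}{n-i}(w)$ should be shown equal to $p_i = p_i' \dashleftarrow \alpha_{i-1}^+$, where $p_i' = \cP_i(w_1\cdots w_{n-1})$ (supplied by \eqref{eq:W,P 2}). Hypothesis \eqref{eq:W,P 1} tells us the state of column $i-1$ before the slide enters column $i$, hypothesis \eqref{eq:W,P 3} controls column $i+1$ so we know the slide ``exits'' correctly, and \eqref{eq:W,P 2} is the tableau into which we insert. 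The ``inflation'' built into the definition of $\twoheadleftarrow$ — padding $\alpha$ so rightmost occurrences line up with leftmost occurrences in the row above — is exactly the combinatorial shadow of how $K$-jdt dots propagate (a dot can duplicate an entry rather than merely displace it), so I expect these to match after one carefully checks the column/row transposition caused by the diagonal.

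\textbf{Main obstacle.} The hard part will be getting the K-theoretic subtlety exactly right: in ordinary jdt each slide just permutes entries, but in $K$-jdt an entry can both stay and move (the $\cdots a \leftrightarrow \cdots aa$ phenomenon), and this is precisely why the i-insertion was designed with inflation $\alpha_i^+$ rather than a plain row insertion. I would need a clean lemma describing, for a single column of $\W{}{n-i-1}(w)$ with an incoming dot and an incoming displaced set $S$, exactly which entries of that column survive, which are displaced, and which are duplicated — and then verify this matches $\dashleftarrow \alpha_{i-1}^+$ entry for entry. A secondary nuisance is the boundary/edge cases: when column $i$ is the last nonempty column, when $S_{i-1}$ is empty (the slide terminates), and when $i+j$ is at the extreme of the allowed range so the coordinate identifications $\Wcoor{k}{i}$ in $\W{}{j}$ versus $\Wcoor{k+1}{i}$ in $\W{}{j+1}$ need care. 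Once the single-column dictionary is established, stitching it together is routine, since \eqref{eq:W,P 1}--\eqref{eq:W,P 3} are precisely the adjacent-column data the dictionary consumes.
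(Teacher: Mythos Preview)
Your plan misreads what the lemma is comparing. In the paper's notation $\cP_i$ is the $i$th \emph{column} of $\cP$, and the statement asserts $\W{i}{n-i}(w)=\cP_i(w)$ as an equality of columns. In Step~2 you write $p_i=p_i'\dashleftarrow\alpha_{i-1}^+$ with $p_i'=\cP_i(w_1\cdots w_{n-1})$, but in the definition of $\twoheadleftarrow$ the symbols $p_i,p_i'$ are \emph{rows} of the insertion tableau and $S_{i-1},\alpha_{i-1}^+$ govern the passage from row $i-1$ to row $i$. So your ``dictionary'' identifies a column of $\W{}{n-i}$ with a row of $\cP$, and identifies what the single slide $\fjdt_{B_i}$ does between columns $i-1$ and $i$ of $\W{}{n-i-1}$ with the row-bumping data $S_{i-1}$ of the i-insertion of $w_n$. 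These are different processes acting along different axes; the vague ``column/row transposition caused by the diagonal'' does not repair this, and no such global correspondence is actually set up or used in the paper.

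The paper's proof is more local and does not try to match the two algorithms process-for-process. It shows $\W{i}{n-i}(w)$ and $\cP_i(w)$ agree entry by entry, inducting on the height $k$ in that column. The base case identifies the top entry of both with $\min\{w_i,\dots,w_n\}$. For the inductive step one fixes the $2\times 2$ (or $2\times 3$) window around position $\Wcoor{k-1}{i}$ and splits into two cases according to whether a dot ever occupies $\Wcoor{k-1}{i}$ during the slide from $\W{}{n-i-1}$ to $\W{}{n-i}$. In each case the three hypotheses \eqref{eq:W,P 1}--\eqref{eq:W,P 3} pin down the relevant neighbouring entries of $\cP(w_1\cdots w_{n-1})$ and $\cP(w)$, and a short comparison of the $K$-jdt rule with the i-insertion rule at that single box forces the $k$th entries to agree. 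If you want to salvage your approach, you would first need to reformulate the dictionary so that it genuinely compares columns of $\cP$ with columns of $\W{}{j}$; but it is simpler to follow the entrywise induction.
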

\begin{proof}

To see that the first entries of $\cP_i(w)$ and $\W{i}{n-i}(w)$ are the same, define $s_i=\min\{w_i,\ldots, w_n\}$ for $1\leq i\leq n$. By the definition of $\W{}{n-i}(w),$ the first entry of $\W{i}{n-i}(w)$ is $s_i$.

Now for $1\leq i\leq n$ define $t_i$ as follows.  Let $a_1$ be the smallest entry in $w$ and let the rightmost occurrence of $a_1$ be in position $r_1$. For $1\leq i\leq r_1$, let $t_i=a_1$.  Then let $a_2$ be the smallest entry occurring to the right of position $r_1$ in $w$ and let the rightmost occurrence of $a_2$ in $w$ be in position $r_2$.  For $r_1+1\leq i\leq r_2$, let $t_i=a_2$. Continue in this way.  By the definition of $i$-insertion, the first entry of $\cP_i(w)$ is $t_i$. It is easy to see, by thinking about the way the $s_i$ are chosen, that $s_i=t_i$ for $1\leq i\leq n$.

Having shown that the first entries of $\cP_i(w)$ and $\W{i}{n-i}(w)$ are equal, we now assume that their $(k-1)$st entries are equal with the aim of proving that their $k$th entries also coincide.  To this end we consider two cases depending on whether there is ever a dot in position $\langle k-1,i\rangle$ during the transformation from $\W{}{n-i-1}(w)$ to $\W{}{n-i}(w)$.  

\bigskip
\noindent
\textbf{Case 1:} The position $\Wcoor{k-1}{i}$ never contains a dot.
\bigskip

Assume for the moment that $i\geq 2$ and let the following diagrams depict $\W{}{n-i-1}(w)$ and $\W{}{n-i}(w)$, respectively, where $a$ is the $(k-1)$st element in the $i$th column on the left.  By the assumption of Case 1, $a$ is the $k$th element in the $i$th column on the right.  (Here each letter denotes a positive integer or $\infty$, indicating an empty box. It is convenient here to use $\infty$ rather than 0.)  

\begin{center}
    \begin{tikzpicture}[squarednode/.style={rectangle, draw=red!60, fill=red!5, very thick, minimum size=10mm}] 
        \draw(0,0) -- (0,6) -- (6,6);
        
        \draw(0,2) -- (1,2) --(1,3) -- (2,3) -- (2,4) -- (3,4) -- (3,5) -- (4,5) -- (4,6) ;
        
        \draw[dashed](2,0) -- (2,3);
        \draw[dashed](3,0) -- (3,4);
        \draw[dashed](4,0) -- (4,5);
        \draw[dashed](5,0) -- (5,6);

        \node[squarednode] at (3.5,1.5) {$a$};
        \node[squarednode] at (2.5,1.5) {$*$};
        \node[squarednode] at (3.5,2.5) {$*$};
        \node at (3,6.5) {$W^{(n-i-1)}(w)$};
        
        \node at (3.5,5.3) {$i$};
    \end{tikzpicture}\qquad\qquad\qquad
     \begin{tikzpicture}[squarednode/.style={rectangle, draw=red!60, fill=red!5, very thick, minimum size=10mm}] 
        \draw(0,0) -- (0,6) -- (6,6);
        
        \draw(0,3) -- (1,3) --(1,4) -- (2,4) -- (2,5) -- (3,5)--(3,6);
        
        \draw[dashed](2,0) -- (2,4);
        \draw[dashed](3,0) -- (3,5);
        \draw[dashed](4,0) -- (4,6);
        \draw[dashed](5,0) -- (5,6);

        \node[squarednode] at (3.5,1.5) {$a$};
        \node[squarednode] at (2.5,1.5) {$b$};
        \node[squarednode] at (3.5,2.5) {$c$};
        \node at (3,6.5) {$W^{(n-i)}(w)$};
        \node at (3.5,5.3) {$i$};
    \end{tikzpicture}
\end{center}

Note that \eqref{eq:W,P 1} implies that $b$ is in position $(k-1,i-1)$ in $\cP(w_1,\ldots, w_{n-1})$.  Additionally, \eqref{eq:W,P 2} implies that $a$ is in position $(k-1,i)$ in $\cP(w_1\cdots w_{n-1})$. Finally, by our inductive hypothesis we know that $c$ is in position $(k-1,i)$ in $\cP(w)$.  

Now assume $a<\infty$.  As $c,b<a$ it follows  by the definition of i-insertion that $a$ must be in position $(k,i)$ in $\cP(w),$ as required.

If $a=\infty$ then the above argument implies, by the definition of i-insertion, that the position $(k,i)$ in $\cP(w)$ must be empty. 

Finally, if $i=1$ then $a$ is in the first column of $\cP(w_1\cdots w_{n-1})$ and the same argument, in the absence of $b$, yields the desired result.  

\bigskip
\noindent
\textbf{Case 2:} The position $\Wcoor{k-1}{i}$ eventually contains a dot.
\bigskip

Assume for the moment that $i\geq 2$ and let the following diagrams depict $\W{}{n-i-1}(w)$ and $\W{}{n-i}(w)$ respectively, where $a$ is the $k$th element in the $i$th column on the left and $b'$ is the $k$th element in the $i$th column on the right.

\begin{center}
    \begin{tikzpicture}[squarednode/.style={rectangle, draw=red!60, fill=red!5, very thick, minimum size=10mm}] 
        \draw(0,0) -- (0,6) -- (6,6);
        
        \draw(0,2) -- (1,2) --(1,3) -- (2,3) -- (2,4) -- (3,4) -- (3,5) -- (4,5) -- (4,6) ;
        
        \draw[dashed](2,0) -- (2,3);
        \draw[dashed](3,0) -- (3,4);
        \draw[dashed](4,0) -- (4,5);
        \draw[dashed](5,0) -- (5,6);

        \node[squarednode] at (3.5,1.5) {$a$};
        \node[squarednode] at (2.5,1.5) {$*$};
        \node[squarednode] at (3.5,2.5) {$b$};
        \node[squarednode] at (4.5,2.5) {$c$};
        \node[squarednode] at (4.5,3.5) {$*$};
        \node at (3,6.5) {$W^{(n-i-1)}(w)$};
        \node at (3.5,5.3) {$i$};
    \end{tikzpicture}\qquad\qquad\qquad
     \begin{tikzpicture}[squarednode/.style={rectangle, draw=red!60, fill=red!5, very thick, minimum size=10mm}] 
        \draw(0,0) -- (0,6) -- (6,6);
        
        \draw(0,3) -- (1,3) --(1,4) -- (2,4) -- (2,5) -- (3,5)--(3,6);
        
        \draw[dashed](2,0) -- (2,4);
        \draw[dashed](3,0) -- (3,5);
        \draw[dashed](4,0) -- (4,6);
        \draw[dashed](5,0) -- (5,6);

        \node[squarednode] at (3.5,2.5) {$b'$};
        \node[squarednode] at (2.5,2.5) {$d$};
        \node[squarednode] at (3.5,3.5) {$e$};
        \node[squarednode] at (4.5,3.5) {$*$};
        \node[squarednode] at (4.5,4.5) {$*$};
        \node at (3,6.5) {$W^{(n-i)}(w)$};
        \node at (3.5,5.3) {$i$};
    \end{tikzpicture}
\end{center}

It follows from our assumptions that in $\cP(w_1\cdots w_{n-1})$ and $\cP(w)$ the $i-1, i, i+1$ columns and $k-1, k$ rows give, respectively, the rectangles

\begin{center}
    \begin{tikzpicture}[squarednode/.style={rectangle, draw=red!60, fill=red!5, very thick, minimum size=10mm}] 
        \draw(0,0) -- (0,4) -- (6,4);
        
        \draw[dashed](2,0) -- (2,4);
        \draw[dashed](3,0) -- (3,4);
        \draw[dashed](4,0) -- (4,4);
        \draw[dashed](5,0) -- (5,4);
        
        \draw[dashed](0,1) -- (6,1);
        \draw[dashed](0,2) -- (6,2);
        \draw[dashed](0,3) -- (6,3);

        \node[squarednode] at (2.5,1.5) {$*$};
        \node[squarednode] at (2.5,2.5) {$d$};
        
        \node[squarednode] at (3.5,1.5) {$a$};
        \node[squarednode] at (3.5,2.5) {$b$};
        
        \node[squarednode] at (4.5,2.5) {$*$};
        \node[squarednode] at (4.5,1.5) {$*$};
        \node at (3.5,4.3) {$i$};
    \end{tikzpicture}\qquad\qquad\qquad
     \begin{tikzpicture}[squarednode/.style={rectangle, draw=red!60, fill=red!5, very thick, minimum size=10mm}] 
        \draw(0,0) -- (0,4) -- (6,4);
        
        \draw[dashed](2,0) -- (2,4);
        \draw[dashed](3,0) -- (3,4);
        \draw[dashed](4,0) -- (4,4);
        \draw[dashed](5,0) -- (5,4);
        
        \draw[dashed](0,1) -- (6,1);
        \draw[dashed](0,2) -- (6,2);
        \draw[dashed](0,3) -- (6,3);

        \node[squarednode] at (2.5,1.5) {$*$};
        \node[squarednode] at (2.5,2.5) {$*$};
        
        \node[squarednode] at (3.5,1.5) {$a'$};
        \node[squarednode] at (3.5,2.5) {$e$};
        
        \node[squarednode] at (4.5,2.5) {$*$};
        \node[squarednode] at (4.5,1.5) {$c$};
        \node at (3.5,4.3) {$i$};
    \end{tikzpicture}
\end{center}
where \eqref{eq:W,P 1} yields the placement of $d$, \eqref{eq:W,P 2} yields the placement of $a,b$, \eqref{eq:W,P 3} yields the placement of $c$, and finally, our inductive hypothesis yields the placement of $e$.  It now suffices to show that $a' = b'$.  

Since position $\Wcoor{k-1}{i}$ in $\W{}{n-i-1}(w)$ eventually contains a dot it follows that  $b' = \min(a,c)$.  Further, one of positions $\Wcoor{k-2}{i-1}$ or $\Wcoor{k-2}{i}$ in $\W{}{n-i-1}(w)$ must eventually contain a dot during this transformation and hence $d= b$ or $e =b$.   (Note when $k=2$ the transformation from $\W{}{n-i-1}(w)$ to $\W{}{n-i}(w)$ begins by placing dots at the top of columns $1,\ldots, i$.) 
 
 By definition of $\iRSK$ we have $a'\leq a,c$ and so $a' \leq \min(a,c)$. For a contradiction assume $a'<\min(a,c)$, where one or both of $a$ or $c$ might be $\infty$.   This means that $a'$ bumps $a$ and that this is the rightmost occurrence of $a'$ in the $k$th row of $\cP(w)$.  Consequently,  $b=a'$ and $d< b$.   As $\cP(w)$ is increasing along columns we have $e<a'=b$.  This contradicts the fact that either $b=d$ or $b=e$.  

If $i=1$ then $d$ is absent yet an argument analogous to that in the previous paragraph proves that $a' = b'$ as needed.  

\end{proof}

\begin{lemma}\label{lem:P = W}
	If $w\in \fW_n$ then 
	$$\W{i}{n-i}(w)=\cP_i(w)$$ 
	for all $i\leq n$.
\end{lemma}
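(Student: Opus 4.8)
The proof will go by a double induction: an outer induction on $n=|w|$, and, for each fixed $n$, an inner \emph{downward} induction on $i$, running from $i=n$ down to $i=1$. The outer base case $n=1$ is trivial, since both $\W{1}{0}(w)$ and $\cP_1(w)$ consist of the single box containing $w_1$. For the outer step, fix a word $w$ of length $n$, assume the lemma for all shorter words, and aim to prove $\W{i}{n-i}(w)=\cP_i(w)$ for every $i\le n$.

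\textbf{Inner base case $i=n$.} Here I will first record an elementary fact about i-insertion: for every word $v$ of length $m$, the tableau $\cP(v)$ has exactly $m$ columns, and its rightmost column is the single box, lying in the first row, that contains $v_m$. This follows by induction on $m$ directly from the definition of $\twoheadleftarrow$. When a letter $a$ is i-inserted into a tableau with $c$ columns, exactly one new column is created, namely column $c+1$, and it receives only the box $a$ in the first row: the inflated copies of $a$ that build the new row $p_1$ reach column $c+1$ (since the original first row had $c$ columns), while every later row $p_2,p_3,\dots$ of the cascade stays within the first $c$ columns, because the largest value such a row receives lands no further right than that value's leftmost occurrence in the preceding \emph{original} row, hence no further right than column $c$. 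Granting this, $\cP_n(w)$ is the single box containing $w_n$, which is exactly $\W{n}{0}(w)$, the $n$-th column of the diagonal tableau $\W{}{0}(w)$; so the inner base case holds. (Note that Lemma~\ref{lem:W,P} cannot be used here, as \eqref{eq:W,P 2} and \eqref{eq:W,P 3} would refer to the nonexistent superscript $n-i-1=-1$.)

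\textbf{Inner step.} Fix $i$ with $1\le i\le n-1$ and assume the inner hypothesis $\W{i+1}{n-i-1}(w)=\cP_{i+1}(w)$; this is precisely hypothesis \eqref{eq:W,P 3} of Lemma~\ref{lem:W,P}, so it remains to verify \eqref{eq:W,P 1} and \eqref{eq:W,P 2}. Since $(i-1)+(n-i)=n-1$ and $i+(n-i-1)=n-1$, Lemma~\ref{lem:cols of W} (applied with $m=n-1$) gives $\W{i-1}{n-i}(w)=\W{i-1}{n-i}(w_1\cdots w_{n-1})$ and $\W{i}{n-i-1}(w)=\W{i}{n-i-1}(w_1\cdots w_{n-1})$. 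Applying the outer induction hypothesis to $w_1\cdots w_{n-1}$ at the column indices $i-1$ and $i$ (both $\le n-1$) identifies these right-hand sides with $\cP_{i-1}(w_1\cdots w_{n-1})$ and $\cP_i(w_1\cdots w_{n-1})$, which are exactly \eqref{eq:W,P 1} and \eqref{eq:W,P 2}. Lemma~\ref{lem:W,P} then yields $\W{i}{n-i}(w)=\cP_i(w)$. (When $i=1$, \eqref{eq:W,P 1} refers to a nonexistent column $0$; but, as its proof shows, Lemma~\ref{lem:W,P} holds in that case using only \eqref{eq:W,P 2} and \eqref{eq:W,P 3}, so the argument is unchanged.) This completes the inner induction, and hence the lemma.

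\textbf{Main obstacle.} The substantive work has already been done in Lemma~\ref{lem:W,P}, whose Case~2 analysis of the rectification cascade is the real heart of the matter. The only difficulty left in assembling Lemma~\ref{lem:P = W} is organizational: correctly interleaving the two inductions, tracking the shifting $\langle k,i\rangle$ coordinate system as the superscript changes, and supplying the small fact about the last column of an i-insertion tableau that the base case $i=n$ requires.
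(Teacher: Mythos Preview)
Your proof is correct and follows essentially the same approach as the paper's: an outer induction on $n$, an inner downward induction on $i$, with Lemma~\ref{lem:cols of W} and the outer hypothesis supplying \eqref{eq:W,P 1}--\eqref{eq:W,P 2} so that Lemma~\ref{lem:W,P} can fire. The paper simply asserts $\cP_n(w)=w_n$ for the inner base case, whereas you supply an explicit argument for why column $n$ of $\cP(w)$ is a single box; you also make the $i=1$ edge case explicit, which the paper leaves implicit.
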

\begin{proof}
We proceed by induction on $n$ and note that the result clearly holds when $n=1$. Now fix some $n>1$ and some word $w$ with length $n$.  Observe that when $i = n$ we have 
$$\W{n}{0}(w) = w_n = \cP_n(w).$$
In light of this we further proceed by (reverse) induction on $i$, the column index.   Fix $i<n$ and assume that
$$\W{i+1}{n-i-1}(w) = \cP_{i+1}(w).$$
By Lemma~\ref{lem:cols of W} and our inductive hypothesis on $n$ we have
$$\W{i}{n-i-1}(w) = \W{i}{n-i-1}(w_1\cdots w_{n-1}) = \cP_i(w_1\cdots w_{n-1})$$
and
$$\W{i-1}{n-i}(w) = \W{i-1}{n-i}(w_1\cdots w_{n-1}) = \cP_{i-1}(w_1\cdots w_{n-1}).$$
It now follows by Lemma~\ref{lem:W,P} that 
$$\W{i}{n-i}(w) = \cP_i(w)$$
and hence the result holds for all words of length $n$ as needed.  
\end{proof}

\begin{lemma}\label{lem:cM determines cW}
For $i+j\leq n$ we have
$$|\W{i}{j}(w)| = |\makeset{x\in [n-j,n]}{x\notin \cM_i(w) }|.$$
\end{lemma}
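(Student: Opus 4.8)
The plan is to realize the claimed equality as a counting identity for the boxes of column $i$ of the staircase $\sigma_n$. First I would record the structural features of $\W{}{j}(w)$ that are implicit in the discussion preceding the lemma, re-deriving them if necessary by a one-line induction on $j$ from the definition of $\fjdt_{B_i}$: the inner shape of $\W{}{j}(w)$ is $\sigma_{n-j-1}$, its outer shape $\mu^{(j)}$ satisfies $\sigma_{n-j-1}\subseteq\mu^{(j)}\subseteq\sigma_n$, and consequently, for every $i\le n-j$, column $i$ of $\W{}{j}(w)$ is the block of $|\W{i}{j}(w)|$ boxes of column $i$ of $\sigma_n$ lying immediately below the top $n-j-i$ boxes (those being column $i$ of the inner staircase $\sigma_{n-j-1}$).

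The main point is to identify the complement $\sigma_n/\mu^{(j)}$ with a piece of $\cM(w)$, namely to prove $\mu^{(j)}=\sigma_n\setminus\{\text{boxes of }\cM(w)\text{ carrying a value}\ge n-j\}$. To see this I would pass through $\cW(w)$ and run the computation of $\rjdt_{\cM(w)}(\cW(w))$, which equals $w$ by Fact~\ref{fact:infusion involution} (applied with $\lambda=\rho$ the straight shape of $\cW(w)$ and $\mu=\sigma_n$); let $\cW(w)=T_1,T_2,\dots$ be the intermediate tableaux, where $T_{m+1}=\rjdt_{C_m}(T_m)$ with $C_m$ the set of boxes of $\cM(w)$ carrying the value $m$. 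I claim $T_{n-j}=\W{}{j}(w)$ for all $j$. Indeed, the local inverse property recalled in Subsection~\ref{subsec:k-infusion} ($T=\rjdt_C(\fjdt_B(T))$, where $C$ is the terminal set of dot positions) shows that each forward step $\W{}{j}(w)\xrightarrow{\ \fjdt_{B_{n-j-1}}\ }\W{}{j+1}(w)$ is undone by a reverse slide at an appropriate set of outer corners, so $\W{}{j}(w)$ is obtained from $\cW(w)$ by a composition of reverse slides; matching this composition step by step with the one that produces $\rjdt_{\cM(w)}(\cW(w))=w$ identifies the intermediate tableaux and, at the same time, shows that the set of boxes reverse-slid in passing from $\W{}{m+1}(w)$ back to $\W{}{m}(w)$ is exactly the set of value-$(n-m-1)$ boxes of $\cM(w)$. (Equivalently, one may run the K-infusion of $(M_{n-1},w)$ while tracking \emph{both} the image of $w$ and the images of the entries of $M_{n-1}$ on the board $\sigma_n$: after the first $j$ steps the $w$-part is $\W{}{j}(w)$, the unprocessed entries of $M_{n-1}$ still fill $\sigma_{n-j-1}$, and the already-processed entries fill $\sigma_n/\mu^{(j)}$ and constitute the sub-tableau of $\cM(w)$ supported on the boxes of value $\ge n-j$.) Since $\rjdt$ enlarges the outer shape by precisely the outer corners it absorbs, accumulating over the steps that produce $T_{n-j}$ yields $\mu^{(j)}=\rho\cup\{\text{boxes of }\cM(w)\text{ of value}\le n-j-1\}=\sigma_n\setminus\{\text{boxes of }\cM(w)\text{ of value}\ge n-j\}$.

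With this in hand the proof concludes by counting: for $i\le n-j$, column $i$ of $\sigma_n$ has $n+1-i$ boxes, which split into the top $n-j-i$ boxes (column $i$ of $\sigma_{n-j-1}$), then the $|\W{i}{j}(w)|$ boxes of column $i$ of $\W{}{j}(w)$, then the boxes of $\cM(w)$ in column $i$ carrying a value $\ge n-j$. Because $\cM(w)$ is an increasing tableau its $i$th column has distinct entries, and all of them are $<n$ (the entries of $M_{n-1}$ are at most $n-1$, and $\rjdt$ never creates a larger value), so the last group has size $|\{x\in[n-j,n]:x\in\cM_i(w)\}|$. Hence $|\W{i}{j}(w)|=(n+1-i)-(n-j-i)-|\{x\in[n-j,n]:x\in\cM_i(w)\}|=(j+1)-|\{x\in[n-j,n]:x\in\cM_i(w)\}|=|\{x\in[n-j,n]:x\notin\cM_i(w)\}|$, which is the assertion. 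I expect the middle paragraph to be the hard part: establishing that the intermediate forward tableaux $\W{}{j}(w)$ are recovered precisely by reverse-sliding $\cW(w)$ along the low-valued boxes of $\cM(w)$ — equivalently, that the recording encoded by $\cM(w)$ is compatible with the step-by-step structure of the forward K-rectification — is where the K-theoretic phenomena (dots splitting and merging, labels duplicated by K-moves) must be controlled, rather than being handled by the black-box involution alone.
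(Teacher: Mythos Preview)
Your proof is correct, and it takes a genuinely different route from the paper's. The paper argues by direct induction on $j$: at each forward step $\W{}{j}\to\W{}{j+1}$ it observes that column $i$ carries at most one dot at a time (because the tableau is increasing), so the net change in $|\W{i}{j}(w)|$ is $+1$ or $0$ according to whether the dot that enters column $i$ at the start eventually exits or stays; it then ties the ``stays'' case to $n-j-1\in\cM_i(w)$. Your argument instead establishes the global shape identity $\mu^{(j)}=\sigma_n\setminus\{\text{boxes of }\cM(w)\text{ of value}\ge n-j\}$ and reads off the column count by partitioning column $i$ of $\sigma_n$ into inner staircase, $\W{}{j}$, and high-value $\cM$ boxes.

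Both arguments rest on the same underlying fact --- that the terminal dot positions in the step processing value $n-j-1$ of $M_{n-1}$ are precisely the value-$(n-j-1)$ boxes of $\cM(w)$ (the ``simultaneous tracking'' picture of K-infusion). The paper invokes this fact one column at a time without comment; you invoke it for the entire shape and flag it as the hard part. You may be overestimating the difficulty here: this step-by-step compatibility is exactly how Thomas--Yong prove Fact~\ref{fact:infusion involution}, so it comes along with that citation rather than requiring new K-theoretic control. What your approach buys is a stronger intermediate statement (the full outer shape of $\W{}{j}(w)$, not just one column length); what the paper's approach buys is a shorter, more self-contained induction that never names $\mu^{(j)}$ at all.
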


\begin{proof}
First observe that this statement trivially holds for $j=0$ since  $n\notin  \cM_i(w)$ and $\W{i}{0} = w_i$ for all $i\leq n$.  Proceeding inductively, assume the claim holds for some $j<n-i$ and consider the transformation from $\W{}{j}(w)$ to $\W{}{j+1}(w)$.  A consequence of $\W{}{j}(w)$ being an increasing tableau is that during this transformation there is at most one dot in the $i$th column at any given step.  Further, any time a dot enters this column the number of elements in this column decreases by exactly 1.  Likewise, whenever a dot exits this column the number of elements in this column increases by exactly 1. 

Now consider two cases.  First assume there is no dot in the $i$th column in $\kW{}{j}{n}(w)$.  This means that $n-(j+1)\notin \cM_i(w)$.  From the fact that $i< n-j$ (we assumed $j< n-i$ above), it follows that there is a dot in column $i$ in $\kW{}{j}{0}(w)$.    As we start with a dot in this column and end with no dot, it follows from the first paragraph that $|\W{i}{j+1}(w)|= 1+ |\W{i}{j}(w)|$. By induction we now have
$$|\makeset{x\in [n-(j+1),n]}{x\notin \cM_i(w) }| = 1+ |\makeset{x\in [n-j,n]}{x\notin \cM_i(w) }| = |\W{i}{j+1}(w)|.$$

Next, assume there is a dot in the $i$th column in $\kW{}{j}{n}(w)$ so that $n-(j+1)\in \cM_i(w)$. By reasoning similar to the preceding paragraph, we have $|\W{i}{j+1}(w)|= |\W{i}{j}(w)|$ and hence, by induction, 
$$|\makeset{x\in [n-(j+1),n]}{x\notin \cM_i(w) }| = |\makeset{x\in [n-j,n]}{x\notin \cM_i(w) }| = |\W{i}{j+1}(w)|.$$
This completes our proof.
\end{proof}

\begin{lemma}\label{lem:R -> Q}
Fix $u,w\in \fW_n$.  If $\cM(u) = \cM(w)$ then $\cQ(u) = \cQ(w)$. 
\end{lemma}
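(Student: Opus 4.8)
The plan is to reduce the statement to the chain of lemmas already assembled in this subsection. Recall that $\cQ(w)$ is completely determined by the sequence of shapes $\lambda^{(0)}\leq \lambda^{(1)}\leq\cdots\leq\lambda^{(n)}$, where $\lambda^{(j)}=\sh(\cP(w_1\cdots w_j))$: the entries of $\cQ(w)$ in $\lambda^{(j)}/\lambda^{(j-1)}$ are exactly the $j$'s. So it suffices to show that $\cM(u)=\cM(w)$ forces $\sh(\cP(u_1\cdots u_j))=\sh(\cP(w_1\cdots w_j))$ for every $0\leq j\leq n$.

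The first step is to translate the shape of $\cP(w_1\cdots w_j)$ into data about the K-jeu de taquin tableaux $\W{}{n-j}$. By Lemma~\ref{lem:cols of W} we have $\W{i}{(n-j)-i}(w) = \W{i}{(n-j)-i}(w_1\cdots w_j)$ whenever $i+(n-j-i) = n-j \le j$, i.e. whenever $j \ge n-j$; and more simply, by applying Lemma~\ref{lem:P = W} to the word $w_1\cdots w_j$ of length $j$, we get $\W{i}{j-i}(w_1\cdots w_j) = \cP_i(w_1\cdots w_j)$ for all $i\le j$. Combining these, the $i$th column of $\cP(w_1\cdots w_j)$ is literally the $i$th column of $\W{}{j-i}(w_1\cdots w_j)$; and by Lemma~\ref{lem:cols of W} the latter equals the $i$th column of $\W{}{j-i}(w)$ as long as $i + (j-i) = j \le n$, which always holds. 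Hence
$$\sh\big(\cP(w_1\cdots w_j)\big)_i = \big|\W{i}{j-i}(w)\big| \qquad\text{for all } i\le j,$$
where $\sh(\cdot)_i$ denotes the length of the $i$th column of the shape. (Columns $i>j$ are empty since $\cP(w_1\cdots w_j)$ has $j$ entries weakly filling a straight shape — one should note here that the $i$th column being nonempty requires $i\le j$.)

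The second step is the punchline: Lemma~\ref{lem:cM determines cW} gives, for $i + (j-i) = j \le n$,
$$\big|\W{i}{j-i}(w)\big| = \big|\makeset{x\in[n-(j-i),\,n]}{x\notin\cM_i(w)}\big|,$$
which depends on $w$ only through the $i$th column $\cM_i(w)$ of $\cM(w)$. Therefore if $\cM(u)=\cM(w)$ then $|\W{i}{j-i}(u)| = |\W{i}{j-i}(w)|$ for all $i\le j\le n$, so $\sh(\cP(u_1\cdots u_j)) = \sh(\cP(w_1\cdots w_j))$ for all $j$, and consequently $\cQ(u)=\cQ(w)$.

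The main obstacle here is not in this lemma itself — given Lemmas~\ref{lem:cols of W}, \ref{lem:P = W}, and \ref{lem:cM determines cW} the argument is essentially bookkeeping about indices — but rather that all the substance lives in Lemma~\ref{lem:P = W} (equivalently Lemma~\ref{lem:W,P}), whose proof is the delicate case analysis on whether position $\Wcoor{k-1}{i}$ ever carries a dot. So in writing this up, the care required is only to keep the two index shifts straight: the passage from $\cP(w_1\cdots w_j)$ on a length-$j$ word to $\W{}{j-i}$ via Lemma~\ref{lem:P = W}, and then the stabilization $\W{i}{j-i}(w_1\cdots w_j)=\W{i}{j-i}(w)$ via Lemma~\ref{lem:cols of W}. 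Once those are in place, invoking Lemma~\ref{lem:cM determines cW} closes the argument immediately.
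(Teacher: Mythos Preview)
Your proof is correct and follows essentially the same route as the paper: reduce to showing $\cM(w)$ determines each $\sh(\cP(w_1\cdots w_j))$, then chain Lemma~\ref{lem:P = W} (applied to the prefix), Lemma~\ref{lem:cols of W}, and Lemma~\ref{lem:cM determines cW} to express $|\cP_i(w_1\cdots w_j)|=|\W{i}{j-i}(w)|$ purely in terms of $\cM_i(w)$. One small slip: your justification that columns $i>j$ are empty because ``$\cP(w_1\cdots w_j)$ has $j$ entries'' is false (inflation can create more than $j$ boxes---e.g.\ $\cP(41)$ has three), but the conclusion is still correct since each i-insertion extends the \emph{first row} by exactly one box, so $\cP(w_1\cdots w_j)$ has exactly $j$ columns.
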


\begin{proof}
	Let $\lambda^{(j)}$ be the shape of $\cP(w_1\cdots w_j)$.  By our definition we know that $\cQ(w)$ is completely determined by the sequence $\emptyset \leq \lambda^{(1)}\leq \cdots \leq \lambda^{(n)}$.  Therefore to prove the lemma it suffices to show that $\cM(w)$ completely determines the $\lambda^{(j)}$'s.  
	
To this end observe that, by Lemma~\ref{lem:P = W}, Lemma~\ref{lem:cols of W} and Lemma~\ref{lem:cM determines cW}, the height of the $i$th column of $\lambda^{(j)}$, for all $i\leq j$, is given by
$$|\cP_i(w_1\cdots w_{j})| = |\W{i}{j-i}(w)| = |\makeset{x\in [n-j+i,n]}{x\notin \cM_i(w) }|.$$
In other words, $\lambda^{(j)}$ is determined by the first $j$ columns of $\cM(w)$.  This implies our result.    
\end{proof}

\subsection{$\cQ(u)=\cQ(w)$ implies $u\sim_{cd} w$ and $\cM(u)=\cM(w)$ implies $u\sim_{cid} w$ }\label{subsec:irsk3}

To accomplish the goals of this section, we shall need an alternative description of the iRSK algorithm.  Not surprisingly, this description will rest on well-known properties of classical $\RSK$.  We begin by recalling $\RSK$ and some of these classical properties.

First we recall that a \emph{2-line array} is an array of positive integers consisting of two rows of equal length, such that the columns are ordered lexicographically from left to right, with the top entries taking precedence.  We call the elements of the top row \emph{indices}, and those of the bottom row \emph{values}.  The \emph{length} of such an array is the length of these rows. 

For any 2-line array $A$ we define $A^{\mystar}$ to be the 2-line array obtained by switching the top and bottom rows of $A$ and then placing the columns in lexicographic order.

We next recall that $\RSK$ is a bijection between all 2-line arrays of length $n$ and all pairs of semistandard Young tableaux of size $n$ with the same shape.  Going forward we write 
 $$\RSK(A) = (P(A), Q(A))$$
 where $P(A)$ and $Q(A)$ are the usual insertion and recording tableaux, respectively.  Since any word $w$ of length $n$ can be viewed as a 2-line array with values $w_1,\ldots, w_n$ and indices $1,\ldots, n$, we can also write $P(w)$ and $Q(w)$. A fundamental connection between RSK and inverses of 2-line arrays is the following. (For a proof see Chapter 4.2 in \cite{Fulton1996}.)
 
 \begin{fact}\label{fact:inverse rsk}
 For any 2-line array $A$ with $\RSK(A) = (P,Q)$ we have 
 $$\RSK(A^{\mystar}) = (Q,P).$$
 \end{fact}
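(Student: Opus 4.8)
The plan is to deduce the symmetry of $\RSK$ from a model of the algorithm in which swapping $P$ with $Q$ is transparent --- Viennot's geometric shadow-line construction (Fomin's growth diagrams would serve equally well, and I sketch that route at the end). First I would encode the $2$-line array $A$ of length $n$ as a configuration of $n$ lattice points, plotting the column $\binom{i}{j}$ of $A$ as the point $(i,j)$; repeated points are handled by the usual general-position convention, perturbing the points that share an index so that ties among values are broken in the order dictated by the column order of $A$. One then extracts the output in layers: the minimal points of the configuration, successively shadowed, cut out a family of staircase shadow lines whose vertical steps record (through their $y$-coordinates) the first row of $P(A)$ and whose horizontal steps record (through their $x$-coordinates) the first row of $Q(A)$; taking the northeast corners of these shadow lines as a new configuration and iterating produces the remaining rows.

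The crux is the lemma that this static recipe actually computes $\RSK$: a single layer of shadow lines reproduces exactly one left-to-right pass of Schensted row insertion into the current first row, and the entries bumped during that pass are precisely the northeast corners that seed the next layer. I would prove this by induction on the number of columns of $A$ processed, tracking at each step which entry of the first row (if any) is bumped and how the shadow-line boundary moves. This compatibility statement is the point where the insertion definition of $\RSK$ meets the geometry, and it is the step I expect to be the main obstacle; both the setup before it and the consequence after it are bookkeeping.

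Given the geometric description, the theorem falls out. The configuration attached to $A^{\mystar}$ is the reflection across the diagonal $y = x$ of the configuration attached to $A$, since passing to $A^{\mystar}$ swaps each index with its value and re-sorts the columns. This reflection carries shadow lines to shadow lines and northeast corners to northeast corners, while interchanging the $x$-coordinates read off horizontal steps with the $y$-coordinates read off vertical steps. Hence in every layer the first row of $P$ and the first row of $Q$ change places, so $\RSK(A^{\mystar}) = (Q, P)$.

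For a reader who prefers Fomin's growth diagrams, the same argument reads: $\RSK(A)$ is encoded by the partition-valued labelling of the grid $[0,n]\times[0,n]$ obtained by applying the forward local rules to the matrix of $A$ (repeated columns being accommodated by the standard conventions), with $P(A)$ read along the top edge and $Q(A)$ along the right edge. Reflecting the grid across its main diagonal exchanges the top and right edges and turns the matrix of $A$ into that of $A^{\mystar}$; because the local growth rule is symmetric in its two incoming edges, the reflected labelling is again a legitimate growth diagram, and reading off its edges yields $\RSK(A^{\mystar}) = (Q, P)$. As before, the single substantive ingredient is checking that this symmetric model agrees with the insertion definition of $\RSK$.
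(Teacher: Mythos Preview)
The paper does not give its own proof of this fact; it records it as a classical result and points to Fulton's \emph{Young Tableaux}, Chapter~4.2. Fulton's argument there is the matrix-ball construction, which is exactly the extension of Viennot's shadow-line picture from permutations to nonnegative integer matrices (equivalently, biwords). So your approach is not merely compatible with the cited source --- it is essentially the same proof, and your growth-diagram alternative is the other standard route.

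One point deserves more care than you give it. Your handling of repeated columns by ``the usual general-position convention, perturbing the points'' is where the permutation argument must be upgraded to biwords, and this step is not automatic. You need the perturbation chosen for $A$ to be carried by the diagonal reflection to a legitimate perturbation for $A^{\mystar}$; equivalently, you need that standardizing the biword commutes with the $\star$ operation in the sense $\mathrm{std}(A^{\mystar}) = \mathrm{std}(A)^{-1}$, and that RSK commutes with standardization on both tableaux. Both facts are true, but the tie-breaking conventions (equal indices ordered by value, equal values ordered left to right) are exactly what makes them true, and a wrong perturbation breaks the argument. Fulton's matrix-ball construction and the growth-diagram local rules for cells with arbitrary nonnegative fillings sidestep this by working directly with the multiplicity matrix, whose transpose is manifestly the matrix of $A^{\mystar}$; your growth-diagram sketch is therefore closer to complete than your Viennot sketch.
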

 
 There  is an important equivalence relation on words that is related to $\RSK$.  Recall that two words $u,w$ are said to be \emph{Knuth equivalent}, in which case we write $u\equiv w$, provided that one can be obtained from the other by a finite number of moves of the following two types:
\begin{align*}
	\cdots bca\cdots \qquad &\leftrightarrow \quad \cdots bac\cdots \qquad (a< b \leq c)\\
	\cdots acb\cdots \qquad &\leftrightarrow \quad \cdots cab\cdots\qquad (a \leq b < c).\\	
	\end{align*}
 Knuth equivalence and $\RSK$ are connected by the following classic result, which we shall need momentarily.  (For a proof see Theorem 3.4.3 in \cite{Sagan2001}.)

\begin{fact}\label{fact:Knuth implies P}
If $u,w$ are words, then $u\equiv w$ if and only if $P(u) = P(w)$. 
\end{fact}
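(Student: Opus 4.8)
The statement to prove is the classical theorem that, for words $u$ and $w$, we have $u \equiv w$ (Knuth equivalence) if and only if $P(u) = P(w)$. Since this is a well-known classical result — and the excerpt explicitly cites \cite{Sagan2001}, Theorem 3.4.3, for a proof — my plan would not be to reprove it from scratch but to outline the standard argument, which splits cleanly into two implications.

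For the forward direction ($u \equiv w \implies P(u) = P(w)$), the plan is to reduce to a single Knuth move and show that applying one elementary move to a word does not change its insertion tableau. First I would observe that it suffices to check the two generating moves $bca \leftrightarrow bac$ (for $a < b \le c$) and $acb \leftrightarrow cab$ (for $a \le b < c$), and that since RSK insertion of a suffix depends only on the current tableau, I may further reduce to the case where the differing triple sits at the end of the word — more precisely, I would argue that if $v, v'$ differ by one Knuth move then $P \leftarrow v$ and $P \leftarrow v'$ agree for every starting tableau $P$, by induction on the length of the common prefix. The heart of the matter is then the ``two-letter'' lemma: inserting $b$ then $c$ then $a$ into a fixed tableau produces the same result as inserting $b$ then $a$ then $c$ (and similarly for the other move). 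This is verified by a finite case analysis on how the bumping paths of the three insertions interact in each row — the key point being that when $a < b \le c$, the element $a$ bumps something weakly to the left of where $c$ would land, so the order of the $c$-insertion relative to the $a$-insertion is immaterial. I would present this as the routine row-by-row argument it is, citing \cite{Sagan2001} or \cite{Fulton1996} for the full details.

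For the reverse direction ($P(u) = P(w) \implies u \equiv w$), the plan is to show that every word is Knuth equivalent to the reading word of its insertion tableau. Concretely, I would prove that for any word $v$, we have $v \equiv \Read(P(v))$, where $\Read$ denotes the row reading word (bottom-to-top, left-to-right). This is done by induction on the length of $v$: writing $v = v' a$, one has $P(v) = P(v') \leftarrow a$, and one shows that appending $a$ to $\Read(P(v'))$ and then performing Knuth moves ``simulates'' the bumping process, yielding $\Read(P(v))$ — this is the standard ``column insertion via Knuth moves'' argument. Granting this, if $P(u) = P(w)$ then $u \equiv \Read(P(u)) = \Read(P(w)) \equiv w$, and we are done by transitivity and symmetry of $\equiv$.

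The main obstacle is the forward direction's two-letter bumping lemma: it is the one genuinely computational ingredient, requiring one to track how three consecutive insertions interact and to check that the Knuth inequalities ($a < b \le c$ versus $a \le b < c$) are exactly what is needed to make the bumping paths commute in the right way. Everything else is bookkeeping — reductions, inductions, and invoking transitivity. Since this is textbook material fully developed in the cited references, in the paper I would state it and cite \cite{Sagan2001} (Theorem 3.4.3) and \cite{Fulton1996} (Chapter 3) rather than reproduce the case analysis.
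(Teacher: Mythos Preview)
Your proposal is correct and matches the paper's approach: the paper does not prove this fact at all but simply cites \cite{Sagan2001}, Theorem 3.4.3, exactly as you conclude you would do. Your outline of the standard argument is accurate and more than the paper itself provides.
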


With these basics in hand we now introduce a variant of Knuth equivalence that is tailored to our needs.  

\begin{definition}
	We say two words $u$ and $w$ are \emph{i-Knuth equivalent} and write $u\equiv_i w$ provided that one can be obtained from the other by a finite number of the following moves:
	\begin{align*}
	\cdots a\cdots \qquad &\leftrightarrow \quad \cdots aa\cdots\\
	\cdots bca\cdots \qquad &\leftrightarrow \quad \cdots bac\cdots \qquad (a< b \leq c)\\
	\cdots acb\cdots \qquad &\leftrightarrow \quad \cdots cab\cdots\qquad (a \leq b < c).\\	
	\end{align*}
	  We call the first type of move an \emph{inflation}, and we say that any two words \emph{differ in multiplicity} provided that they differ by a finite number of inflations. 
\end{definition}

For our purposes, there are two salient properties of this definition.  The first is the obvious fact that if $u\equiv w$ then $u\equiv_i w$. The second property is the subject of our next lemma.

\begin{lemma}\label{lem: rd}
If  $u\equiv_i w$ then $u\sim_{rd} w$.  
\end{lemma}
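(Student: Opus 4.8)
The plan is to reduce to a single i-Knuth move and then treat the two types of moves separately, after first checking that passing to a value-restricted subword interacts nicely with i-Knuth moves. Recall that $u \sim_{rd} w$ means that for every integer interval $[a,b]$ the words $u',w'$ obtained from $u,w$ by deleting all letters not in $[a,b]$ satisfy $\lds(u') = \lds(w')$. Since equality of $\lds$ is an equivalence relation, so is $\sim_{rd}$, and it therefore suffices to prove the lemma when $u$ and $w$ differ by a \emph{single} i-Knuth move; the general case then follows by transitivity along a chain of such moves. So assume $u \leftrightarrow w$ is one move, fix an integer interval $[a,b]$, and let $u', w'$ be the corresponding restrictions.

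The key claim is that either $u' = w'$ or $u'$ and $w'$ again differ by a single i-Knuth move. If the move is an inflation of a letter $c$, this is clear: if $c \in [a,b]$ the restriction is again an inflation of $c$, and if $c \notin [a,b]$ then $u' = w'$. If the move is a Knuth move, say $\cdots bca\cdots \leftrightarrow \cdots bac\cdots$ with $a'<b'\leq c'$ as the values in the three distinguished positions (the other Knuth move is symmetric), note that $u$ and $w$ agree outside these three positions, so the restriction deletes the same letters there on both sides. If all three of $a',b',c'$ lie in $[a,b]$, they remain consecutive after restriction and the intrinsic inequalities $a'<b'\leq c'$ persist, so $u' \leftrightarrow w'$ is again a Knuth move of the same type. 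Otherwise at least one of $a',b',c'$ is deleted; here one uses crucially that $[a,b]$ is an \emph{interval}: since $a'\leq b'\leq c'$, it is impossible to delete $b'$ while retaining both $a'$ and $c'$. A short case check (delete $a'$; delete $c'$ but keep $a'$, with $b'$ retained or not; delete both $a'$ and $c'$; and the analogous subcases for the second Knuth move) shows that in each remaining case the two restricted words are literally identical, i.e. $u' = w'$.

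It remains to show that if $u'$ and $w'$ differ by a single i-Knuth move then $\lds(u') = \lds(w')$. If the move is an inflation, then a strictly decreasing subsequence can use at most one of two equal adjacent letters, so deleting the duplicate does not decrease $\lds$; conversely any strictly decreasing subsequence of the shorter word is one of the longer word, so $\lds$ is unchanged. If the move is a Knuth move, then $u' \equiv w'$, so by Fact~\ref{fact:Knuth implies P} they have the same RSK insertion tableau $P$; since $\lds(v)$ equals the length of the first column of $P(v)$ (Schensted's theorem, see e.g. \cite{Sagan2001}), we conclude $\lds(u') = \lds(w')$. Combining the last two paragraphs gives $\lds(u') = \lds(w')$ for every integer interval $[a,b]$, i.e. $u \sim_{rd} w$.

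I expect the only genuine work to be the interval case analysis in the second paragraph — verifying that restricting a Knuth move never yields two \emph{distinct} shorter words — but this is elementary and rests entirely on the fact that the retained letters form the intersection of the word with a genuine interval, which forbids deleting the middle value of a Knuth triple in isolation. Everything else is either bookkeeping (the reduction to a single move) or a direct appeal to the classical theory of Knuth equivalence and Schensted's theorem.
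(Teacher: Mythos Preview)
Your argument is correct and follows essentially the same strategy as the paper: reduce to a single i-Knuth move, check that value-restriction to an interval is compatible with such moves, and then invoke Schensted's theorem (same $P$ implies same $\lds$) for the Knuth case and a trivial observation for the inflation case. The only organizational difference is that the paper handles restriction by iteratively deleting the largest or smallest letter (citing Lemma~3 in Chapter~3 of \cite{Fulton1996}), whereas you restrict to an arbitrary interval $[a,b]$ in one shot via an explicit case check; this is the same content, unpacked.
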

\begin{proof} We first show that if $u\equiv_i w$ then $\lds(u)=\lds(w)$.
This is clear if $u$ and $w$ differ by a single inflation. On the other hand, if $u\equiv w$  then, by Fact~\ref{fact:Knuth implies P}, $P(u) = P(w)$ so by Exercise 1 in Chapter 3 of \cite{Fulton1996}, $\lds(u) = \lds(w)$. So $u\equiv_i w$ implies that $\lds(u)=\lds(w)$.  

It now suffices to show that if $u \equiv_i w$ and $u'$ and $w'$ are obtained from $u$ and $w$, respectively, by deleting all occurrences of the largest (respectively, smallest) letter then $u' \equiv_i w'$. If $u$ and $w$ differ by a single inflation then either $u'=w'$ or $u'$ and $w'$ differ by a single inflation, so $u'\equiv_i w'$.  On the other hand, if $u$ and $w$ differ by a single Knuth move then by Lemma 3 in Chapter 3 of~\cite{Fulton1996} we have $u'\equiv w'$ and so $u'\equiv_i w'$.  
\end{proof}

In order to bring CID information into the picture we need the following definition.  

\begin{definition}
For any $w\in \fW_n$ we define $w^\mystar$ to be the word obtained as follows.  Let $A$ be the 2-line array representing $w$, i.e., its values are $w_1,\ldots, w_n$ and its indices are $1,\ldots, n$.  Define $w^\mystar$ to be the word given by the values of $A^\mystar$.    
\end{definition}

 For example, if $w = 4113252$, then 
 \renewcommand{\arraystretch}{1}
 $$A = \left(\begin{array}{ccccccc}
	1 & 2& 3&4&5&6&7\\
	4&1&1&3&2&5&2
\end{array}\right)\qquad\text{and}\qquad A^{\mystar} = \left(\begin{array}{ccccccc}
	1&1 & 2&2 & 3 & 4 & 5\\
	2&3 & 5&7 & 4 & 1 & 6
\end{array}\right),$$
giving $w^\mystar =2357416$.  Observe that $w^\mystar$ is always a permutation and so the map $w\mapsto w^\mystar$ is not an involution.

 To prove our main theorems we require two lemmas. 
  
  \begin{lemma}\label{lem:rd,cd}
 Let $u,w$ be words.  Then $u\sim_{cd} w$ if and only if $u^\mystar \sim_{rd} w^\mystar$.  
 \end{lemma}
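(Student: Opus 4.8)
The plan is to understand precisely how the operation $w \mapsto w^{\mystar}$ transforms the "column" data of $w$ into the "row" data of $w^{\mystar}$. Recall that $w^{\mystar}$ is obtained from the $2$-line array $A$ of $w$ (indices $1,\dots,n$ on top, values $w_1,\dots,w_n$ on the bottom) by forming $A^{\mystar}$, which swaps the two rows and re-sorts the columns lexicographically, and then reading off the bottom (= old top = index) row of $A^{\mystar}$. The key structural observation is the following: if we list the columns of $A^{\mystar}$ as $(a_1,b_1),\dots,(a_n,b_n)$ in lexicographic order, so that $w^{\mystar} = b_1\cdots b_n$, then $a_1 \le a_2 \le \cdots \le a_n$ is exactly the non-decreasing rearrangement of $w_1,\dots,w_n$, and within a block of columns where $a_k$ is constant and equal to some value $v$, the corresponding $b_k$'s are exactly the positions $i$ (in increasing order) at which $w_i = v$. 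So $w^{\mystar}$ records, value-class by value-class, the positions carrying each value.

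First I would translate "strictly decreasing subsequence in a subinterval of columns of $w$" into the language of $w^{\mystar}$. A strictly decreasing subsequence $w_{i_1} > w_{i_2} > \cdots > w_{i_\ell}$ with $i_1 < i_2 < \cdots < i_\ell$ and all $i_t$ lying in a fixed interval $[p,q]$ of positions corresponds, in $A^{\mystar}$, to choosing one column from each of $\ell$ distinct value-classes, namely the columns $(w_{i_1}, i_1), \dots, (w_{i_\ell}, i_\ell)$; since the value-classes are read in increasing order of value but $i_1 < i_2 < \cdots$, and the entries $b_t = i_t$ all lie in $[p,q]$, this is exactly a strictly increasing subsequence of the subword of $w^{\mystar}$ obtained by deleting all letters not in $[p,q]$ — i.e. an increasing sequence in the "row subinterval" $[p,q]$ of $w^{\mystar}$. (The strictness "$>$" among the $w$-values becomes "distinct value-classes", hence the chosen columns come from different blocks, and since $b_t = i_t$ are automatically distinct and increasing we get a genuine strictly increasing subsequence of the restricted word; conversely any strictly increasing subsequence of the restricted word arises this way.) Thus for every integer interval $[p,q]$,
$$\lds(w_p \cdots w_q) = \lis\big((w^{\mystar})'\big),$$
where $(w^{\mystar})'$ denotes $w^{\mystar}$ with all letters outside $[p,q]$ deleted. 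This is the heart of the argument.

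Given that identity, the lemma is almost immediate. If $u \sim_{cd} w$, then for every $1 \le p \le q \le n$ we have $\lds(u_p\cdots u_q) = \lds(w_p\cdots w_q)$; by the displayed identity this says that for every interval $[p,q]$ the longest strictly increasing subsequence of $u^{\mystar}$ restricted to $[p,q]$ equals that of $w^{\mystar}$ restricted to $[p,q]$, which is precisely one half of $u^{\mystar} \sim_{rid} w^{\mystar}$ — but actually we only want $\sim_{rd}$ here, so I must be careful about which symbol: I should instead pair the identity with the dual one obtained by the same reasoning with the roles of $\lis/\lds$ swapped, namely $\lis(w_p\cdots w_q) = \lds((w^{\mystar})')$, which follows by applying the first identity after conjugating/complementing, or more directly by redoing the bijection tracking strictly increasing subsequences of $w$. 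Since the definition of $w \sim_{cd} w'$ only involves $\lds$ on column-subintervals, and $w \sim_{rd} w'$ only involves $\lds$ on value-subintervals, I need exactly the identity $\lds$-on-columns $=$ $\lis$-on-values — wait, that mismatches. Let me re-examine: $\sim_{rd}$ for $u^{\mystar}, w^{\mystar}$ asks for equality of $\lds$ of the restricted words. So I actually need $\lds(w_p\cdots w_q) = \lds((w^{\mystar})')$. Re-run the bijection: a strictly decreasing subsequence of $w$ inside positions $[p,q]$ picks columns from distinct value-classes with $b$-values $i_1 < \cdots < i_\ell$ but listed in increasing value order; to make this a \emph{decreasing} sequence in $w^{\mystar}$ I instead pick the decreasing-value reading. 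The cleanest fix is: strictly \emph{increasing} subsequences of $w$ on a position-interval correspond to strictly \emph{decreasing} subsequences of $w^{\mystar}$ on that value-interval (values increase $\Rightarrow$ value-classes read left to right, positions must then decrease for the $b$-sequence to be decreasing), and strictly \emph{decreasing} subsequences of $w$ correspond to strictly \emph{increasing} ones of $w^{\mystar}$. So $\sim_{cd}$ of $u,w$ (equality of $\lds$ on position-intervals) $\Longleftrightarrow$ equality of $\lis$ on value-intervals of $u^{\mystar}, w^{\mystar}$, i.e. $u^{\mystar} \sim_{ri} w^{\mystar}$ — not $\sim_{rd}$. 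This suggests the lemma as literally stated pairs $\sim_{cd}$ with $\sim_{rd}$ only because the intended companion identity also runs through $\lis$ on the other side; the safe route is to establish \emph{both} correspondence identities ($\lis$-columns $\leftrightarrow$ $\lds$-values and $\lds$-columns $\leftrightarrow$ $\lis$-values) and then observe that $w^{\mystar}$ is a permutation, and also that replacing $w$ by its complement (or $w^{\mystar}$ by its reverse-complement) interchanges $\lis \leftrightarrow \lds$ while preserving the relevant interval structure, so $\sim_{cd}$ and $\sim_{rd}$ match up after this harmless symmetry. I would carry out the two correspondence identities explicitly and then invoke the permutation/symmetry remark to align the subscripts.

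The main obstacle is purely bookkeeping: getting the bijection between subsequences of $w$ on a \emph{position}-subinterval and subsequences of $w^{\mystar}$ on a \emph{value}-subinterval exactly right, including the correct interchange of "increasing" and "decreasing" and verifying that strictness is preserved in both directions (this is where the inflation-invariance flavor of $\mystar$ and the fact that $w^{\mystar}$ is always a permutation do real work — repeated values in $w$ become repeated value-classes, i.e. blocks, and choosing at most one column per block is what forces strict monotonicity on the $w$-side to match strict monotonicity on the $w^{\mystar}$-side). Everything else is a one-line deduction from the chain of identities.
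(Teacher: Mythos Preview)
Your approach is exactly the paper's: set up a length-preserving bijection between strictly decreasing subsequences of $w$ lying in a \emph{position}-interval $[p,q]$ and strictly decreasing subsequences of $w^{\mystar}$ with \emph{values} in $[p,q]$, and deduce the lemma immediately. The problem is that you bungle the bookkeeping at the key step and then spend the rest of the proposal trying to repair something that was never broken.

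Concretely, take $w_{i_1}>w_{i_2}>\cdots>w_{i_\ell}$ with $p\le i_1<\cdots<i_\ell\le q$. In $A^{\mystar}$ the corresponding columns are $(w_{i_t},i_t)$; listed in increasing order of the top entry (which is how $A^{\mystar}$ is sorted) they appear as $(w_{i_\ell},i_\ell),(w_{i_{\ell-1}},i_{\ell-1}),\dots,(w_{i_1},i_1)$. The bottom entries read in that order are $i_\ell>i_{\ell-1}>\cdots>i_1$, a \emph{strictly decreasing} subsequence of $w^{\mystar}$ with values in $[p,q]$. You wrote ``strictly increasing'' here, and that single slip cascaded into the later confusion. (A quick sanity check with the paper's own example $w=4113252$, $w^{\mystar}=2357416$ shows $\lds(w_1\cdots w_4)=\lds(4113)=2$ and, restricting $w^{\mystar}$ to values in $[1,4]$, $\lds(2341)=2$; meanwhile $\lis(2341)=3\neq 2$, so your claimed $\lds\leftrightarrow\lis$ pairing is false.)

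Once the direction is corrected, the lemma is immediate: $\lds(w_p\cdots w_q)=\lds\big((w^{\mystar})'\big)$ for every interval $[p,q]$, hence $u\sim_{cd} w \iff u^{\mystar}\sim_{rd} w^{\mystar}$, with no need for complements, reversals, or the fact that $w^{\mystar}$ is a permutation. (That last fact is, however, what makes the converse bijection work cleanly: given a strictly decreasing subsequence of $w^{\mystar}$, the strict decrease of the bottom entries together with the lex order on columns forces the top entries to be \emph{strictly} increasing, so the preimage in $w$ is genuinely strictly decreasing.) Your proposed symmetry patch via complementation would not work in general, because $\mystar$ does not commute with complementation of values when $w$ has repeats.
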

 
 \begin{proof}
Fix an interval $[i,j]$.  Observe that we have a length-preserving correspondence between the decreasing sequences of $u_i\cdots u_j$ and the decreasing sequences of
$u^\mystar$ with values in $[i,j]$,
given by the map
$$u_{x_1}> \cdots > u_{x_k} \mapsto x_k>\cdots > x_1.$$
Note that this correspondence relies on the fact that in any 2-line array a strictly decreasing sequence is, by definition, indexed by a strictly increasing sequence. The lemma is now a consequence of this fact.  
 \end{proof}

 \begin{lemma}\label{lem:reading words}
 For every word $w$,
 $$w \equiv_i \Read(\cP(w))\myand w^\mystar \equiv_i \Read(\cQ(w)).$$
 \end{lemma}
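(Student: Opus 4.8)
The plan is to establish the two assertions separately, the first, $w\equiv_i\Read(\cP(w))$, being the substantive one. Throughout I use the evident fact that $\equiv_i$ is a congruence for concatenation — if $x\equiv_i y$ then $uxv\equiv_i uyv$ for all words $u,v$ — since each defining move operates on a bounded window.

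The key lemma is that for every semistandard Young tableau $T$ and every weakly increasing word $\beta$ one has
$$\Read(T)\cdot\beta\;\equiv_i\;\Read(T\twoheadleftarrow\beta),$$
where $\twoheadleftarrow$ denotes the evident extension of i-insertion from a single letter to a weakly increasing word of inputs. Granting this for a single letter $\beta=a$, the first assertion follows by induction on $|w|$: by induction $\Read(\cP(w_1\cdots w_{n-1}))\equiv_i w_1\cdots w_{n-1}$, so congruence gives $w\equiv_i\Read(\cP(w_1\cdots w_{n-1}))\cdot w_n$, and the key lemma rewrites the right-hand side as $\Read\bigl(\cP(w_1\cdots w_{n-1})\twoheadleftarrow w_n\bigr)=\Read(\cP(w))$.

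I would prove the key lemma by induction on the number of rows of $T$ (the empty case being trivial). Write $\Read(T)=\Read(T')\cdot r_1$, where $r_1$ is the first row of $T$ and $T'$ is the tableau formed by the remaining rows. By definition, computing $T\twoheadleftarrow\beta$ inserts $\beta$ into $r_1$, producing the new first row $p_1$ and the (weakly increasing) word $B_1$ of bumped entries, and then applies the word-version of i-insertion to $T'$ with input a prescribed inflation $\alpha_1^{+}$ of the set $S_1$ of \emph{distinct} bumped values; thus $\Read(T\twoheadleftarrow\beta)=\Read(T'\twoheadleftarrow\alpha_1^{+})\cdot p_1$. Now chain the equivalences $r_1\cdot\beta\equiv (B_1)\cdot p_1$ — the classical single-row insertion identity applied letter by letter, a product of ordinary Knuth moves — then $B_1\equiv_i\alpha_1^{+}$, since both are weakly increasing words with underlying set $S_1$ and hence both inflations of $S_1$, and finally $\Read(T')\cdot\alpha_1^{+}\equiv_i\Read(T'\twoheadleftarrow\alpha_1^{+})$ by the inductive hypothesis; prepending $\Read(T')$ and using congruence assembles these into the lemma. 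The main obstacle is the bookkeeping here: one must check that the alignment conditions in the definition of $\twoheadleftarrow$ determine only the \emph{multiplicities} appearing in the words $\alpha_i^{+}$, so that each $\alpha_i^{+}$ is pinned down only within its $\equiv_i$-class, and one must set up the recursion so that ``i-insert a weakly increasing word into a tableau with one fewer row'' is exactly the right recursive object (this also subsumes the single-letter case, where the very first step replaces $a$ by the run $a^{m_0}$, which is an inflation).

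For the second assertion, Fact~\ref{fact:inverse rsk} gives $Q(w)=P(w^{\mystar})$, so $\Read(Q(w))\equiv w^{\mystar}$ (the reading word of an insertion tableau is Knuth equivalent to the word), and in particular $\Read(Q(w))\equiv_i w^{\mystar}$. It therefore suffices to prove $\Read(\cQ(w))\equiv_i\Read(Q(w))$. One first checks that each step of i-insertion enlarges the current shape by a horizontal strip — in particular the first row grows by exactly one box at every step, so the first row of $\cQ(w)$ is always $1\,2\cdots n$ — which makes $\cQ(w)$ a genuine semistandard tableau. Running ordinary $\RSK$ and iRSK on $w$ in parallel, the iRSK shape sequence is obtained from the $\RSK$ one by adding, at each step, exactly the horizontal-strip discrepancy between the bumped multisets and the inflations $\alpha_i^{+}$ isolated above; reading off the two recording tableaux then exhibits $\Read(\cQ(w))$ as obtained from $\Read(Q(w))$ by the same interleaving of inflations and Knuth moves, so the two are $\equiv_i$. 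This step is routine once the key lemma and its proof are in hand.
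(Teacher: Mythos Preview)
Your argument for the first equivalence $w\equiv_i\Read(\cP(w))$ is correct and is essentially the paper's argument repackaged. The paper peels off one row at a time via the map $f(A)=(B,p,q)$ on $2$-line arrays, using the classical fact $\alpha\equiv\Read(P(\alpha))$ to pass from the word of $A$ to $\beta\cdot p$, and then an inflation to replace $\beta$ by the next bumped array; your row-induction does exactly this with the $2$-line-array bookkeeping suppressed. (Your remark about the initial replacement of $a$ by $a^{m_0}$ being ``just an inflation'' is the right patch for the mismatch between the single-letter definition of $\twoheadleftarrow$ and your extended one.)

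The second equivalence, however, is not proved. You correctly note that $Q(w)=P(w^{\mystar})$, hence $w^{\mystar}\equiv\Read(Q(w))$, and then assert $\Read(\cQ(w))\equiv_i\Read(Q(w))$ on the grounds that the iRSK shape sequence differs from the RSK one by ``horizontal-strip discrepancies'' which translate into ``the same interleaving of inflations and Knuth moves'' on the reading words. This step does not go through: a containment of shape sequences does not yield an $\equiv_i$-equivalence of the reading words of the recording tableaux. Already for $w=321$ one has $\Read(Q(w))=321$ and $\Read(\cQ(w))=32123$; the two are indeed $\equiv_i$, but the chain of moves (for instance $321\to 3321\to 3231\to 3213\to 32213\to 32123$) is not visible from any row-by-row or shape-by-shape comparison of the two recording tableaux, and the rows of $\cQ(w)$ are not inflations of the rows of $Q(w)$. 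Nor can you simply dualize the first argument, since iRSK lacks the symmetry of RSK: in general $\cP(w^{\mystar})\neq\cQ(w)$.

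The paper's device is precisely to keep the $2$-line arrays that you discarded. With $f(A)=(B,p,q)$ defined via $B=\RSK^{-1}(\widehat{P(A)},\widehat{Q(A)})$, the RSK symmetry $\RSK(A^{\mystar})=(Q(A),P(A))$ gives $f(A^{\mystar})=(B^{\mystar},q,p)$ immediately. Setting $C_j:=A_j^{\mystar}$ and $C_j^+:=(A_j^+)^{\mystar}$ (which still differ in multiplicity, since $\mystar$ preserves that), the identical chain of equivalences used for the first claim now starts at $w^{\mystar}$ (the value word of $C_0$) and terminates at $q_k\cdots q_1=\Read(\cQ(w))$. The missing ingredient in your proposal is exactly this use of RSK symmetry at the level of each bumped array; without it, the reduction to $\Read(\cQ(w))\equiv_i\Read(Q(w))$ is a restatement of the goal rather than a proof.
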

 
We postpone the proof of this result since it is a bit involved and requires new ideas.  Instead we turn to the main theorems of the section.
 
 \begin{thm}\label{thm:Q implies cd}
 	Let $u,w\in \fW_n$.  If $\cQ(u) = \cQ(w)$ then $u \sim_{cd} w$.
 \end{thm}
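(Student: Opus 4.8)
The plan is to obtain the theorem as a direct formal consequence of the three lemmas just stated — Lemma~\ref{lem:reading words}, Lemma~\ref{lem: rd}, and Lemma~\ref{lem:rd,cd} — so that all of the genuine work is packaged into their proofs (and in particular into the deferred proof of Lemma~\ref{lem:reading words}). The idea is that $\cQ(w)$ should play the role of the classical recording tableau, whose equality controls the $\sim_{cd}$ relation, and the passage through the starred word $w^\mystar$ is precisely the device that converts ``column'' decreasing data into ``row'' decreasing data.

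Concretely, first I would note that the hypothesis $\cQ(u)=\cQ(w)$ immediately yields $\Read(\cQ(u))=\Read(\cQ(w))$, since the reading word is a function of the tableau. Then I would invoke the second assertion of Lemma~\ref{lem:reading words}, namely $u^\mystar\equiv_i\Read(\cQ(u))$ and $w^\mystar\equiv_i\Read(\cQ(w))$; combining these with the equality of reading words and transitivity of $\equiv_i$ gives $u^\mystar\equiv_i w^\mystar$. Applying Lemma~\ref{lem: rd} to the i-Knuth equivalent words $u^\mystar$ and $w^\mystar$ yields $u^\mystar\sim_{rd}w^\mystar$. Finally, Lemma~\ref{lem:rd,cd} translates this statement about the starred words back to the original words: $u^\mystar\sim_{rd}w^\mystar$ is equivalent to $u\sim_{cd}w$, which is exactly the conclusion.

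The step I expect to be the real obstacle is not in this theorem — the chain above is essentially two lines — but in the deferred Lemma~\ref{lem:reading words}, and especially in the claim $w^\mystar\equiv_i\Read(\cQ(w))$. Proving it should require unwinding the recursive definition of i-insertion, identifying the $A^\mystar$ construction with the ``inverse'' operation on $2$-line arrays so that Fact~\ref{fact:inverse rsk} becomes available, and then verifying that the repeated-letter inflations built into each stage of i-insertion are precisely accounted for by the inflation moves of i-Knuth equivalence, while the remaining moves reduce to ordinary Knuth moves. That is where the new combinatorial ideas enter; granting Lemma~\ref{lem:reading words}, Theorem~\ref{thm:Q implies cd} follows immediately by the argument above.
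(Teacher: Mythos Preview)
Your proposal is correct and follows essentially the same argument as the paper: invoke Lemma~\ref{lem:reading words} to get $u^\mystar\equiv_i\Read(\cQ(u))=\Read(\cQ(w))\equiv_i w^\mystar$, apply Lemma~\ref{lem: rd} to obtain $u^\mystar\sim_{rd}w^\mystar$, and conclude $u\sim_{cd}w$ via Lemma~\ref{lem:rd,cd}. Your assessment that the substantive work lies in the deferred proof of Lemma~\ref{lem:reading words} is also accurate.
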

 \begin{proof} 
 By Lemma~\ref{lem:reading words} we have
$$u^\mystar \equiv_i \Read(\cQ(u))\myand w^\mystar \equiv_i \Read(\cQ(w)).$$
Since $\cQ(u) = \cQ(w)$ we have $u^\mystar \equiv_i w^\mystar$, so by Lemma~\ref{lem: rd} we have  $u^\mystar \sim_{rd} w^\mystar$.  Lemma~\ref{lem:rd,cd} now implies $u \sim_{cd} w$, as desired.
 \end{proof}

\begin{remark}
In light of this theorem, a natural question is whether $\cQ$ captures CID data.  Unfortunately, it does not, as evidenced by the words $11$ and $u=12$ which are not CID equivalent but do have the same $\cQ$ tableau.  
\end{remark} 

We are now in position to prove Theorem~\ref{thm:M implies cid}.
\begin{proof}[Proof of Theorem \ref{thm:M implies cid}]  
If $\cM(u)=\cM(w)$ then by Lemma~\ref{lem:R -> Q} and Theorem~\ref{thm:Q implies cd} we have $u\sim_{cd} w$.

Now consider the reverse words $u^r$ and $w^r$.  By taking transposes we have $\cM(u^r)=\cM(w^r)$ and so $u^r \sim_{cd} w^r$ by the above argument.  Equivalently, we have  $u \sim_{ci} w$. Combining this with the result of the preceding paragraph gives $u\sim_{cid}w$ as needed.  
\end{proof}

 \subsubsection{Proof of Lemma~\ref{lem:reading words}}
 
To prove this lemma we make use of an alternative description of the iRSK algorithm, in terms of 2-line arrays.

\begin{definition}
For any 2-line array $A$ define the triple
 	$$f(A)= (B, p, q),$$
 	where $p$ and $q$ are the top rows of $P(A)$ and $Q(A)$, respectively, and $B = \RSK^{-1}( \widehat{P(A)}, \widehat{Q(A)})$ where $\widehat{T}$ denotes the tableau obtained from a tableau $T$ by deleting the top row.   	
\end{definition}

\begin{definition}
    Let $w\in \fW_n$ and consider the construction of the i-insertion tableau $\cP(w)$.  For each  $i> 0$ and $j\leq n,$ let $A_{ij}$ be the 2-line array whose values are given by the sequence of elements bumped from row $i$ during the insertion
    $$\cP(w_1\cdots w_{j-1})\leftarrow w_j$$
    and whose indices are $j$.  We then define 
    $$A_i = A_{i1}\cdots A_{in}$$
    and refer to this array as the \emph{$i$th bumped array for $w$}.  We  define $A_0$ to be the 2-line array given by the word $w$.  
    
    We make the same definition with $\RSK$ in place of $\iRSK$.  Context will make it clear which variant we are referencing.  
\end{definition}
For example, take the word $w = 5143642$ with $\RSK(w)$ given by
$$\ytableaushort{124,36,4,5} \qquad \ytableaushort{135,26,4,7}\ .$$
A simple check shows that the bumped arrays in this case are 
\renewcommand{\arraystretch}{1}
$$A_1 = \left(\begin{array}{ccccc}
	2 & 4 &6&7\\
	5&4&6&3
\end{array}\right)\qquad A_2 = \left(\begin{array}{cc}
	4&7\\
	5&4
\end{array}\right)\qquad A_3 = \left(\begin{array}{ccccc}
	7\\
	5
\end{array}\right).$$
Observe that the insertion and recording tableaux for $A_i$ are precisely the tableaux obtained by deleting the first $i$ rows of $P$ and $Q$, respectively.  For example $\RSK(A_1)$ gives 
$$\ytableaushort{36,4,5}\qquad \ytableaushort{26,4,7}\ .$$

To state our next result we shall need the idea of two 2-line arrays $A$ and $B$ \emph{differing in multiplicity}.  By this we simply mean that  $A$ can be obtained from $B$ by a finite number of the following moves:
\renewcommand{\arraystretch}{1}
	$$\left(\begin{array}{ccc} 
	\cdots & a&\cdots  \\ 
	\cdots & b&\cdots
	\end{array}\right)  \qquad \leftrightarrow \qquad 
	\left(\begin{array}{cccc} 
	\cdots & a & a&\cdots  \\ 
	\cdots & b & b&\cdots\end{array}\right).$$  
Note that if $A$ and $B$ differ in multiplicty then the words given by their values are i-Knuth equivalent.

\begin{lemma}\label{lem:exist A's}
Fix a word $w$ of length $n$. Let $A_0,\ldots, A_k$ be the  bumped arrays  from the construction of $\cP(w)$ and let the $i$th rows of $\cP(w)$ and $\cQ(w)$ be $p_i$ and $q_i$, respectively.  Then there exist 2-line arrays $A_0^+,\ldots, A_k^+$ such that $A_i$ and $A_i^+$ differ in multiplicity and 
 $$f(A_i^+) = (A_{i+1}, p_{i+1},q_{i+1}).$$
     	
\end{lemma}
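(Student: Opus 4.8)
The plan is to proceed by induction on $i$, tracking the relationship between the classical RSK bumping process and the inflated i-insertion process simultaneously. The base case $i=0$ is immediate: $A_0$ is the 2-line array given by $w$, and we set $A_0^+ := A_0$, noting that $f(A_0) = (B,p_1,q_1)$ where $p_1,q_1$ are the top rows of $P(w) = \cP(w)$'s... wait, here one must be careful, since $\cP(w) \neq P(w)$ in general. The correct approach is instead to track the \emph{first row} of the i-insertion tableau and relate it to what classical RSK does on an appropriately inflated word. So the first step is: given the bumped array $A_i$ from the i-insertion construction, I would produce $A_i^+$ by inflating certain columns of $A_i$ so that, when one runs \emph{classical} RSK on the word of values of $A_i^+$, the first row of the resulting insertion tableau equals $p_{i+1}$ (the $(i+1)$st row of $\cP(w)$) and the bumped array out of that first row equals $A_{i+1}$. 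This is exactly the content of comparing the definition of $P' \twoheadleftarrow a$ (i-insertion) with repeated application of $p \dashleftarrow a$ (single-row insertion): the inflation $\alpha_i^+$ in the definition of i-insertion is precisely the inflation of the bumped word that makes column alignments work, and $p_{i+1} = p'_{i+1} \dashleftarrow \alpha_i^+$ is one row of a classical RSK insertion.

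The key steps, in order: (1) Establish the single-row comparison lemma — that i-inserting into the first row, i.e.\ forming $p_1 = p'_1 \dashleftarrow \alpha_0^+$ and recording the bumped set $S_1$, is the same (up to multiplicity, and after recording indices) as classically RSK-inserting the values of $A_0^+$ into an empty tableau and reading off the first row of $P$ and the array bumped out of the first row. (2) Show that the recording-tableau top row $q_{i+1}$ is correctly produced: the index $j$ appears in $\lambda^{(j)}/\lambda^{(j-1)}$, and by Lemma~\ref{lem:P = W}/the shape analysis, the new box in row $i+1$ at step $j$ corresponds exactly to the index recorded in $q_{i+1}$; this must be matched against the $Q$-tableau of $A_i^+$ under classical RSK. (3) Verify that $B = \RSK^{-1}(\widehat{P(A_i^+)}, \widehat{Q(A_i^+)})$ — the array obtained by deleting the top row — equals $A_{i+1}$, the next bumped array; this is where one uses that deleting the first row of a classical RSK output corresponds to the array of elements bumped out of the first row, a standard fact about RSK. (4) Assemble: $f(A_i^+) = (A_{i+1}, p_{i+1}, q_{i+1})$, and $A_i \sim A_i^+$ in multiplicity because the inflations used to build $\alpha_i^+$ are exactly column-doubling moves on the 2-line array.

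The main obstacle I expect is Step (1) combined with getting the \emph{indices} right in Step (2)–(3): the i-insertion process inflates bumped \emph{values} to align columns, but when we promote to 2-line arrays we must simultaneously decide how to inflate the \emph{index} row so that the resulting array is a legitimate 2-line array (columns lexicographically ordered) and so that $A_i^+$ genuinely differs from $A_i$ only in multiplicity (i.e.\ only by doubling entire columns, not by anything more drastic). One has to check that the column at index $j$ in $A_i$, which carries the word of values bumped from row $i$ during the insertion of $w_j$, gets inflated consistently with the inflation $\alpha_i^+$ prescribed in the definition of i-insertion, and that this inflation never forces a reordering of columns — which holds because all inflations at step $j$ keep the index fixed at $j$ and the values weakly increase within that column (they are the entries of a single row being bumped left-to-right). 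Once the bookkeeping of indices is pinned down, the value-side comparison is just the observation that $p \dashleftarrow \alpha_i^+$ is one row of classical RSK insertion and that classical RSK's ``peel off the top row'' operation is inverse to ``collect what the top row bumps,'' which are both standard (see \cite{Fulton1996}, Chapter~4). I would therefore devote most of the written proof to carefully setting up $A_i^+$ and checking the multiplicity claim, and treat the value-side RSK facts as known.
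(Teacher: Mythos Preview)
Your approach is essentially the same as the paper's, but you are overcomplicating it. The paper's proof is a single short paragraph, because the lemma is almost definitional: the i-insertion process is \emph{literally} defined as ``take the set $S_i$ bumped from row $i$, inflate it to the word $\alpha_i^+$, then perform classical single-row insertion $p'_{i+1}\dashleftarrow \alpha_i^+$.'' So one simply takes $A_i^+$ to be the $2$-line array whose values at index $j$ are the letters of $\alpha_{i,j}^+$ (the inflation used at step $j$). Then by construction, running classical RSK on $A_i^+$ builds exactly the row $p_{i+1}$ as its first row, records exactly $q_{i+1}$ as the first row of its $Q$-tableau, and bumps out exactly $A_{i+1}$ from that first row; the standard RSK fact that the first bumped array has $(\widehat P,\widehat Q)$ as its RSK image finishes the argument.

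In particular, no induction on $i$ is needed: the statement for each $i$ stands on its own, since the arrays $A_i$ are already determined by the iRSK algorithm. Your worry about index bookkeeping is legitimate but minor. At step $j$, both the bumped sequence $A_{ij}$ and the inflation $\alpha_{i,j}^+$ have the same underlying \emph{set} of values (namely $S_i$), so $A_i$ and $A_i^+$ differ only by duplicating or deleting repeated columns at each fixed index $j$, which is precisely ``differing in multiplicity'' and trivially preserves the lexicographic order. There is no need to separate the argument into four steps or to invoke Lemma~\ref{lem:P = W}; everything you need is contained in the definition of $\twoheadleftarrow$ and the one standard RSK fact you cite.
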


\begin{proof}
By definition of $\iRSK$ there exist arrays $A_i^+$ which differ in multiplicity from $A_i$ so that if 
$$\RSK(A_i^+) = (P,Q)$$
then $p_{i+1}$ and $q_{i+1}$ are the top rows of $P$ and $Q$, respectively, and the first bumped array from the construction of $P(A_i^+)$ is $A_{i+1}$.  In other words,
$$\RSK(A_{i+1}) = (\widehat P,\widehat Q)$$
and so
$f(A_i^+) = (A_{i+1},p_{i+1}, q_{i+1})$ as needed.   
\end{proof}

To prove our next lemma we make use of the following well-known fact.  A proof of this can be found in Proposition 1 of Chapter 2 in \cite{Fulton1996}.  

\begin{fact}\label{fact: w equiv reading(P)}
For any word $w$, we have $w\equiv \Read(P(w))$.
\end{fact}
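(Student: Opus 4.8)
The plan is to deduce the statement from the classical structure of Schensted insertion, but without reproving everything from scratch: I would reduce it to one clean combinatorial identity and then verify that identity by a direct bumping analysis. Concretely, by Fact~\ref{fact:Knuth implies P} it is enough to show that $w$ and $\Read(P(w))$ have the same insertion tableau, i.e.\ that $P(\Read(P(w))) = P(w)$. This is the special case $T = P(w)$ of the general identity $P(\Read(T)) = T$, valid for every semistandard Young tableau $T$; here I use only the elementary property that, for words $u$ and $v$, the tableau $P(uv)$ is obtained from $P(u)$ by row-inserting the letters of $v$ one after another.

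So the first step is to prove $P(\Read(T)) = T$ by induction on the number of rows of $T$. When $T$ has at most one row, $\Read(T)$ is its single weakly increasing row, and row-inserting a weakly increasing word into the empty tableau merely appends its letters in order, returning that row. For the inductive step, let $r$ be the top row of $T$ and let $T^{\downarrow}$ be $T$ with $r$ deleted and the remaining rows shifted up. Since $\Read$ reads rows from the bottom upward, $\Read(T) = \Read(T^{\downarrow})\,r$, and hence $P(\Read(T))$ is the result of row-inserting the word $r$ into $P(\Read(T^{\downarrow}))$, which equals $T^{\downarrow}$ by the inductive hypothesis. Thus the inductive step reduces to showing that row-inserting the word $r$ into $T^{\downarrow}$ produces $T$.

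The hard part will be this last reduction, which I would handle by a careful bumping analysis. Writing $r = a_1 \le a_2 \le \cdots \le a_m$, I would insert $a_1, \dots, a_m$ one at a time and maintain, by induction on the number of letters already inserted, the invariant that after inserting $a_1, \dots, a_i$ the current tableau is $T^{\downarrow}$ with $a_j$ prepended to the top of column $j$ for each $j \le i$ and nothing else changed. The point is that the insertion of $a_i$ triggers a purely vertical bumping path in column $i$: at that stage the top row consists of $a_1 \le \cdots \le a_{i-1}$ followed by the still-untouched tail of the old top row of $T^{\downarrow}$, so, because $a_i \ge a_{i-1}$ while column-strictness of $T$ forces $a_i$ to be strictly smaller than the entry of $T^{\downarrow}$ currently occupying column $i$ of that row, $a_i$ bumps exactly that entry; the same comparison (weak increase along the relevant row of $T^{\downarrow}$ to rule out positions to the left, strict increase down its columns to force the hit) applied one row down shows the bumped value travels straight down column $i$ of $T^{\downarrow}$, the final bumped value being appended at the foot of that column (or $a_i$ simply being appended if $T^{\downarrow}$ has no column $i$). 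After all $m$ steps, column $j$ of the output is $a_j$ sitting atop the original column $j$ of $T^{\downarrow}$, and no column of $T^{\downarrow}$ has index exceeding $m$ because the row lengths of $T$ are weakly decreasing; this is precisely column $j$ of $T$, so the output is $T$. The delicate bookkeeping is in checking that every bumping path stays inside a single column and that each intermediate array remains semistandard, and this is exactly where the three structural features of $T$ enter: weak increase along $r$, strict increase down columns, and the nesting of row lengths. Combining the three steps gives $P(\Read(P(w))) = P(w)$, so Fact~\ref{fact:Knuth implies P} yields $w \equiv \Read(P(w))$.
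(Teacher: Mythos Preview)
Your bumping analysis is correct: the invariant that after inserting $a_1,\dots,a_{i}$ into $T^{\downarrow}$ each column $j\le i$ consists of $a_j$ atop the original column $j$ of $T^{\downarrow}$ is exactly right, and the verification that each bumping path stays in a single column goes through as you describe. This cleanly establishes $P(\Read(T))=T$ for every semistandard $T$. The paper itself offers no argument here at all --- it simply cites Proposition~1 of Chapter~2 in Fulton --- so you have supplied strictly more than the paper does.

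One logical caveat is worth flagging. Your reduction invokes Fact~\ref{fact:Knuth implies P}, specifically the direction $P(u)=P(w)\Rightarrow u\equiv w$. In the standard textbook developments (including Sagan's, which the paper cites for Fact~\ref{fact:Knuth implies P}), that very implication is proved \emph{via} the statement you are establishing: one argues $u\equiv\Read(P(u))=\Read(P(w))\equiv w$. So as a standalone development your argument would be circular. Within the paper this is harmless, since both facts are simply imported from the literature as black boxes; but if you want a self-contained route, the usual direct proof shows by induction on $|w|$ that $\Read(T\leftarrow a)\equiv \Read(T)\cdot a$ for any semistandard $T$ and letter $a$, tracking Knuth moves through a single row insertion. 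Your identity $P(\Read(T))=T$ is the companion statement (and indeed its proof uses the same column-by-column bumping picture), but it does not by itself yield $w\equiv\Read(P(w))$ without the appeal to Fact~\ref{fact:Knuth implies P}.
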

Since Knuth equivalence implies i-Knuth equivalence, it follows immediately that $w\equiv_i \Read(P(w))$.  We make use of this fact in this form below.

\begin{lemma}\label{lem:f and K-Knuth}
	Let $A$ be a 2-line array and set $f(A) = (B,p,q)$. Denote by $\alpha$ and $\beta$ the words given by the values of $A$ and $B$, respectively.  Then we have
\begin{equation}\label{eq:word of A}
\alpha \equiv_i \beta\cdot p
\end{equation}
and 
\begin{equation}\label{eq:inverse of A}
	f(A^{\mystar}) = (B^{\mystar}, q,p).	
\end{equation}
\end{lemma}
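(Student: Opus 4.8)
The plan is to dispatch \eqref{eq:inverse of A} first, since it should follow purely formally from Fact~\ref{fact:inverse rsk}, and then to derive \eqref{eq:word of A} by passing to reading words of insertion tableaux.

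For \eqref{eq:inverse of A}, write $f(A) = (B,p,q)$, so that $B = \RSK^{-1}(\widehat{P(A)},\widehat{Q(A)})$ and $p,q$ are the top rows of $P(A)$ and $Q(A)$. I would first apply Fact~\ref{fact:inverse rsk} to $A$, getting $\RSK(A^{\mystar}) = (Q(A),P(A))$; deleting top rows then identifies $\widehat{P(A^{\mystar})} = \widehat{Q(A)}$ and $\widehat{Q(A^{\mystar})} = \widehat{P(A)}$, and shows the top rows of $P(A^{\mystar}),Q(A^{\mystar})$ are $q,p$ respectively. Applying Fact~\ref{fact:inverse rsk} a second time, now to $B$ (using $\RSK(B) = (\widehat{P(A)},\widehat{Q(A)})$), gives $\RSK(B^{\mystar}) = (\widehat{Q(A)},\widehat{P(A)}) = (\widehat{P(A^{\mystar})},\widehat{Q(A^{\mystar})})$, so $B^{\mystar} = \RSK^{-1}(\widehat{P(A^{\mystar})},\widehat{Q(A^{\mystar})})$ is exactly the array produced by applying $f$ to $A^{\mystar}$. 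Hence $f(A^{\mystar}) = (B^{\mystar},q,p)$.

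For \eqref{eq:word of A}, the three ingredients I would assemble are: (i) the RSK insertion tableau of a 2-line array depends only on its bottom (value) row, so $P(A) = P(\alpha)$ and $P(B) = P(\beta)$; (ii) since $B = \RSK^{-1}(\widehat{P(A)},\widehat{Q(A)})$ we get $P(\beta) = P(B) = \widehat{P(A)}$; and (iii) because the reading word is read from the bottom row upward and $p$ is the top row of $P(A)$, we have $\Read(P(A)) = \Read(\widehat{P(A)})\cdot p$. Then Fact~\ref{fact: w equiv reading(P)} gives $\alpha \equiv \Read(P(A))$ and $\beta \equiv \Read(P(\beta)) = \Read(\widehat{P(A)})$. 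Since Knuth equivalence is preserved under concatenation on the right, $\beta\cdot p \equiv \Read(\widehat{P(A)})\cdot p = \Read(P(A)) \equiv \alpha$; in fact this yields the stronger statement $\alpha\equiv\beta\cdot p$, and since Knuth equivalence implies i-Knuth equivalence we obtain $\alpha \equiv_i \beta\cdot p$, which is \eqref{eq:word of A}.

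I do not expect a deep obstacle; the work is almost entirely bookkeeping. The two points that need a word of justification are observation (i) — that RSK insertion of a 2-line array ignores the index row, so $\widehat{P(A)}$ genuinely is the insertion tableau of the word $\beta$ — and the standard local fact that appending a fixed word on the right respects Knuth equivalence. The only thing to stay careful about is tracking which of $P$ and $Q$ lands in which slot when Fact~\ref{fact:inverse rsk} is invoked twice in part \eqref{eq:inverse of A}.
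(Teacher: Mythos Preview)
Your proposal is correct and follows essentially the same route as the paper: both parts are proved by invoking Fact~\ref{fact:inverse rsk} (twice, on $A$ and on $B$) for \eqref{eq:inverse of A}, and by combining Fact~\ref{fact: w equiv reading(P)} with the decomposition $\Read(P(A)) = \Read(\widehat{P(A)})\cdot p$ for \eqref{eq:word of A}. Your write-up is a bit more explicit about the background facts (that $P$ depends only on the value row and that concatenation respects Knuth equivalence), and you correctly note that the argument in fact yields the stronger $\alpha \equiv \beta\cdot p$.
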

\begin{proof}
	Let $\RSK(A)=(P,Q)$ so that $P(\alpha) = P$. So $\alpha \equiv_i \Read(P)$.    By definition of $B$ we also know that $P(\beta) = \widehat P$ and so  $\beta \equiv_i \Read(\widehat P)$.    Consequently,
$$\alpha \equiv_i \Read(P) = \Read(\hat{P})\cdot p \equiv_i \beta \cdot p,$$
which proves the first claim.  The second claim follows immediately by applying Fact~\ref{fact:inverse rsk}, as this shows that $\RSK(A^{\mystar}) = (Q,P)$ and $\RSK(B^{\mystar}) = (\widehat{Q},\widehat{P})$.  
\end{proof}

\begin{proof}[Proof of Lemma~\ref{lem:reading words}]
 \sloppy To prove the first claim let $A_0,\ldots, A_k$ and $A_0^+,\ldots, A_k^+$ be as in Lemma~\ref{lem:exist A's} and for $0\leq j\leq k$ let $\alpha_j$ and $\alpha_j^+$ be the words obtained from the values of $A_j$ and $A_j^+$, respectively. So 
 $$f(A_j^+) = (A_{j+1},p_{j+1},q_{j+1}),$$
 where $p_j$ and $q_j$ are the $j$th rows of $\cP(w)$ and $\cQ(w)$, respectively.  Furthermore, $\alpha_j$ and $\alpha_j^+$ differ in multiplicity and so $\alpha_j \equiv_i \alpha_j^+$.
 By repeated applications of \eqref{eq:word of A} in Lemma~\ref{lem:f and K-Knuth} we have
 $$w \equiv_i \alpha_0^+ \equiv_i \alpha_1p_1\equiv_i \alpha_1^+p_1\equiv_i \alpha_2p_2p_1\equiv_i \cdots \equiv_i p_k\cdots p_1 = \Read(\cP(w)).$$
 
 To prove the second claim set $C_j = A_j^{\mystar}$ and $C_j^+ = (A_j^+)^{\mystar}$.  By Equation~\eqref{eq:inverse of A} in Lemma~\ref{lem:f and K-Knuth} we have 
 $$f(C_j^+) = (C_{j+1}, q_{j+1},p_{j+1}).$$
Since $w^\mystar$ is the word given by the values in $C_0= A_0^{\mystar}$,  an argument analogous to that given for the first claim proves the second claim. 
\end{proof}

\section{The conjecture of Guo and Poznanovi\'{c}}\label{sec:proof of conjecture}

The purpose of this section is to prove the conjecture of Guo and Poznanovi\'{c} (\cite{GuoPoznanovik2020}, Conjecture 15), concerning the lengths of the longest ne and se chains in certain 01-fillings of moon polyominoes.  For the convenience of the reader, we recall that $N(M;n;ne=u,se=v)$ denotes the number of 01-fillings of $M$ with exactly $n$ 1's such that each column contains at most one 1, and the lengths of the longest ne and se  chains in $M$ are $u$ and $v$, respectively.

\begin{proof}[Proof of Theorem~\ref{thm:moon}]
We will prove that, starting with any moon polyomino $M$, we can perform a sequence of transformations of $M$ that turn it first into a stack polyomino and then into a Ferrers board in French notation, preserving the multiset of row lengths and $N(M; n; ne = u; se = v)$ at each step.  Since a Ferrers board in French notation is completely determined by its multiset of row lengths, this will prove the theorem. 

To begin, let $M$ be a moon polyomino and let $c$ be the leftmost column of $M$.  Let $R$ be the largest rectangle in $M$ that contains column $c$ and let $M'$ be the moon polyomino obtained from $M$ by deleting $c$ and adding a new column $c'$ at the right end of $R$.  Let $R'$ be the rectangle in $M'$ that has the same size as $R$, so that $R'$ is ``$R$ shifted one unit to the right."  
\begin{center}
	\begin{tikzpicture}
	\node at (-1,1.5) {$M$:};
		\draw[fill=orange]  (0,1) rectangle (6,2);
		\draw (0,0) rectangle (6,3);
		\draw (0,1) rectangle (10,2);
		\draw (.75,0) -- (.75,3);
		\node at (3,1.5) {$X_1$};
		\node at (8,1.5) {$X_2$};
		\draw [thick,decorate,decoration={brace,amplitude=10pt,mirror},xshift=0.4pt,yshift=-4pt](0,0) -- (6,0) node[black,midway,yshift=-0.6cm] {$R$};
		
		\draw [thick,decorate,decoration={brace,amplitude=4pt},xshift=0pt,yshift=4pt](0,3) -- (.75,3) node[black,midway,yshift=0.3cm] {$c$};

		\begin{scope}[shift={(1,-5)}]
		\node at (-2,1.5) {$M'$:};
		\draw[fill=orange]  (0,1) rectangle (6,2);
		\draw (0,0) rectangle (6,3);
		\draw (0,1) rectangle (10,2);
		\draw (5.25,0) -- (5.25,3);

		\draw [thick,decorate,decoration={brace,amplitude=10pt,mirror},xshift=0.4pt,yshift=-4pt](0,0) -- (6,0) node[black,midway,yshift=-0.6cm] {$R'$};
		
		\draw [thick,decorate,decoration={brace,amplitude=4pt},xshift=0pt,yshift=4pt](5.25,3) -- (6,3) node[black,midway,yshift=0.3cm] {$c'$};
  \end{scope}
	\end{tikzpicture}
\end{center}
It is clear that $M$ and $M'$  have the same multiset of row lengths.  We will show that 
\begin{equation}\label{eq:ne,se}
N(\cM; n; ne = u, se = v) = N(\cM'; n; ne = u, se = v).	
\end{equation}

To each 01-filling of $\cM$ that has at most one 1 in each column we associate a corresponding filling of $\cM'$ as follows.  For boxes of $M$ outside $R$ we retain the given filling.  We modify the filling of $R$ to obtain a filling of $R'$ as follows.  If $c$ is empty we leave $c'$ empty and retain the given filling of the rest of $R$. If $c$ is not empty, we let $w$ be the word whose matrix representation is given by the nonempty rows and columns of the given filling of $R$ and we take the word $w'$ associated to $w$ by Theorem~\ref{thm:col shift}, recalling that the values that occur in $w'$ are precisely those that occur in $w$. We place the matrix representation of $w'$ in the columns of $R'$ that were not empty in $R$, and $c'$.  This leaves empty the rows of $R$ that were empty in the given filling of $M$.  

To prove \eqref{eq:ne,se} it suffices to show that for every 01-filling of $M$ with at most one 1 in each column, and any maximal rectangle $X$ of $M$, the length of the longest ne (respectively, se) chains in $X$ is the same as the length of the longest ne (respectively, se) chains in the corresponding maximal rectangle $X'$ of $M'$ under the associated filling of $M'$.  For $R$ and $R'$ this is clear, since the bijection of Theorem~\ref{thm:col shift} preserves the lengths of the longest increasing and decreasing sequences in a word.

Now suppose $X$ is a maximal rectangle wider than $R$, so that $X$ consists of a rectangle $X_1$ contained in $R$ with another rectangle $X_2$ of the same height adjoined to its righthand side, as shown above.

Any ne chain of maximal length $\ell$ in $X$ consists of an initial segment contained in some interval $I$ of the rows of $R$, and a terminal segment in $X_2$.  The initial segment must have maximal length among ne chains in the interval $I$ of rows of $R$, so by Theorem~\ref{thm:col shift} the interval $I$ of rows of $R'$ contains a ne chain of the same length.  This sequence, adjoined to the terminal segment in $X_2$, gives a ne chain of length $\ell$ in $X'$.  A similar argument applies to se chains. If we then repeat the argument, interchanging the roles of $X'$ and $X$, we conclude that the lengths of the longest ne and se chains in $X$ and $X'$ coincide.  

If $X$ is a maximal rectangle narrower than $R$, then a ne chain of maximal length $\ell$ in $X$ consists of initial and terminal segments outside $R$ and a middle segment contained in some interval $J$ of the columns of $R$. The middle segment must have maximal length among ne chains in the interval $J$ of columns of $R$, so by Theorem~\ref{thm:col shift} the interval $J$ of columns of $R'$ contains a ne chain of the same length. This chain, together with the initial and terminal segments outside $R$, gives us a ne chain of length $\ell$ in $X'$.  As in the preceding paragraph, we can now conclude that the lengths of the longest ne and se chains in $X$ and $X'$ coincide.  

By iterating the argument contained in the preceding five paragraphs, we can convert $M$ into a stack polyomino $M^\diamond$, preserving the multiset of row lengths and $N(M;n;ne=u,se=v)$.  To complete the proof, we can invoke the result of Guo and Poznanovi\'{c} (\cite{GuoPoznanovik2020}, Theorem 1), or we can convert the stack polyomino into a Ferrers board in French notation by doing an argument similar to the above, starting with the bottom row $r$ of $M^\diamond$ instead of the left column, taking the largest rectangle in $M^\diamond$ that contains $r$, and adding a row $r'$ to the top of this rectangle and deleting $r$.  We then use Theorem~\ref{thm:row shift} in place of Theorem~\ref{thm:col shift} to complete an argument similar to the above. (In this case, when we place the word $w'$ in the columns of $R'$ that were not empty in $R$, we must be careful to place the entries, from smallest to largest, into the rows of $R'$ that were not empty in $R$, and $r'$.)  
\end{proof}

\bibliographystyle{plain} 
\bibliography{mybib}
\end{document}